\theoremstyle{plain}
\newtheorem{theorem}{Theorem}[section]
\newtheorem{lemma}[theorem]{Lemma}
\newtheorem{problem}[theorem]{Problem}
\newtheorem{corollary}[theorem]{Corollary}
\theoremstyle{definition}
\newtheorem{definition}[theorem]{Definition}
\theoremstyle{remark}
\newtheorem{remark}[theorem]{Remark}
\icmltitlerunning{Approximate Frank-Wolfe Algorithms over Graph-structured Support Sets}
\newcommand{\argmin}[1]{\underset{#1}{\mathrm{argmin}}}
\newcommand{\argmax}[1]{\underset{#1}{\mathrm{argmax}}}
\newtheorem*{rep@theorem}{\rep@title}
\newcommand{\newreptheorem}[2]{%
\newenvironment{rep#1}[1]{%
 \def\rep@title{#2 \ref{##1}}%
 \begin{rep@theorem}}%
 {\end{rep@theorem}}}
\newcommand{\cmark}{\ding{51}}%
\newcommand{\xmark}{\ding{55}}%
\definecolor{ForestGreen}{RGB}{34,139,34}
\DeclarePairedDelimiter\ceil{\lceil}{\rceil}
\begin{document}

\twocolumn[
\icmltitle{Approximate Frank-Wolfe Algorithms over Graph-structured Support Sets}

\begin{icmlauthorlist}
\icmlauthor{Baojian Zhou}{fudan}
\icmlauthor{Yifan Sun}{stonybrook}
\end{icmlauthorlist}

\icmlaffiliation{fudan}{School of Data Science, Fudan University, Shanghai, China}
\icmlaffiliation{stonybrook}{Department of Computer Science, Stony Brook University, Stony Brook, New York, USA}

\icmlcorrespondingauthor{Baojian Zhou}{bjzhou@fudan.edu.cn}

\icmlkeywords{Frank-Wolfe, Graph-structured, Dual Projection}

\vskip 0.3in
]

\printAffiliationsAndNotice{} %

\begin{abstract}
In this paper, we consider approximate Frank-Wolfe (FW) algorithms to solve convex optimization problems over graph-structured support sets where the \textit{linear minimization oracle} (LMO) cannot be efficiently obtained in general. We first demonstrate that two popular approximation assumptions (\textit{additive} and \textit{multiplicative gap errors)} are not applicable in that no cheap gap-approximate LMO oracle exists. Thus, \textit{approximate dual maximization oracles} (DMO) are proposed, which approximate the inner product rather than the gap. We prove that the standard FW method using a $\delta$-approximate DMO converges as $\mathcal{O}((1-\delta) \sqrt{s}/\delta)$ in the worst case, and as $\mathcal{O}(L/(\delta^2 t))$ over a $\delta$-relaxation of the constraint set. Furthermore, when the solution is on the boundary, a variant of FW converges as $\mathcal{O}(1/t^2)$ under the quadratic growth assumption. Our empirical results suggest that even these improved bounds are pessimistic, showing fast convergence in recovering real-world images with graph-structured sparsity.
\end{abstract}

\section{Introduction}
\label{submission}
This paper deals with the following graph-structured convex optimization (GSCO) problem 
\begin{equation}
\min_{\bm x \in \mathbb{R}^d} f(\bm x), \text{ subject to } \bm x \in \mathcal{D}(C,\mathbb{M}), \label{def:problem}
\end{equation}
where $\mathcal{D}(C, \mathbb{M}) \triangleq \operatorname{conv}\left\{\bm x: \|\bm x \|_2 \leq C, {\rm supp}\left({\bm x}\right) \in \mathbb{M} \right\}$ is a convex hull of the graph-structured support set described by $\mathbb{M}$, which contains a collection of allowed structures of the problem, and $f$ is a convex differentiable function. The support of $\bm x$, i.e., $\operatorname{supp}(\bm x) \triangleq \{i: x_i \ne 0\}$ encodes the sparsity pattern of $\bm x$, which can be defined by interesting graph structures such as a path, tree, or cluster over an underlying graph. Models $\mathbb{M}$ describe many interesting scenarios where graph structures serve as a powerful prior. Important applications of these include generalized $s$-sparsity \citep{argyriou2012sparse,lim2017k}, structured-sparsity \citep{bach2012optimization,bach2012structured}, clustered-sparsity \citep{mcdonald2016new}, weighted graph models (WGM) \citep{hegde2015nearly}, graph LASSO \citep{sharpnack2012sparsistency,hallac2015network}, marginal polytope \citep{krishnan2015barrier}, and many others  \citep{baraniuk2010model}.

To solve the GSCO problem, a natural idea is to use projected gradient descent (PGD) where, a \textit{projection oracle} finds a point in $\mathcal{D}$ at per-iteration. PGD-based methods for sparse and structure optimization have been well explored \cite{bahmani2013greedy,jain2014iterative,yuan2014gradient,nguyen2017linear,hegde2015approximation,hegde2015nearly,hegde2016fast}. To obtain approximate convergence guarantees, existing works of this type assume projection oracles can be solved exactly or with very high approximations guarantees. However, projections satisfying these requirements are usually hard to find for problem (\ref{def:problem}). Furthermore, multiple projections may be needed at per-iteration \cite{hegde2015nearly}.

Instead, the Frank-Wolfe (FW) algorithm \citep{frank1956algorithm} (a.k.a conditional gradient method) has been receiving increasing attention in recent years. Unlike PGD-based methods, FW-type methods, at each iteration, find a point using the \textit{linear minimization oracle} (LMO), which for many constraints may enjoy a much cheaper per-iteration cost than the projection oracle
\citep{combettes2021complexity}, and often obtain high-quality sparser solutions in early iterations. Hence, they are attractive for solving structured problems  \citep{krishnan2015barrier,briol2015frank,ping2016learning,berthet2017fast,allen2017linear,abernethy2017frank}. Yet, these attractive methods are less explored for graph-structured optimization problems.

To resolve problem (\ref{def:problem}) using FW-type methods, the main difficulty is that solving the LMO efficiently, even with $\mathcal D$ convex, is in general NP-hard for many structured models $\mathbb{M}$. A typical motivational example is a popular weighted graph model (WGM), where $\mathbb{M}$ contains all sets of $g$ connected components of a specified weighted graph \citep{hegde2015nearly}. Here, both the projection oracle and LMO are NP-hard to compute. While convergence rates exist for FW with approximate LMOs, they tend to be limited to two kinds of approximations: \textit{additive gap-approximate LMO} \citep{dunn1978conditional,jaggi2013revisiting} and \textit{multiplicative gap-approximate LMO} \citep{locatello2017unified,pedregosa2020linearly}. As such, we ask the following crucial question

\begin{center}
\textit{Do an additive and multiplication gap-approximate LMO exist for solving GSCO problems?}
\end{center}

\paragraph{Our contributions.} In particular, by answering this question negatively, we open and explore the space for inexact FW methods that are more appropriate for this class of problems. As demonstrated, for a WGM $\mathbb{M}$ of $\mathcal{D}$, one can always find adversarial examples to show that gap-additive and gap-multiplicative LMOs are as hard to resolve as exact LMOs. Therefore, the existing approximate-LMO FW convergence rates are inapplicable to GSCOs in general.\footnote{Although this paper focuses on a specific model, our proposed methods are applicable for any $\mathbb{M}$ whenever its DMO is available.}

Instead, we propose to use an approximate \textit{dual maximization oracle} (DMO), which for several important GSCO problems can be easy to find in practice. This assumption is equivalent to multiplicatively approximating a key inner product, rather than the gap. We show that, for GSCOs, a simple heuristic method acts as an approximate DMO with a constant $\delta$ error guarantee,  but no non-exact gap-additive or gap-multiplicative LMO exists.

The main theoretical contribution is then to give convergence rates of the approximate FW methods under a $\delta$-approximate DMO assumption. We show that when $f$ is $L$-smooth,  a standard FW using a $\delta$-approximate DMO converges as $\mathcal{O}((1-\delta)\sqrt{s} / \delta)$ where $s$ is the  maximum sparsity allowed in $\mathbb{M}$, and as $\mathcal{O}(L/(\delta^2 t))$ over $\mathcal{D}/\delta$. The convergence rate of the latter case is consistent with recent advances of generalized matching pursuit (MP) \citep{locatello2018matching}.

Inspired by works concerning the \textit{nearest extreme point oracle} \cite{garber21a}, we propose a new variant of FW, which is  empirically faster. We also show that the convergence rate of this new variant is faster at $\mathcal{O}(1/t^2)$ when the solutions are on the boundary with $\delta=1$. Empirically, we observe that these assumptions are not necessary to achieve this faster rate. Additionally, we show that an approximate version converges to a point in $\mathcal{D}/\delta$ at $\mathcal{O}(1/t)$.

The main contributions can be summarized as follows:

\begin{itemize}
\item We prove that the exact DMO for general GSCOs is NP-hard and no efficient additive and multiplicative gap-approximate LMO exists.
\item We propose two FW-type methods and provide convergence rate analysis when the problem is solved over $\mathcal{D}$ or a relaxed set $\mathcal{D}/\delta$.
\item We empirically demonstrate that the proposed methods are more effective and efficient compared with PGD-based and MP-based methods on the graph-structured linear regression problem.
\end{itemize}

\subsection{Related work}

Existing works on sparse optimization considers models that limit the number of nonzeros in the solution. However, many real-world applications have more intricate graph structures as important priors; the tradeoff is that strictly enforcing them often loses convexity of the feasibility space.

\paragraph{FW method and its variants.} The FW method \citep{frank1956algorithm} and its variants \citep{jaggi2013revisiting,lacoste2015global,garber2016linear,bashiri2017decomposition,balasubramanian2018zeroth,kerdreux2018frank,lei2019primal,luise2019sinkhorn,locatello2019stochastic,thekumparampil2020projection,garber2020revisiting,combettes2020boosting,pedregosa2020linearly,sun2020safe}  for convex constrained problems have recently received popularity mainly due to two advantages. First, it is projection free--the LMO is often much cheaper to compute than the projection oracle. Second, in applications with desired structured sparsity, early FW iterations tend to be naturally sparse. Inspired from these advantages, we seek to propose FW-type methods for GSCO problems.

Recent works put effort into accelerating FW with modifications. More specifically, \citet{lacoste2015global} and \citet{garber2016linear}
propose away-step variants to reduce the computation overheads. An important related work \citep{garber2015faster} shows that if $\mathcal D$ is a strongly convex set, then the FW rate can be improved to $\mathcal{O}(1/t^2)$. However, sparsity-inducing sets are generally not strongly convex, favoring low-dimensional facets of $\mathcal D$ as solutions; this is also true for our graph-sparsity application. We therefore achieve the $O(1/t^2)$ acceleration without necessitating strong convexity, drawing inspiration from the \emph{nearest extreme point oracle} explored in \citet{garber21a}.

\paragraph{Connections with other methods.} One of the reasons that FW is so heavily studied is its connections with other important greedy methods. For example, \citet{bach2015duality} shows that FW is closely related with mirror decent through duality; and \citet{locatello2017unified} and \citet{combettes2019blended} explore the close connection between FW and MP. 

\paragraph{Approximation of LMO.} The study of inexact FW methods tend to center on two types of LMO errors: gap-additive \citep{dunn1978conditional,jaggi2013revisiting} and gap-multiplicative \citep{locatello2017unified,pedregosa2020linearly}. Under these two assumptions, the convergence rates are $\mathcal{O}(\delta/t)$ and $\mathcal{O}(1/(\delta^2 t))$ respectively. However, as we will demonstrate in Sec. \ref{section:gap-lmo}, these two regimes do not adequately describe efficient methods for GSCO problems. Instead, we explore that is multiplicative with respect to an inner product; this is inspired by \cite{locatello2018matching} making a similar analysis for MP, and is related to the works of \citet{hazan2018online} and \citet{garber2017efficient}. Another related work is \citet{kerdreux2018frank}, which analyzes FW methods where the LMO is approximated by subsampling a subset of atoms at each iteration.

\section{Preliminaries}
\label{gen_inst}
We begin by introducing notations and setup. We then define the graph-structured support sets and the FW-type algorithm. \textit{All proofs are postponed to the appendix.}

\paragraph{Notations and setup.} We consider optimization over variables $\bm x\in \mathbb R^d$, which induce a ground set $[d]:=\left\{1,2,\ldots, d\right\}$. Uppercase letters (e.g., $I, S$) stand for subsets of $[d]$. ${\bm A} \in \mathbb{R}^{n\times d}$ denotes a matrix and ${\bm x}, {\bm y} \in \mathbb{R}^d$ are column vectors. The masked vector $\bm x_S$ is defined as ${\bm x}_S(i) = x_i$ if $i \in S$, and 0 otherwise. We consider problems in  Euclidean space equipped with an inner product $(\mathbb{R}^d, \langle\cdot,\cdot\rangle)$, where $\langle \bm x, \bm y\rangle := \sum_{i=1}^d x_i y_i$. The induced $\ell_2$-norm is $\|\bm x\|_2 = \sqrt{\langle \bm x,\bm x \rangle}$.  An $1/\alpha$-scaling of set $\mathcal{D}$ is denoted as $\mathcal{D}/\alpha := \{ \bm x/\alpha : \bm x \in \mathcal{D}\}$. 

We define $\bm x$ on an underlying  graph $\mathbb{G} (\mathbb{V},\mathbb{E})$ where the node set $\mathbb{V}=[d]$ and edge set $\mathbb{E} \subseteq [d]\times [d]$. Each entry $x_i$ is associated with each node $v_i$. Given $S^\prime \subseteq \mathbb{V}, \mathbb{E}^\prime \subseteq \mathbb{E}$, $\mathbb{G}(S^\prime,\mathbb{E}^\prime)$ represents a subgraph of $\mathbb{G}$, we say $f$ is $L$-smooth if, for all ${\bm x},{\bm y} \in \mathcal{D}$, there exists $L>0$ such that
\begin{equation*}
f({\bm x}) - f({\bm y}) - \langle \nabla f({\bm y}), {\bm x} -{\bm y} \rangle \leq \frac{L}{2} \| {\bm x} -{\bm y} \|_2^2.
\end{equation*}

\paragraph{Graph-structured support sets.} Before introducing the graph-structured support sets, we recall a more general form, structured support sets, as defined in the following.

\begin{definition}[Structured support set]
Given $\mathbb{M}:=\{S_1, S_2, \ldots, S_m\}$, a collection of subsets of $[d]$, with $\cup_{i} S_i = [d]$, a structured support set $\mathcal{D}(C,\mathbb{M})$ is
\begin{equation}
\mathcal{D}(C,\mathbb{M}) := 
\operatorname{conv}\{\bm x: \|\bm x\|_2 \leq C, {\rm supp}({\bm x}) \in \mathbb{M} \}. \nonumber
\end{equation}
\end{definition}
The definition above is generally enough to model many interesting structures of $\bm x$. However, for graph-related problems, we specifically focus on where each $S_i$ captures graph-structured information important for real-world applications. We define the graph-structured support sets as the following.
\begin{definition}[Graph-structured support sets $\mathcal{D}$] Define a graph $\mathbb{G}(\mathbb{V}=[d], \mathbb{E})$
and consider a structured support set $\mathcal{D} := \mathcal{D}(C, \mathbb{M})$ where $C>0$ and $\mathbb{M}= \{S_1, S_2,\ldots, S_m\}$. $\mathcal{D}$ is a \textit{graph-structured support set} if each $S_i$ is associated with a subgraph $\mathbb{G}(S_i,\mathbb{E}_i)$. We simply denote this set as $\mathcal{D}$.
\label{def:graph-support-norm-set}
\end{definition}

\begin{figure}
\centering
\begin{minipage}{0.25\textwidth}
\centering
\includegraphics[width=0.6\textwidth]{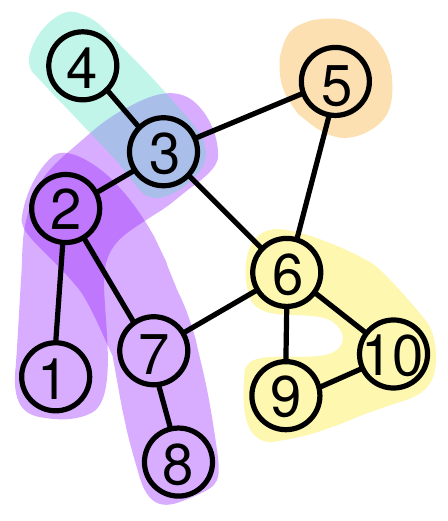}
\vspace{-5mm}
\end{minipage}%
\begin{minipage}{0.25\textwidth}
\centering
\includegraphics[width=0.6\textwidth]{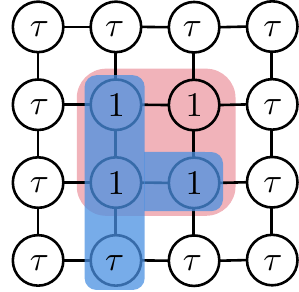}
\vspace{-5mm}
\end{minipage}
\caption{\textit{Left}: An element of a $g$-subgraph model $\mathbb{M}(\mathbb{G}, s,g)$  defined on a 10-node graph where each colored region is a subgraph. ($g=4$, $s=5$).  \textit{Right}: An example graph with $\mathbb{M}=\mathbb{M}(\mathbb{G}, s=4, g=1)$ where $f(\bm x) = \bm x^\top \bm x/2 - \bm x^\top \bm b$ with $\bm b = [1, 1, 1, 1, \tau, \ldots, \tau]^\top$ for some $0 < \tau < 1/2$ and . The red region is the optimal structure support of $\bm x^*$ while the blue is a best approximate structure support.\vspace{-5mm}}
\label{fig:example-model}
\end{figure}

Models of many real-world learning problems can be expressed as elements of the graph-structured support set. For example, learned structured polytopes are connected paths in a graph \cite{garber2016linear}, and models of multi-task learning are connected cliques \cite{mcdonald2016new}. An important instance of Def. \ref{def:graph-support-norm-set} is a simplified version of the weighted graph model $\mathbb{M}$ \cite{hegde2015nearly}, which we call a $g$-subgraph model.
\begin{definition}[$g$-subgraph model~\citep{hegde2015nearly}]
Given an underlying  graph $\mathbb{G}(\mathbb{V},\mathbb{E})$,  the graph model $\mathbb{M}\triangleq \mathbb{M}(\mathbb{G}, s, g)$ is a  union of $g$ subgraphs
\begin{equation}
\mathbb{M}(\mathbb{G}, s, g) = \{ S \triangleq S_1 \cup \cdots \cup S_g: |S|\leq s \}, \label{def:wgm}
\end{equation}
where subgraph $\mathbb{G}(S_i,\mathbb{E}_i)$ is connected.
\end{definition}
We illustrate $\mathbb{M}(\mathbb{G}, s, g)$ in Fig. \ref{fig:example-model} (left).

\paragraph{FW-type algorithm.}  Given an initial $\bm x_0 \in \mathcal{D}$ and a learning rate $\eta_t \in [0,1]$, for all $t \geq 0$, a FW-type algorithm for solving problem (\ref{def:problem}) uses the following updates 
\begin{equation}
\bm x_{t+1} = \bm x_t + \eta_t (\rho\bm v_t - \bm x_t), 
\label{equ:fw-updates}
\end{equation}
where $\bm v_t$ is the minimizer of \textit{graph-structured LMO}
\begin{equation}
\text{LMO}: \bm v_t \in \argmin{\bm v \in \mathcal{D}(C,\mathbb{M})} \left\langle a \bm x_t + b \nabla f(\bm x_t), \bm v \right\rangle. \label{equ:lmo}    
\end{equation}
Setting $a=0, b = 1, \eta_t = 2 / (t+2)$ with $\rho = 1$ recovers the standard FW. We will discuss more of its details in Sec. \ref{section:main-algorithms}. In the rest, we focus on the graph-structured support set $\mathcal D(C, \mathbb M)$ corresponding to $g$-subgraph model $\mathbb M(\mathbb{G}, s, g)$.

\section{Graph-structured LMOs}
\label{section:gap-lmo}
This section deals with the computation aspects of our generalized LMO (\ref{equ:lmo}). We first present the hardness result in the standard case and demonstrate how two popular gap-based LMOs are inappropriate in our setting. 

\subsection{Hardness of graph-structured LMOs}
\label{section:hardness-result}
The computational barrier of  (\ref{equ:lmo}) is mainly due to the combinatorial nature of $\mathbb{M}$. To illustrate this, we first limit our attention to the familiar LMO, where  $a=0, b=1$. Note that in general, the LMO returns $\bm v_t$ an extreme point of $\mathcal D$. Hence, one can reformulate it as a subspace identification problem\footnote{Without loss of generality, we assume $a=0, b=1$ in (\ref{equ:lmo}). In the rest, we always assume $\mathcal{D}:= \mathcal{D}(C,\mathbb{M})$.}
\[
\min_{\bm v \in \mathcal D} \left\langle \nabla f(\bm x_t), \bm v \right\rangle = \min_{S^* \in \mathbb{M}, \|\bm v\|_2\leq C} \left\langle \nabla f(\bm x_t)_{S^*}, \bm v  \right\rangle,
\]
where $S^*$ is an optimal support set minimizing the inner product in \eqref{equ:lmo}. Hence, the minimizer is
\begin{equation} \bm v_t = - \tfrac{C \cdot \nabla f(\bm x_t)_{S^*}}{\| \nabla f(\bm x_t)_{S^*}\|_2}, \; S^* \in \argmax{S \in \mathbb{M}}\, \| \nabla f(\bm x_t)_S \|_2^2.
\label{equ:lmo-solution}
\end{equation}
From \eqref{equ:lmo-solution}, we see that the computational complexity of graph-structured LMO is bottlenecked by finding $S^*$, whose complexity depends solely on $\mathbb{M}$. For example, in the simple instance when $\mathbb{M} = \{S \subseteq [d] : |S| \leq s \}$, the optimal $s$-sparse set  can be discovered in $\mathcal{O}(d)$ time using a selection algorithm \cite{floyd1975algorithm}.
However, when $\mathbb{M}$ is the $g$-subgraph model (\ref{def:wgm}) or other types of models \cite{baldassarre2016group,lim2017k}, finding $S^*$ is NP-hard in general as presented in Theorem \ref{thm:np-hard}.
\begin{theorem}[Hardness of graph-structured LMO]
Given $C >0, a=0, b=1$, convex differentiable function $f$, and the graph-structured model $\mathbb{M}$, the computation of an exact graph-structured LMO defined in (\ref{equ:lmo}) is NP-hard. 
\label{thm:np-hard}
\end{theorem}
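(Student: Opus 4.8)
The plan is to show that a single call to an exact LMO would solve a known NP-complete problem in polynomial time. By the reformulation \eqref{equ:lmo-solution}, evaluating the LMO at a point $\bm x_t$ returns $\bm v_t = -C\,\nabla f(\bm x_t)_{S^\star}/\|\nabla f(\bm x_t)_{S^\star}\|_2$ for some $S^\star \in \argmax{S\in\mathbb{M}}\|\nabla f(\bm x_t)_S\|_2^2$, so one such call reveals both an optimal support (as $\operatorname{supp}(\bm v_t)$) and the optimal value, since $\langle \nabla f(\bm x_t),\bm v_t\rangle^2/C^2 = \max_{S\in\mathbb{M}}\|\nabla f(\bm x_t)_S\|_2^2$. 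It therefore suffices to exhibit a graph $\mathbb{G}$, a $g$-subgraph model $\mathbb{M}(\mathbb{G},s,g)$, and a convex differentiable $f$ for which deciding whether this value exceeds a prescribed threshold is NP-hard. I would reduce from \textsc{Exact Cover by 3-Sets} (X3C), which is NP-complete.

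Given an X3C instance with ground set $X$, $|X|=3q$, and a family $\mathcal{C}$ of $3$-element subsets of $X$, build $\mathbb{G}$ on the vertex set $\{r\}\cup\mathcal{C}\cup X$ with an edge from $r$ to every set-vertex $c\in\mathcal{C}$ and an edge from each $c$ to each of its three elements; set $g=1$ and $s=4q+1$. Take $f(\bm x)=\tfrac12\|\bm x\|_2^2-\langle\bm u,\bm x\rangle$ and evaluate the LMO at $\bm x_t=\bm 0\in\mathcal{D}$, so that $\nabla f(\bm 0)=-\bm u$ and $\|\nabla f(\bm 0)_S\|_2^2=\sum_{i\in S}u_i^2$; choose $u_i^2 = M := s+1$ for $i\in\{r\}\cup X$ and $u_i^2=1$ for $i\in\mathcal{C}$, so that $\bm u$ has strictly positive entries and $f$ is a bona fide $1$-smooth convex function. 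A short counting argument then shows that a feasible (i.e., connected) $S$ with $|S|\le s$ satisfies $\sum_{i\in S}u_i^2\ge M(3q+1)$ if and only if $S\supseteq\{r\}\cup X$; and because every $x\in X\cap S$ can only reach $r$ through a set-vertex in $S$ while $|S\setminus(\{r\}\cup X)|\le q$, such an $S$ exists if and only if some $q$ members of $\mathcal{C}$ exactly cover $X$. Hence a polynomial-time exact LMO would decide X3C.

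Concretely, the steps in order are: (i) record that one LMO evaluation exposes the optimal value $\max_{S\in\mathbb{M}}\|\nabla f(\bm x_t)_S\|_2^2$, immediate from \eqref{equ:lmo-solution}; (ii) specify the gadget graph and the weight vector $\bm u$ above, checking positivity so that $f$ is genuinely convex and differentiable and \eqref{equ:lmo-solution} is nondegenerate; (iii) prove the two-way equivalence between ``a connected subgraph of size $\le s$ with weight $\ge M(3q+1)$'' and ``an exact cover of $X$ by $q$ sets of $\mathcal{C}$''; (iv) note that enlarging $g$ beyond $1$ only enlarges $\mathbb{M}$, so the hardness persists for all $g\ge 1$. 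The main obstacle is step (iii): the weights and cardinality budget must be calibrated so that weight $\ge M(3q+1)$ within $s$ vertices is attainable \emph{only} by taking all of the heavy vertices $\{r\}\cup X$ together with exactly a $q$-set exact cover --- ruling out ``cheating'' configurations that hoard light set-vertices or include only a strict subset of the heavy vertices (both excluded by the choice $M>s$, the latter because the heavy vertices are pairwise nonadjacent). The remaining bookkeeping --- that $\bm 0\in\mathcal{D}$, that $f$ is $1$-smooth, and that the value read off $\bm v_t$ (rather than merely its support) suffices --- is routine.
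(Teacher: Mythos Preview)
Your reduction from \textsc{X3C} is correct and gives a self-contained proof for the concrete $g$-subgraph model with $g=1$: the weight calibration $M=s+1$ forces any feasible $S$ with $\sum_{i\in S}u_i^2\ge M(3q+1)$ to contain all $3q+1$ heavy vertices, and the connectivity and cardinality constraints then encode exact cover exactly as you describe. The paper takes a different route: it invokes an existing NP-hardness result for the \emph{group model selection} problem of \citet{baldassarre2016group} (itself reduced from maximum weighted coverage) and shows that any GMS instance embeds as an LMO instance by taking $f(\bm x)=\bm x^\top\bm y$ and letting $\mathbb{M}$ encode the group structure. Your approach is more elementary and ties the hardness directly to the specific model $\mathbb{M}(\mathbb{G},s,1)$ the paper actually works with, whereas the paper's argument is shorter but leans on an external lemma and is less explicit about which graph $\mathbb{G}$ realizes the required $\mathbb{M}$. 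One caveat: your step (iv) is not valid as stated---enlarging the feasible set $\mathbb{M}$ can only make the maximization \emph{easier}, not harder (indeed, for $g\ge s$ the model degenerates to plain $s$-sparsity and the LMO becomes trivial)---but since the theorem only asserts hardness for \emph{some} graph-structured $\mathbb{M}$, your $g=1$ construction already suffices and (iv) can simply be dropped.
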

\begin{remark}
The NP-hardness result is obtained via a reduction from the model selection problem \cite{baldassarre2016group}, which contains all instances of the maximum weighted coverage problem \cite{khuller1999budgeted}.
\end{remark}

\subsection{Approximate LMOs and adversarial examples}
\label{subsection:adv-example}
To overcome the computational barrier, we resort to finding an approximate $\bar{\bm v}_t$ at each iteration. To obtain meaningful convergence rates for FW-type algorithms, one must first establish a metric of \emph{approximation quality}, which will characterize the final bounds. 

\paragraph{Additive approximate LMO.} The gap-additive \footnote{Given $\bm x \in \mathcal{D}$, the ``gap'' is to measure the difference between $\min_{\bm v \in \mathcal{D}} \langle \nabla f(\bm x), \bm v \rangle$ and $\langle \nabla f(\bm x), \bar{\bm v}_t \rangle$} approximate LMO \cite{dunn1978conditional} finds $\bar{\bm v}_t$ such that
\begin{flalign}
\left\langle \nabla f(\bm x_t), \bar{\bm v}_t \right\rangle \leq \min_{\bm v \in \mathcal{D}} \left\langle \nabla f(\bm x_t), \bm v  \right\rangle + \mathcal{O}(\tfrac{\epsilon}{t}), \label{inequ:approx-lmo-1}
\end{flalign}
where $\epsilon$ is the accuracy parameter and the approximate tolerance must decay over time in order to obtain an $\mathcal{O}(1/t)$ convergence rate.

\paragraph{Multiplicative approximate LMO.} Another common approximation regime is the gap-multiplicative approximate LMO \citep{locatello2017unified}, which returns $\bar{\bm v}_t$ such that
\begin{flalign}
\underbrace{\left\langle \nabla  f(\bm x_t), \bar{\bm v}_t - \bm x_t \right\rangle}_{\bar g_t(\bm x_t)} \leq \delta \cdot \underbrace{\min_{\bm v \in \mathcal{D}} \left\langle \nabla f(\bm x_t), \bm v - \bm x_t \right\rangle}_{g_t(\bm x_t)}, \label{inequ:approx-lmo-2}
\end{flalign}
where $\delta \in (0,1]$ is the approximation factor. Note that \eqref{inequ:approx-lmo-2} is only possible if the approximation $\bar g_t(\bm x_t) < 0$ when $\bm x_t \notin \arg\min_\mathcal{D} f$. However, we show these two approximate LMOs \eqref{inequ:approx-lmo-1} and \eqref{inequ:approx-lmo-2} are generally impractical to obtain by constructing adversarial examples.  That is, applied to the graph-structured support set $\mathcal{D}$; cases exist where any ``approximation'' is necessarily exact.

\paragraph{An adversarial problem setup.} Consider a grid graph $\mathbb G$ (Fig. \ref{fig:example-model}, right) and assume that $\mathbb{M} = \mathbb{M}( \mathbb{G}, s=4, g=1)$, i.e., a set of connected components of $\mathbb{G}$, each with at most four nodes. We map the four center nodes to the first 4 coordinates of $\bm x$. Suppose $\tau \in (0,1/2)$ and
\[
f(\bm x) = \tfrac{1}{2} \bm x^\top \bm x - \bm x^\top \bm b, \quad \bm b = [1, 1, 1, 1,\tau, \cdots, \tau]^\top.
\]
Take $C = 1$ and then unique optimal solution of (\ref{def:problem}) has
\begin{eqnarray*}
\bm x^* &=&[\tfrac{1}{2},\tfrac{1}{2},\tfrac{1}{2},\tfrac{1}{2}, 0, \ldots, 0]^\top,\\
\nabla f(\bm x^*) &=& -[\tfrac{1}{2},\tfrac{1}{2},\tfrac{1}{2},\tfrac{1}{2},\tau,\cdots,\tau]^\top.
\end{eqnarray*}
When $\bm x_t = \bm x^*$, $\bm v_t = \bm x^*$, and the duality gap $g_t(\bm x^*)=0$.

\paragraph{Gap-additive bound cannot decay properly.} In the ideal case,  $\bm x_t \overset{t \rightarrow \infty}{\longrightarrow} \bm x^*$. However, in the example above,
pick any non-optimal $\bar{\bm v}_t$; for example, an instance, which only 1 element is suboptimal is \[\bar {\bm v}_t=\tfrac{1}{\sqrt{3/4+\tau^2}}[1,1,1,0,\tau, 0, \cdots, 0]^\top
\]
giving $g_t(\bm x^*) - \bar{g}_t({\bm x}^*) = 1- \sqrt{\tfrac{3}{4} + \tau^2}  > 0$,
which is strictly positive and constant in $t$, hence violating (\ref{inequ:approx-lmo-1}), even at $\bm x^*$. Moreover, this is a lower bound, since any more suboptimal $\bar {\bm v}_t$ can only increase this bound. The adversarial example at $\bm x^*$ shows that even if the algorithm starts arbitrarily close to the true solution $\bm x^*$, it still may fail with approximate LMOs. It is definitely not a unique problem point and not hard to extend this example not to $\bm x^*$ but any $\bm x_t$, for the appropriate problem.  For example, one can find two adversarial examples at a non-optimal point $\bm x_t$ in Appendix \ref{appendix:adv-examples}. Thus, any gap-additive assumption with a decaying tolerance will eventually require an exact LMO, which is NP-hard in general as stated in Thm. \ref{thm:np-hard}.

\paragraph{Gap-multiplicative estimate could be negative.} 
Continuing with the above example, we notice that
\[
\bar{g}_t(\bm x^*) = \nabla f(\bm x^*)^\top (\bm x^* - \bar{\bm v}_t) = \sqrt{3/4 + \tau^2} - 1 < 0,
\]
which means no positive value of $\delta \in (0, 1)$ can possibly satisfy (\ref{inequ:approx-lmo-2}); that is, the assumption is only satisfied if $\delta = 1$ and the LMO is exact, which again is NP-hard. Finally, we stress that the above adversarial examples are not contrived; they are generated using a very simple $f$ and can be applied to very general types of graphs.

To further strengthen the argument that there exist interesting cases where no efficient approximate algorithms exist, note that one can map all instances of the maximum weighted coverage problem \cite{khuller1999budgeted} to our problem, but the best greedy algorithm for the maximum weighted coverage problem has approximation threshold of $1-1/e$ \cite{feige1998threshold}. This
indicates a subset problems of (\ref{def:problem}) cannot have a better approximation bound.

\section{Approximate FW-type methods}
\label{section:main-algorithms}

In practice, the above  examples show that both (\ref{inequ:approx-lmo-1}) and (\ref{inequ:approx-lmo-2}) can be impossible to find unless we solve the LMO with exact support. Instead, rather than approximating the gap, we turn to approximating $\langle \nabla f(\bm x_t), \bm v \rangle$ using a \emph{dual maximization oracle}; here, efficient inexact methods do exist and are easier to obtain. We first present the dual maximization oracle and then present our proposed approximate methods.

\subsection{Approximate dual maximization oracle}
\label{section:dmo:inner-product}

\begin{definition}[Inner Product Operator]
Given $\bm z \in \mathbb{R}^d$, $\mathcal{D} \subseteq \mathbb{R}^d$, and approximation factor $\delta \in (0,1]$, the approximated Inner Product Operator (IPO) returns $\bm v$ such that
\begin{flalign}
\textit{Approximate IPO\quad} \left\langle \bm z, \bm v \right\rangle \leq \delta \cdot \min_{\bm s \in \mathcal{D}} \left\langle \bm z, \bm s \right\rangle. \label{def:lmo}
\end{flalign}
We denote such set of $\bm v$ as $(\delta, \bm z, \mathcal{D})$-IPO. 
\end{definition}
Note that in general, if $\mathcal D$ is a symmetric nontrivial set then this quantity is negative. This approximate inner product operator is not new. Outside of FW-type methods, it is an important operator that can be used in approximate Matching Pursuit algorithms  \citep{locatello2017unified,mokhtari2018conditional} and online linear optimization methods \citep{garber2017efficient,garber2021efficient}.

\begin{definition}[Dual Maximization Oracle]
Given the structure support set $\mathbb{M}$, the \textit{dual approximation oracle} finds an $S \in \mathbb{M}$  such that
\begin{equation}
\| \bm z_S\|_* \geq \delta \cdot\max_{S^\prime \in \mathbb{M}}\| \bm z_{S^\prime} \|_*, \label{inequ:approx-oralce}
\end{equation}
where $\delta \in (0,1]$ is the approximation factor and $\|\cdot \|_*$ corresponds to the dual norm ($\|\cdot\|_*=\|\cdot\|_2$ in our case). We denote such set $S$ as the $(\delta,\bm z, \mathcal{D}$)-DMO.
\end{definition}
An important family of examples of this oracle are the approximate projections studied in \citet{hegde2015nearly,hegde2016fast,golbabaee2018inexact}. A key observation from the above definitions is that the approximate IPO is equivalent to an approximate dual norm calculation. It is also in a recent work \citep{garber21a}: for certain norm balls, LMO is equivalent to the projection on extreme points. 

One may notice that the exact LMO and DMO
are identical. However, the approximation conditions are different: the usual assumptions for approximate LMO are not
always realistic which we demonstrated this in previous section. In contrast, the DMO-inspired condition is weaker, but always satiable with existing methods (see the following Section \ref{subsect:dmo}). This weaker assumption complicates the convergence analysis (see Section \ref{section:convergence-rate}), and we believe that the estimated error does not converge globally without making any assumptions.

\begin{theorem}
Given the set $\mathcal{D}$ and suppose $S \in (\delta,\bm z, \mathcal{D})$-DMO. Define the approximate supporting vector $\tilde{\bm v}_t \triangleq -C\cdot\bm z_S / \|\bm z_S\|_2$.  Then, $\tilde{\bm v}_t \in (\delta, \bm z, \mathcal{D})$-IPO.
\label{theorem:dmo-to-ipo}
\end{theorem}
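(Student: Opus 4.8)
The plan is to connect the two oracles through their common structure: both involve selecting a support set $S \in \mathbb{M}$, and the reformulation of the LMO in \eqref{equ:lmo-solution} already tells us that for a \emph{fixed} support set $S$, the best vector supported on $S$ is $-C \bm z_S / \|\bm z_S\|_2$, with corresponding inner product $-C\|\bm z_S\|_2$. So the argument is essentially a bookkeeping exercise relating $\|\bm z_S\|_2$ to $\langle \bm z, \tilde{\bm v}_t\rangle$ and then to $\min_{\bm s \in \mathcal{D}} \langle \bm z, \bm s\rangle$.

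First I would compute $\langle \bm z, \tilde{\bm v}_t \rangle$ directly: since $\tilde{\bm v}_t = -C \bm z_S / \|\bm z_S\|_2$ and $\bm z_S$ has support contained in $S$, we have $\langle \bm z, \tilde{\bm v}_t\rangle = \langle \bm z_S, \tilde{\bm v}_t\rangle = -C\|\bm z_S\|_2^2/\|\bm z_S\|_2 = -C\|\bm z_S\|_2$. Next I would invoke the DMO guarantee \eqref{inequ:approx-oralce}, namely $\|\bm z_S\|_2 \geq \delta \max_{S' \in \mathbb{M}} \|\bm z_{S'}\|_2$, which gives $\langle \bm z, \tilde{\bm v}_t \rangle = -C\|\bm z_S\|_2 \leq -\delta C \max_{S'\in\mathbb{M}}\|\bm z_{S'}\|_2$. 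Finally I would identify $-C \max_{S' \in \mathbb{M}}\|\bm z_{S'}\|_2$ with $\min_{\bm s\in\mathcal{D}}\langle \bm z, \bm s\rangle$: by the same reasoning as in \eqref{equ:lmo-solution} (splitting the minimization over $\mathcal{D}$ into a choice of support $S'\in\mathbb{M}$ and then an optimization over $\{\bm v : \|\bm v\|_2 \le C, \operatorname{supp}(\bm v)\subseteq S'\}$, the latter attained at $-C\bm z_{S'}/\|\bm z_{S'}\|_2$ giving value $-C\|\bm z_{S'}\|_2$), the minimum over $\mathcal D$ equals $-C\max_{S'\in\mathbb{M}}\|\bm z_{S'}\|_2$. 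Chaining these yields $\langle \bm z, \tilde{\bm v}_t\rangle \leq \delta \cdot \min_{\bm s\in\mathcal{D}}\langle \bm z, \bm s\rangle$, which is exactly the Approximate IPO condition \eqref{def:lmo}, so $\tilde{\bm v}_t \in (\delta,\bm z,\mathcal{D})$-IPO.

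There are a couple of minor technical points to handle carefully rather than genuine obstacles. One is the degenerate case $\bm z_S = \bm 0$ (and more generally $\bm z = \bm 0$): if $\bm z$ has a zero restriction to every $S' \in \mathbb{M}$, then since $\cup_i S_i = [d]$ we must have $\bm z = \bm 0$, the inner product is $0$ for every feasible $\bm v$, and any choice — e.g. defining $\tilde{\bm v}_t = \bm 0$ — trivially satisfies \eqref{def:lmo}; I would dispatch this case in one line. The other is the distinction between minimizing over the convex hull $\mathcal{D}$ versus over the generating set $\{\bm v: \|\bm v\|_2\le C, \operatorname{supp}(\bm v)\in\mathbb{M}\}$: a linear functional attains its minimum over a convex hull at an extreme point, so the two minima coincide, which is the fact already used implicitly in \eqref{equ:lmo-solution}. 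I expect the "main obstacle," to the extent there is one, is simply stating the support-set decomposition of $\min_{\bm s \in \mathcal{D}}\langle \bm z, \bm s\rangle$ cleanly and noting that $\operatorname{supp}(\bm z_{S'}) \subseteq S'$ (with possible strict inclusion) does not affect the computation, since padding a vector with zeros outside its support changes neither its norm nor its inner product with $\bm z$.
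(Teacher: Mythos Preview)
Your proposal is correct and follows essentially the same approach as the paper's own proof: compute $\langle \bm z, \tilde{\bm v}_t\rangle = -C\|\bm z_S\|_2$, apply the DMO inequality, and identify $\min_{\bm s\in\mathcal D}\langle \bm z,\bm s\rangle$ with $-C\max_{S'\in\mathbb M}\|\bm z_{S'}\|_2$ via the extreme-point reduction (the paper spells this out with Cauchy--Schwarz on each ball $\mathcal B(I)$, which is exactly what you invoke by citing \eqref{equ:lmo-solution}). Your handling of the degenerate case $\bm z_S=\bm 0$ is a nice addition the paper omits.
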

Thm. \ref{theorem:dmo-to-ipo} provides an important way to obtain an approximate IPO via DMO, which is usually easier to obtain. That is, if one can find $S$ such that $S \in (\delta,\bm z, \mathcal{D})\text{-DMO}$, then $\tilde{\bm v}_t$ is guaranteed in $(\delta, \bm z, \mathcal{D})$-IPO.

\subsection{Efficient methods for DMO}
\label{subsect:dmo}
\paragraph{Top-$g$ + neighbor visiting.}
Let us first consider a simple method where we find an approximate $(\delta,z,\mathcal D)$-DMO \eqref{inequ:approx-oralce} for the $g$-subgraph model defined by \eqref{def:wgm} by first finding the indices of the $g$ largest magnitude elements (denote as $I_g\subset S$) in the dual vector $\bm z$. Then add any feasible additional elements to $S$ until $|S|=s$.\footnote{The exact algorithm description is in Appendix \ref{appendix:section:dmo}.} This can be done by just picking any adjacent nodes (without violating connectivity constraint) to the first $g$ ``seed" nodes. Then for any $S^\prime\in\mathbb M$,
\begin{align*}
\ceil*{\frac{s}{g}} \|\bm z_{S} \|_2^2 &= \ceil*{\frac{s}{g}} ( \sum_{i\in I_g} |z_i|^2 + \sum_{j\in S \backslash I_g} |z_j|^2 ) \geq \|\bm z_{S^\prime}\|^2_2;
\end{align*}
thus $S$ approximates $S^*$ with $\delta =\sqrt{ 1/\lceil s/g \rceil}$. This algorithm runs in $\mathcal{O}(|\mathbb{E}|)$.

In the adversarial example shown in Sec. \ref{subsection:adv-example}, the 1-wrong examples of $S^\prime$ also achieve an approximate DMO-operator with $ \delta = 3/4 +\tau^2$. In other words, unlike the gap-additive and gap-multiplicative approximations, \emph{simple approximate solutions do not need to be exact in order to satisfy the IPO/DMO conditions}. Furthermore, one can find better methods from existing projection methods. For example, we can use the \emph{head projection} \cite{hegde2015nearly}, which provides a $\delta$-DMO with $\delta = \sqrt{1/14}$ of a general weighted graph model, which runs in  polynomial time $\mathcal{O}(|\mathbb{E}| \log^3 d)$. 

\subsection{Approximate FW-type methods via DMO}

We present ``standard'' approximate FW-type methods in Alg. \ref{algo:approx-fw}. There are 2 different design choices, which results in 4 variations. First, the approximate DMO can be performed on $\bm z_t = -\nabla f(\bm x_t)$, as in the usual FW (termed \textsc{DMO-FW}), or on $\bm z_t = -(\bm x_t-\tfrac{1}{L\eta_t}\nabla f(\bm x_t))$ (termed \textsc{DMO-AccFW}), which gives an accelerated variant based on the nearest extreme point oracle explored in \citet{garber21a}; Second, to obtain $\bm x_t$, Option I (line 7) finds points within $\mathcal{D}$; while Option II (line 8) finds points over a relaxed constraint set $\mathcal{D}/\delta$. Specifically, \textsc{DMO-AccFW} returns a convex combination of normalized supporting ``gradient descent'' $S_t$. One may treat it as a “hybrid” of FW and gradient descent. Hence, it has project-free property like FW but converges faster than the usual FW method.

\begin{algorithm}[H]
\caption{FW-type methods for GSCOs}
\begin{algorithmic}[1]
\STATE \textbf{Input:} step size $\{\eta_t\}$, $\delta$, $L$, $C$, and $\mathbb{M}$
\STATE pick any point $\bm x_0$ in $\mathcal{D}$
\FOR{$t = 0,1,\ldots,$}
\STATE $\bm z_t = \begin{cases}\textsc{DMO-FW}:= - \nabla f(\bm x_t) \\
\textsc{DMO-AccFW}:= - \left(\bm x_t - \tfrac{\nabla f(\bm x_t)}{L \eta_t}\right)\end{cases}$
\STATE $S_t = (\delta, -\bm z_t, \mathcal{D})$-DMO
\STATE $\tilde{\bm v}_t =\tfrac{ C \cdot (\bm z_t)_{S_t}}{\left\|( \bm z_t)_{S_t}\right\|_2}$
\STATE Option \textsc{I}:  $\bm x_{t+1} = \bm x_t + \eta_t (\tilde{\bm v}_t - \bm x_t)$
\STATE Option \textsc{II}:  $\bm x_{t+1} = \bm x_t + \eta_t (\tilde{\bm v}_t / \delta - \bm x_t)$ 
\ENDFOR
\STATE \textbf{Return} $(\bm x_{\bar{t}}, f(\bm x_{\bar{t}})), \bar{t} \in \argmin{t} f(\bm x_t)$
\end{algorithmic}
\label{algo:approx-fw}
\end{algorithm}

\section{Convergence analysis and sensing results}
\label{section:convergence-rate}

Denote the primal error $h(\bm x_{t}) := f(\bm x_{t}) - f(\bm x^*)$ and assume the step size $\eta_t = 2/(t+2)$.\footnote{Note that the solution $\bm x_{\bar{t}}$ always has $h(\bm x_{\bar{t}}) \leq h(\bm x_t)$.} We first establish the convergence rates of the methods, and then showcase resulting on the graph-structured linear sensing problem. Throughout this section, we assume $f$ to be $L$-smooth and denote
$\bm x^* \in \arg\min_{\bm x\in \mathcal{D}} f(\bm x)$ as a minimizer of (\ref{def:problem}).

\subsection{Convergence rate of \textsc{DMO-FW}}
By $L$-smoothness, for all $t \geq 0$, \textsc{DMO-FW-I} admits
\[
h({\bm x_{t+1}}) \leq  \left(1-\delta\eta_t\right)h({\bm x_t}) + Q_t,
\]
where 
\[
Q_t \triangleq (1-\delta)\eta_t\langle \bm x_t, -\nabla f(\bm x_t) \rangle  + \tfrac{L \eta_t^2}{2} \|\tilde{\bm v}_t - \bm x_t\|_2^2.
\]
Clearly, if $\delta = 1$, $Q_t = \mathcal{O}(1/t^2)$ and thus we can show recursively that $h(\bm x_{t+1}) = \mathcal{O}(1/t)$. However, if $\delta < 1$, the convergence of $h$ is dominated by $\eta_t|\langle \bm x_t, \nabla f(\bm x_t)\rangle|$, which is not easy to bound in general. Thus, when $\delta < 1$, the standard proof technique fails. Instead, we first establish a $\delta$-dependent convergence rate.

\begin{theorem}[Universal rate of \textsc{DMO-FW-I}] For all $t \geq 1$, the primal error of \textsc{DMO-FW-I} satisfies
\[
h(\bm x_t) \leq \begin{cases}
\min\left\{ 2 C\sqrt{s} \|\nabla f(\bm x_t)\|_\infty, P(\delta,2\delta) \right\}, &\hspace{-2ex} \delta \in (0,\tfrac{1}{2}] \\
\min \left\{ P(\delta, 1),  \tfrac{3(1-\delta)h(\bm x_0) + A_t}{(2\delta-1)(t+2)}\right\}, &\hspace{-2ex} \delta \in (\tfrac{1}{2},1],
\end{cases}
\]
where $s$ is maximal allowed sparsity, $P(\delta,\nu)$ is
\begin{equation}
P(\delta,\nu)\triangleq\frac{(1-\delta)9^\delta}{(t+2)^{2\delta}} \cdot h(\bm x_0) + \frac{\ln(t+1) + 1}{(t+2)^{\nu}} \cdot A_t \nonumber
\end{equation}
and $A_t \triangleq \max_{i \in [t]} 4(1-\delta)(i+2) |\left\langle \bm x_i, \nabla f(\bm x_i) \right\rangle| + 8 L C^2$.
\label{thm:1:fw-dmo-i}
\end{theorem}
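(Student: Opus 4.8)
The plan is to start from the one-step inequality already stated in the text, namely
\[
h(\bm x_{t+1}) \le (1-\delta\eta_t) h(\bm x_t) + Q_t, \quad Q_t = (1-\delta)\eta_t \langle \bm x_t, -\nabla f(\bm x_t)\rangle + \tfrac{L\eta_t^2}{2}\|\tilde{\bm v}_t - \bm x_t\|_2^2,
\]
with $\eta_t = 2/(t+2)$. The term $\|\tilde{\bm v}_t - \bm x_t\|_2^2 \le (2C)^2$ is bounded since both points lie in (a bounded superset of) $\mathcal D$, so the second part of $Q_t$ contributes $\le 8LC^2\eta_t^2$; the first part I will absorb into the quantity $4(1-\delta)(i+2)|\langle \bm x_i,\nabla f(\bm x_i)\rangle|$ that appears inside $A_t$, noting $(1-\delta)\eta_t|\langle \bm x_t,\nabla f(\bm x_t)\rangle| = \tfrac{1}{2(t+2)}\cdot 4(1-\delta)(t+2)|\langle\bm x_t,\nabla f(\bm x_t)\rangle|$. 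So $Q_t \le \tfrac{A_t}{2(t+2)}$ (up to the $8LC^2$ bookkeeping folded into $A_t$). This reduces the proof to analyzing the scalar recursion $h_{t+1} \le (1-\tfrac{2\delta}{t+2}) h_t + \tfrac{A_t}{2(t+2)}$.

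Next I would solve this recursion in two regimes by unrolling it and bounding the product $\prod_{i=j}^{t}(1-\tfrac{2\delta}{i+2})$. Using $1-x \le e^{-x}$ and $\sum_{i=j}^t \tfrac{1}{i+2} \approx \ln\tfrac{t+2}{j+2}$, this product is $\lesssim (\tfrac{j+2}{t+2})^{2\delta}$. Plugging in, the contribution of the initial error $h(\bm x_0)$ decays like $(t+2)^{-2\delta}$ — this is the $\tfrac{(1-\delta)9^\delta}{(t+2)^{2\delta}}h(\bm x_0)$ term (the constant $9^\delta$ and the factor $(1-\delta)$ come from handling the first few indices carefully and from the fact that when $\delta=1$ the initial term should vanish, which one gets by a slightly different grouping). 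The accumulated noise term is $\sum_{j} \tfrac{A_j}{2(j+2)}\cdot(\tfrac{j+2}{t+2})^{2\delta} \le \tfrac{A_t}{2(t+2)^{2\delta}}\sum_{j=0}^{t}(j+2)^{2\delta-1}$. When $\delta \le 1/2$ the exponent $2\delta - 1 \le 0$ so the sum is bounded by a harmonic-type sum $\lesssim \ln(t+1)+1$, giving the $\tfrac{\ln(t+1)+1}{(t+2)^{2\delta}}A_t$ piece of $P(\delta,2\delta)$; when $\delta > 1/2$ the sum is $\lesssim \tfrac{(t+2)^{2\delta}}{2\delta}$, which cancels the prefactor and yields the $\mathcal O(A_t/t)$ rate, and tracking constants gives the explicit $\tfrac{3(1-\delta)h(\bm x_0)+A_t}{(2\delta-1)(t+2)}$ bound. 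The variant $P(\delta,1)$ for $\delta>1/2$ comes from instead bounding the noise sum more crudely by $\ln$ before the cancellation, giving the alternative (sometimes sharper for small $t$) estimate. Finally, the $\min$ with $2C\sqrt{s}\|\nabla f(\bm x_t)\|_\infty$ is obtained separately and trivially: $h(\bm x_t) = f(\bm x_t)-f(\bm x^*) \le \langle \nabla f(\bm x_t), \bm x_t - \bm x^*\rangle$ by convexity (or by the gap at $\bm x_t$), and $\langle\nabla f(\bm x_t), \bm x_t-\bm x^*\rangle \le \|\nabla f(\bm x_t)\|_\infty \|\bm x_t-\bm x^*\|_1 \le \|\nabla f(\bm x_t)\|_\infty \sqrt{s}\,\|\bm x_t-\bm x^*\|_2 \le 2C\sqrt{s}\|\nabla f(\bm x_t)\|_\infty$ using the $s$-sparsity of extreme points of $\mathcal D$ and the diameter bound $2C$.

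The main obstacle I anticipate is the bookkeeping in the regime $\delta\in(\tfrac12,1]$: getting the exact constants $3(1-\delta)$ on $h(\bm x_0)$ and $(2\delta-1)$ in the denominator requires being careful about where the $\sum (j+2)^{2\delta-1} \le \tfrac{(t+2)^{2\delta}}{2\delta}$ type estimate is applied versus where one keeps the sharper $(t+2)^{2\delta-1}$ integral bound, and about the transient contribution of the first index or two (which is where the awkward $9^\delta$ and the $(1-\delta)$ factor originate — essentially $\eta_0 = 1$ so the $t=0$ step must be treated by hand and it is this step that injects the $h(\bm x_0)$ dependence with a coefficient that vanishes at $\delta=1$). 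A secondary subtlety is justifying that $A_t$ as defined (a max over $i\in[t]$) correctly dominates each per-step noise term $Q_i$ after the product weights are applied; this works because the weights $(\tfrac{j+2}{t+2})^{2\delta}$ are summable-in-$j$ against $\tfrac{1}{j+2}$ exactly in the way described, so pulling $A_t$ out of the sum is legitimate. Everything else — the smoothness expansion, the diameter bound, the $e^{-x}$ inequality for the product — is routine.
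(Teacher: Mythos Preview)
Your overall strategy matches the paper's: start from the one-step inequality, package the noise into the quantity $A_t$, unroll the recursion, bound the product $\prod(1-\tfrac{2\delta}{i+2})$ via $1-x\le e^{-x}$ and an integral estimate, and treat the $2C\sqrt{s}\|\nabla f(\bm x_t)\|_\infty$ bound separately via convexity. However, two concrete errors would make the proof fail as written.

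\textbf{Arithmetic in the noise term.} Your identity $(1-\delta)\eta_t|\langle\bm x_t,\nabla f(\bm x_t)\rangle| = \tfrac{1}{2(t+2)}\cdot 4(1-\delta)(t+2)|\langle\bm x_t,\nabla f(\bm x_t)\rangle|$ is off by a factor of $(t+2)$: the left side equals $\tfrac{2(1-\delta)}{t+2}|\langle\cdots\rangle|$, while the right equals $2(1-\delta)|\langle\cdots\rangle|$. The correct packaging is $Q_t \le \tfrac{A_t}{(t+2)^2}$, not $\tfrac{A_t}{2(t+2)}$. This matters: with the correct noise, the unrolled sum has summand $(j+2)^{2\delta-2}$, and then for $\delta\le 1/2$ the exponent is $\le -1$ so the sum really is harmonic-type. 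With your exponent $2\delta-1\in(-1,0]$ the sum is \emph{not} bounded by $\ln(t+1)+1$ (at $\delta=1/2$ it is $\Theta(t)$), so your conclusion would be false as stated. Once you correct the power to $(t+2)^{-2}$, your analysis goes through exactly as in the paper's Lemma.

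\textbf{The $2C\sqrt{s}$ bound.} The step $\|\bm x_t-\bm x^*\|_1 \le \sqrt{s}\,\|\bm x_t-\bm x^*\|_2$ is invalid: neither $\bm x_t$ nor $\bm x^*$ need be $s$-sparse (they lie in the convex hull, not among the extreme points), and their difference certainly need not be. The paper instead splits $|\langle\nabla f(\bm x_t),\bm x_t-\bm x^*\rangle|\le |\langle\nabla f(\bm x_t),\bm x_t\rangle|+|\langle\nabla f(\bm x_t),\bm x^*\rangle|$ and uses that $\bm x_t$ is an explicit convex combination of the $s$-sparse atoms $\tilde{\bm v}_i$ (and $\bm x^*\in\mathcal D$ is a convex combination of $s$-sparse extreme points), so each inner product is bounded by $C\sqrt{s}\|\nabla f(\bm x_t)\|_\infty$ term-by-term. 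Your invocation of ``$s$-sparsity of extreme points'' is the right intuition but must be applied to each atom, not to the difference vector.

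One smaller point: the paper obtains the sharp constant in $\tfrac{3(1-\delta)h(\bm x_0)+A_t}{(2\delta-1)(t+2)}$ by a direct induction rather than by unrolling and integral-bounding; your unrolling route will give the correct order but recovering those exact constants that way is delicate, as you anticipated.
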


When $\delta = 1$, Thm. \ref{thm:1:fw-dmo-i} recovers the standard convergence rate of the exact FW method. In the approximate case $(\delta < 1)$, the bound is more involved. With an additional assumption on the decay of the magnitude of gradient, the following corollary shows the overall practical bound.

\begin{corollary}[Practical rate of \textsc{DMO-FW-I}] Suppose $\nu \in (0, 1]$ and $\|\nabla f(\bm x)\|_\infty \leq B$, the primal error of \textsc{DMO-FW-I} admits the following practical bound
\begin{equation}
h(\bm x_t) \leq \begin{cases}
\mathcal{O}\left( \frac{B C\sqrt{s}}{t^{\nu}}\right), & \|\nabla f(\bm x_t)\|_\infty \leq \frac{B}{t^\nu} \\
\mathcal{O}\left(\frac{B C\sqrt{s}(1-\delta)}{\delta}\right), & Otherwise.
\end{cases}
\label{thm:1:practical-rate}
\end{equation}
where $s \triangleq \max_{S\in \mathbb{M}} |S|$ is the maximal allowed sparsity.
\label{corollary:practical-bound}
\end{corollary}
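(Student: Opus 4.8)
The plan is to read the corollary straight off Theorem~\ref{thm:1:fw-dmo-i} by inserting the uniform bound $\|\nabla f(\bm x)\|_\infty \le B$ into the three quantities that control that bound — $A_t$, $h(\bm x_0)$, and $\|\nabla f(\bm x_t)\|_\infty$ — and then splitting on whether $\|\nabla f(\bm x_t)\|_\infty$ has already dropped below $B/t^\nu$ or not. No new algorithmic analysis is required; everything sits on top of the universal rate.

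First I would record two elementary estimates. (i) A Wolfe-gap bound valid for \emph{every} $\delta$: since $\bm x^*,\bm x_t\in\mathcal D$, convexity gives $h(\bm x_t)\le \langle\nabla f(\bm x_t),\bm x_t-\bm x^*\rangle \le -g_t(\bm x_t) = \langle\nabla f(\bm x_t),\bm x_t\rangle + C\max_{S\in\mathbb M}\|\nabla f(\bm x_t)_S\|_2$; because each $S\in\mathbb M$ has $|S|\le s$ and each $\bm x_t$ is a convex combination of atoms of $\ell_2$-norm at most $C$ supported on such $S$, both terms are at most $C\sqrt s\,\|\nabla f(\bm x_t)\|_\infty$, so $h(\bm x_t)\le 2C\sqrt s\,\|\nabla f(\bm x_t)\|_\infty$. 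This reproduces the first entry of the $\min$ for $\delta\le\tfrac12$ and supplies it for $\delta>\tfrac12$. (ii) Under $\|\nabla f\|_\infty\le B$, the same atomwise argument gives $|\langle\bm x_i,\nabla f(\bm x_i)\rangle|\le C\sqrt s\,B$, hence $A_t\le 4(1-\delta)(t+2)C\sqrt s\,B + 8LC^2$, and applying (i) at $t=0$, $h(\bm x_0)\le 2C\sqrt s\,B$.

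Then the case split. If $\|\nabla f(\bm x_t)\|_\infty\le B/t^\nu$, estimate (i) immediately yields $h(\bm x_t)\le 2C\sqrt s\,B/t^\nu = \mathcal O(BC\sqrt s/t^\nu)$. Otherwise I fall back on the $P(\delta,\nu')$ branch of Theorem~\ref{thm:1:fw-dmo-i}, with $\nu'=2\delta$ for $\delta\le\tfrac12$ and $\nu'=1$ for $\delta>\tfrac12$: substituting the estimates from (ii), using $9^\delta\le 9$ and $(t+2)^{-2\delta}\le 1$, the $h(\bm x_0)$-term is $\mathcal O\!\big((1-\delta)C\sqrt s\,B\big)$; for $\delta>\tfrac12$ the $(t+2)$ inside $A_t$ cancels the $(t+2)^{\nu'}$, so the $A_t$-term is $\mathcal O\!\big((1-\delta)C\sqrt s\,B\big)$ up to an absorbed $\ln t$ and a vanishing $LC^2/t$; since $1/\delta\ge1$ and $1-\delta\le(1-\delta)/\delta$, everything is $\mathcal O\!\big(BC\sqrt s\,(1-\delta)/\delta\big)$, giving the second branch.

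The main obstacle is the regime $\delta\le\tfrac12$ in the ``otherwise'' case: there $\nu'=2\delta<1$, so $A_t/(t+2)^{\nu'}$ carries a factor $(t+2)^{1-2\delta}\ln t$ that need not stay bounded once $\langle\bm x_i,\nabla f(\bm x_i)\rangle$ fails to vanish. The fix is to note that in this regime one simultaneously still has $h(\bm x_t)\le 2C\sqrt s\,\|\nabla f(\bm x_t)\|_\infty\le 2C\sqrt s\,B=\mathcal O(BC\sqrt s)$, and $\mathcal O(BC\sqrt s)=\mathcal O\!\big(BC\sqrt s(1-\delta)/\delta\big)$ whenever $\delta$ is a fixed constant bounded away from $1$ — exactly the relevant regime for $\delta\le\tfrac12$ — so the two bounds together still deliver the stated dichotomy. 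Pinning down precisely which $\ln t$, $L$, and $\delta$-factors the $\mathcal O(\cdot)$ absorbs, and checking the atomwise $\sqrt s$ estimates for the iterates of Alg.~\ref{algo:approx-fw} (not merely for extreme points), is where the remaining care goes; the rest is substitution.
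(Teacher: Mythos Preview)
Your overall plan matches the paper's: both use the direct convexity bound $h(\bm x_t)\le 2C\sqrt s\,\|\nabla f(\bm x_t)\|_\infty$ for the first branch, and for the second branch bound $|\langle\bm x_t,\nabla f(\bm x_t)\rangle|\le C\sqrt s\,B$ via the atomic decomposition of $\bm x_t$ and feed this into the recursion. Your shortcut for $\delta\le\tfrac12$ in the ``otherwise'' case --- cap at $2C\sqrt s\,B$ and note $(1-\delta)/\delta\ge1$ --- is in fact cleaner than the paper's explicit unrolling of that sub-case.

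The gap is in the $\delta>\tfrac12$ regime of the ``otherwise'' case. Plugging $A_t\le 4(1-\delta)(t+2)C\sqrt s\,B+8LC^2$ into the $P(\delta,1)$ branch of Theorem~\ref{thm:1:fw-dmo-i} gives
\[
\frac{\ln(t+1)+1}{t+2}\,A_t \;\ge\; 4(1-\delta)C\sqrt s\,B\,\bigl(\ln(t+1)+1\bigr),
\]
which \emph{grows} with $t$; you cannot ``absorb'' this $\ln t$ into a $t$-independent $\mathcal O\!\bigl(BC\sqrt s(1-\delta)/\delta\bigr)$. Switching to the other branch $\tfrac{3(1-\delta)h(\bm x_0)+A_t}{(2\delta-1)(t+2)}$ does give a constant, but with prefactor $1/(2\delta-1)$ rather than $1/\delta$, which blows up as $\delta\to\tfrac12^+$ and so does not match the stated bound uniformly in $\delta$. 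The paper avoids both issues by \emph{not} passing through the packaged $A_t/(t+2)^2$ form of Lemma~\ref{lemma:currence-inequality}: it keeps the error term in the recursion as $\tfrac{2(1-\delta)\sqrt s\,M_i C}{i+2}+\tfrac{8LC^2}{(i+2)^2}$, unrolls directly, and bounds $\sum_{i=0}^t (i+2)^{2\delta-1}/(t+3)^{2\delta}\le 1/(2\delta)$ by an integral to obtain $\mathcal O\!\bigl((1-\delta)\sqrt s\,BC/\delta\bigr)$ with no logarithm. That finer unrolling is the missing step; everything else in your proposal is sound.
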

\begin{remark}
 Our practical bound (\ref{thm:1:practical-rate}) reveals the essential advantage of FW-type algorithms: sparse solutions provide $\sqrt{s}$ better than $\sqrt{d}$. Fig. \ref{fig:least-square-k-support-norm} (left) also numerically substantiates our bound on the task of graph-structured linear regression. Fig. \ref{fig:least-square-k-support-norm} (right) illustrates the decay of $\|\nabla f(\bm x_t)\|_\infty$. Additionally, while the extra gradient decay assumption may seem unsatisfying, we observe it to be true in both our numerical simulations and real-world applications. In the other case, the gradient does not decay but assuming properly bounded; we have worst bound case $\mathcal{O}(BC\sqrt{s}(1-\delta)/\delta)$. Intuitively, the gradient decays when these NP-hard problems are ``easy''; otherwise, they are truly ``hard''. In the latter case, it is better to use the convergence rate over the relaxed ball. This assumption is therefore not needed in our set expansion scenario (Option II) below.
\end{remark} 

\begin{figure}
\centering
\includegraphics[width=.48\textwidth]{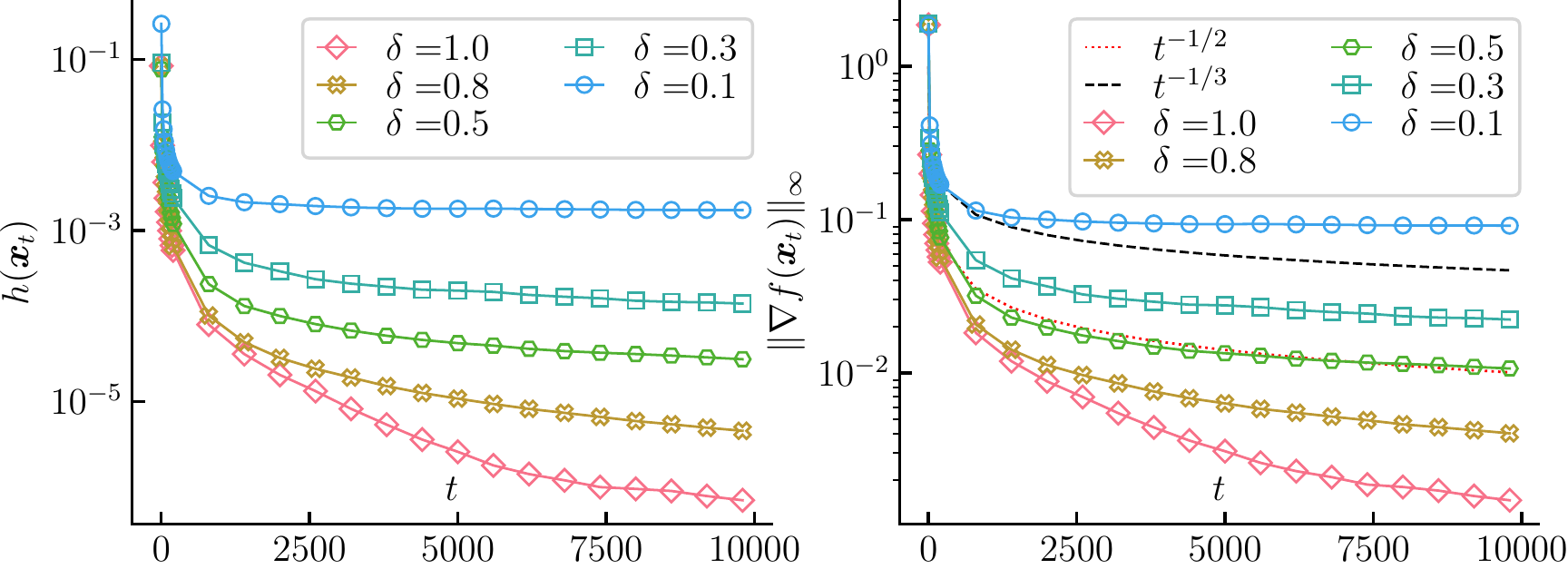}
\vspace{-5mm}
\caption{\textit{Left}: The primal error $h(\bm x_t)$ as a function of $t$ for \textsc{DMO-FW-I} with different $\delta$ on $s$-support norm ($s=5$) under the task of graph-structured linear sensing. \textit{Right}: The corresponding gradient norms as a function of $t$. More details of experimental setup can be found in Appendix \ref{appendix-k-support-norm}.\vspace{-5mm}}
\label{fig:least-square-k-support-norm}
\end{figure}

\paragraph{Expanding $\mathcal{D}$ to $\mathcal{D}/\delta$.} Even with the gradient decay assumption, the above analysis shows that even after infinite iterations, the approximation error could be large when $\delta$ is small. Alternatively, by allowing  $\bm x_t \in \mathcal{D}/\delta$, an expanded set of $\mathcal{D}$, we show that \textsc{DMO-FW-II} finds $\bm x_t$ in this \emph{relaxed} problem at rate $\mathcal{O}(1/t)$. To see this, note that $\bm x_{t+1}$ is a convex combination of approximate vectors $\tilde{\bm v}_i$. For each  $\tilde{\bm v}_t$, we enlarge its length to $\|\tilde{\bm v}_t\|/\delta$ so that $\langle \nabla f(\bm x_t), \tilde {\bm v}_t-\bm x_t\rangle$ is a lower bound of $\min_{\bm s \in \mathcal{D}/\delta} \langle \nabla f({\bm x}_t), \bm s - {\bm x}_t \rangle$. The rate of \textsc{DMO-FW-II} is stated in the following theorem.

\begin{theorem}[Convergence of \textsc{DMO-FW-II}]
\label{thm:2:fw-dmo-ii}
Under the same assumptions  as in Thm. \ref{thm:1:fw-dmo-i}, for any $t \geq 1$, the primal error of \textsc{DMO-FW-II} satisfies
\begin{equation}
h(\bm x_t) = f(\bm x_{t})-f(\bm x^*) \leq \frac{8 L C^2}{\delta^2 (t+2)}, \label{inequ:rate:thm2}
\end{equation}
where $\bm x_{t} \in \mathcal{D}/\delta$ and $\bm x^* \in \arg\min_{\bm x \in \mathcal{D}} f(\bm x)$.
\end{theorem}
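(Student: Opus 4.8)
The plan is to recognize \textsc{DMO-FW-II} as, in effect, the \emph{exact} Frank--Wolfe method run over the enlarged feasible set $\mathcal{D}/\delta$, and then invoke the textbook $\mathcal{O}(1/t)$ argument. The one non-routine ingredient is that the $1/\delta$-rescaling in Option~II upgrades the multiplicative IPO guarantee into a bona fide linear-minimization bound over $\mathcal{D}/\delta$, while the unrelaxed minimizer $\bm x^*$ remains feasible after the enlargement. So I would first set up feasibility and a diameter bound, then derive the relaxed gap inequality from Theorem~\ref{theorem:dmo-to-ipo}, and finally close the standard smoothness recursion by induction.

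\emph{Feasibility and diameter.} Since $\mathcal{D}$ is convex and contains the origin, $\delta\bm y\in\mathcal{D}$ for every $\bm y\in\mathcal{D}$ and $\delta\in(0,1]$, so $\mathcal{D}\subseteq\mathcal{D}/\delta$ and in particular $\bm x^*\in\mathcal{D}/\delta$. As $\mathcal{D}$ lies in the ball of radius $C$, the set $\mathcal{D}/\delta$ lies in the ball of radius $C/\delta$ and has diameter at most $2C/\delta$. Each iterate $\bm x_t$ is a convex combination of vectors $\tilde{\bm v}_i/\delta$ with $\tilde{\bm v}_i\in\mathcal{D}$, hence $\bm x_t\in\mathcal{D}/\delta$ and likewise $\tilde{\bm v}_t/\delta\in\mathcal{D}/\delta$, so $\|\tilde{\bm v}_t/\delta-\bm x_t\|_2\le 2C/\delta$. \emph{Relaxed gap inequality.} Applying Theorem~\ref{theorem:dmo-to-ipo} with $\bm z=\nabla f(\bm x_t)$, the vector $\tilde{\bm v}_t=-C\,\nabla f(\bm x_t)_{S_t}/\|\nabla f(\bm x_t)_{S_t}\|_2$ lies in $(\delta,\nabla f(\bm x_t),\mathcal{D})$-IPO, i.e. $\langle\nabla f(\bm x_t),\tilde{\bm v}_t\rangle\le\delta\min_{\bm s\in\mathcal{D}}\langle\nabla f(\bm x_t),\bm s\rangle$. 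Dividing by $\delta>0$ and using $\bm x^*\in\mathcal{D}$ gives $\langle\nabla f(\bm x_t),\tilde{\bm v}_t/\delta\rangle\le\min_{\bm s\in\mathcal{D}}\langle\nabla f(\bm x_t),\bm s\rangle\le\langle\nabla f(\bm x_t),\bm x^*\rangle$, whence by convexity of $f$,
\[
\langle\nabla f(\bm x_t),\tilde{\bm v}_t/\delta-\bm x_t\rangle\le\langle\nabla f(\bm x_t),\bm x^*-\bm x_t\rangle\le f(\bm x^*)-f(\bm x_t)=-h(\bm x_t).
\]

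\emph{Smoothness recursion and induction.} Substituting $\bm x_{t+1}=\bm x_t+\eta_t(\tilde{\bm v}_t/\delta-\bm x_t)$ into $L$-smoothness and using the two displays above,
\[
h(\bm x_{t+1})\le h(\bm x_t)+\eta_t\langle\nabla f(\bm x_t),\tilde{\bm v}_t/\delta-\bm x_t\rangle+\tfrac{L\eta_t^2}{2}\|\tilde{\bm v}_t/\delta-\bm x_t\|_2^2\le(1-\eta_t)h(\bm x_t)+\tfrac{2LC^2}{\delta^2}\eta_t^2.
\]
With $\eta_t=2/(t+2)$ this is precisely the classical Frank--Wolfe recursion with curvature constant $2LC^2/\delta^2=\tfrac12 L(2C/\delta)^2$, so a one-line induction ($h(\bm x_1)\le 2LC^2/\delta^2$ as base case, then $(t{+}1)(t{+}3)\le(t{+}2)^2$ in the step) gives $h(\bm x_t)\le 8LC^2/(\delta^2(t+2))$, which is \eqref{inequ:rate:thm2}; the claimed membership $\bm x_t\in\mathcal{D}/\delta$ was established above. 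The only genuine obstacle is the conceptual step in the second paragraph: the raw IPO bound only controls a $\delta$-fraction of the true FW gap on $\mathcal{D}$ (which is exactly why \textsc{DMO-FW-I} requires the delicate $\delta$-degraded analysis of Theorem~\ref{thm:1:fw-dmo-i}), and the rescaling buys a clean exact-FW gap inequality at the price of a $1/\delta^2$ blow-up of the diameter term — one just has to verify that $\bm x^*$ survives the enlargement, which it does by convexity of $\mathcal{D}$ and $\bm 0\in\mathcal{D}$.
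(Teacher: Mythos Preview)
Your proposal is correct and matches the paper's own proof essentially step for step: both use $L$-smoothness, the IPO bound scaled by $1/\delta$ to obtain $\langle\nabla f(\bm x_t),\tilde{\bm v}_t/\delta-\bm x_t\rangle\le\min_{\bm s\in\mathcal D}\langle\nabla f(\bm x_t),\bm s-\bm x_t\rangle$, convexity at $\bm x^*\in\mathcal D$, the diameter bound $\|\tilde{\bm v}_t/\delta-\bm x_t\|_2\le 2C/\delta$, and then the standard Jaggi-style induction with $\eta_t=2/(t+2)$. If anything you are more careful than the paper in spelling out why $\bm x_t\in\mathcal D/\delta$ and why $\mathcal D\subseteq\mathcal D/\delta$ (via $\bm 0\in\mathcal D$), points the paper leaves implicit.
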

\begin{remark}
The rate in (\ref{inequ:rate:thm2}) is comparable with that of the two variants in Matching Pursuit algorithms \citep{locatello2018matching},
which converge at $\mathcal{O}((\delta^2 (t+2))^{-1})$. To compare, the aforementioned variants must estimate parameter $L$, while \textsc{DMO-FW-II} $\delta$.
\end{remark}

\subsection{Convergence rate of \textsc{DMO-AccFW}}
Note that DMO-FW achieves $\mathcal{O}(1/t)$ convergence rate even when $\delta=1$. Inspired by \citet{garber21a}, we propose a variant FW-type method which uses an extreme point oracle, and can achieve a slightly better rate. The key idea  is that rather than computing the approximate DMO over $\bm z_t =  - \nabla f(\bm x_t)$, we instead compute the LMO over $\bm z_t =  - \left(\bm x_t - \tfrac{\nabla f(\bm x_t)}{L \eta_t}\right)$, which, if $\|\bm z_t\|_2$ is constant, is equivalent to finding the Euclidean projection of $\bm x_t - \tfrac{\nabla f(\bm x_t)}{L \eta_t}$ on $\mathcal D$, that is, it is a projected gradient method with the per-iteration cost of a FW method. This new variant is named \textsc{DMO-AccFW}.  With  proper assumptions, we prove that this variant converges at $\mathcal{O}(1/t^2)$. Before we state our main result, recall the quadratic growth condition.
\begin{definition}[$\mu$-Quadratic Growth Condition]
Let $f$ be continuous differentiable. $\mathcal{X}^* \triangleq \arg\min_{\bm x \in \mathcal{D}} f(\bm x)$. We say $f$ satisfies the quadratic growth condition on $\mathcal{D}$ if there exists a constant $\mu > 0$ such that
\begin{equation}
f(\bm x) - f(\bm x^*) \geq \tfrac{\mu}{2}
\| \bm x - [\bm x]_{\mathcal{X}^*}\|_2^2, \nonumber
\end{equation}
for all $\bm x \in \mathcal{D}$ where $[\bm x]_{\mathcal{X}^*} \triangleq \arg\min_{\bm z \in \mathcal{X}^*} \| \bm z - \bm x\|_2^2$.
\end{definition}

The quadratic growth condition is weaker than the strongly convex condition. For the remainder of this subsection we assume the convex function $f$ is $L$-smooth and satisfies a $\mu$-quadratic growth condition over $\mathcal D$ for option I, and over $\mathcal D/\delta$ for option II. We state our main result as follows:
\begin{theorem}
Assume further that $\bm x^*$ is on the boundary, i.e. $\bm x^* \in \mathcal{X}^*$ implies $\|\bm x^*\|_2 = C$. When $\delta=1$, for all $t \geq 1$, the primal error of \textsc{DMO-AccFW} satisfies
\begin{equation}
h(\bm x_{t}) \leq \frac{ 4 e^{4 L/\mu } h(\bm x_0)}{(t+2)^2}. \label{inequ:faster-rate}
\end{equation}
\end{theorem}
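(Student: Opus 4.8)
The plan is to derive a single-iteration contraction $h(\bm x_{t+1}) \le \bigl(1-\eta_t+\tfrac{L\eta_t^2}{\mu}\bigr)h(\bm x_t)$ and then telescope it with $\eta_t=2/(t+2)$. Fix $t$, write $\bm x^*:=[\bm x_t]_{\mathcal X^*}$ and $g_t:=\nabla f(\bm x_t)$. Because $\delta=1$, Options I and II coincide ($\mathcal D/\delta=\mathcal D$), and since $\bm x_{t+1}=(1-\eta_t)\bm x_t+\eta_t\tilde{\bm v}_t$ with $\tilde{\bm v}_t\in\mathcal D$, an induction keeps $\bm x_t\in\mathcal D$, so $\|\bm x_t\|_2\le C$ and quadratic growth applies at $\bm x_t$. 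The first thing I would record is that the boundary hypothesis makes $\bm x^*$ an \emph{extreme point} of $\mathcal D$: any feasible point of norm $<C$ lies in the relative interior of a constituent ball and cannot be extremal, so every extreme point has norm exactly $C$; conversely, strict convexity of $\|\cdot\|_2$ forces any $\bm x\in\mathcal D$ with $\|\bm x\|_2=C$ to be extremal. Hence $\|\bm x^*\|_2=\|\tilde{\bm v}_t\|_2=C$, an equality used crucially below.

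Next I would invoke exactness of the oracle. With $\delta=1$, Thm.~\ref{theorem:dmo-to-ipo} makes $\tilde{\bm v}_t$ an exact maximizer of $\bm v\mapsto\langle\bm x_t-\tfrac{1}{L\eta_t}g_t,\bm v\rangle$ over $\mathcal D$ (the ``Euclidean projection of $\bm x_t-\tfrac{1}{L\eta_t}g_t$'' described in Sec.~\ref{section:main-algorithms}), so $\langle\bm x_t-\tfrac{1}{L\eta_t}g_t,\ \tilde{\bm v}_t-\bm x^*\rangle\ge 0$, i.e. $\langle g_t,\tilde{\bm v}_t-\bm x^*\rangle\le L\eta_t\langle\bm x_t,\tilde{\bm v}_t-\bm x^*\rangle$. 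Adding the convexity bound $\langle g_t,\bm x^*-\bm x_t\rangle\le -h(\bm x_t)$ gives $\langle g_t,\tilde{\bm v}_t-\bm x_t\rangle\le L\eta_t\langle\bm x_t,\tilde{\bm v}_t-\bm x^*\rangle-h(\bm x_t)$. I then feed this into the descent lemma $h(\bm x_{t+1})\le h(\bm x_t)+\eta_t\langle g_t,\tilde{\bm v}_t-\bm x_t\rangle+\tfrac{L\eta_t^2}{2}\|\tilde{\bm v}_t-\bm x_t\|_2^2$ and use the elementary identity
\[
\langle\bm x_t,\tilde{\bm v}_t-\bm x^*\rangle+\tfrac12\|\tilde{\bm v}_t-\bm x_t\|_2^2=\tfrac12\|\bm x_t-\bm x^*\|_2^2+\tfrac12\bigl(\|\tilde{\bm v}_t\|_2^2-\|\bm x^*\|_2^2\bigr)=\tfrac12\|\bm x_t-\bm x^*\|_2^2,
\]
the last equality because $\|\tilde{\bm v}_t\|_2=\|\bm x^*\|_2=C$. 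This collapses everything to $h(\bm x_{t+1})\le(1-\eta_t)h(\bm x_t)+\tfrac{L\eta_t^2}{2}\|\bm x_t-\bm x^*\|_2^2$, and the $\mu$-quadratic growth inequality $\|\bm x_t-\bm x^*\|_2^2\le\tfrac{2}{\mu}h(\bm x_t)$ yields the promised recursion.

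Finally I would telescope. With $\eta_t=2/(t+2)$ the per-step factor equals $\tfrac{(i+1)^2}{(i+2)^2}\bigl(1+\tfrac{4L/\mu-1}{(i+1)^2}\bigr)$; the product of the first factors over $i=0,\dots,t-1$ is exactly $(t+1)^{-2}$, giving the $1/t^2$ decay, while the product of the correction factors is bounded by $\exp\!\bigl(\tfrac{4L}{\mu}\sum_i i^{-2}\bigr)=e^{O(L/\mu)}$ via $1+x\le e^x$ (handling separately the finitely many factors exceeding $1$) and $\sum_i i^{-2}<\infty$. Carrying the constants and bounding $(t+1)^{-2}$ in terms of $(t+2)^{-2}$ produces $h(\bm x_t)\le 4e^{4L/\mu}h(\bm x_0)/(t+2)^2$.

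The crux — and the place where the two extra hypotheses are genuinely needed — is the cancellation $\|\tilde{\bm v}_t\|_2^2-\|\bm x^*\|_2^2=0$ in the identity above: it requires both that the oracle be exact ($\delta=1$, so $\tilde{\bm v}_t$ really maximizes the shifted linear functional) and that $\bm x^*$ lie on the sphere $\|\bm x\|_2=C$. Without either, the residual $\|\tilde{\bm v}_t\|_2^2-\|\bm x^*\|_2^2$ reappears, the inner-product bound no longer converts cleanly to a squared distance, and one is thrown back to the $\mathcal O(1/t)$ analysis of Thm.~\ref{thm:2:fw-dmo-ii}; pinning down the precise constant $4e^{4L/\mu}$ in the telescoping is then the only remaining piece of (routine) bookkeeping.
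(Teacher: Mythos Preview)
Your proposal is correct and follows essentially the same route as the paper. The paper packages the key step as a standalone lemma showing $\eta_t\langle\tilde{\bm v}_t-\bm x_t,\nabla f(\bm x_t)\rangle+\tfrac{L\eta_t^2}{2}\|\tilde{\bm v}_t-\bm x_t\|_2^2\le\eta_t\langle[\bm x_t]_{\mathcal X^*}-\bm x_t,\nabla f(\bm x_t)\rangle+\tfrac{L\eta_t^2}{2}\|[\bm x_t]_{\mathcal X^*}-\bm x_t\|_2^2$ (proved by the same add-a-constant-and-complete-the-square argument you carry out via your algebraic identity), and telescopes via $1+x\le e^x$ directly rather than your $(i+1)^2/(i+2)^2$ factorization; both arrive at the same recursion $h(\bm x_{t+1})\le(1-\eta_t+L\eta_t^2/\mu)h(\bm x_t)$ and the same constant.
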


Standard FW method is well-known to have rate $\mathcal{O}(1/t^2)$ in general when the constraint set is a strongly convex set and $f$ is strongly convex \cite{garber2015faster}. Our key extension is to cases where $\mathcal D$ may not be strongly convex, as is often the case for obtaining sparsity. Note that quadratic growth condition is a slightly weaker condition than strongly convex; in particular, nonconvex functions may still have quadratic growth. When $\bm x^*$ is not on the boundary, we have the following general convergence rate.
\begin{corollary}
Denote $D_* =  \min_{\bm x^*\in\mathcal{X}^*}\| \bm x^*\|_2$. When $\delta=1$, for all $t \geq 1$, \textsc{DMO-AccFW-I} has the following convergence rate 
\begin{equation}
h(\bm x_{t}) \leq \min\left\{ \frac{3 L e^{2 L / \mu} (C^2 -D_*^2)}{t+2} + \frac{4 e^{4 L/\mu} h(\bm x_0)}{(t+2)^2}, Z_t\right\}, \nonumber
\end{equation}
where $Z_t = 2 L(5 C^2 - D_*^2) / (t+2)$.
\end{corollary}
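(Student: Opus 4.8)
The plan is to rerun the argument behind the preceding theorem and simply bookkeep the extra slack created when the optimum sits strictly inside the ball of radius $C$. Throughout fix $\delta=1$, so the DMO is exact: $S_t$ attains $\max_{S\in\mathbb{M}}\|(\bm z_t)_S\|_2$ and $\tilde{\bm v}_t$ is an extreme point of $\mathcal{D}$ minimizing the linear functional $\bm v\mapsto\langle\nabla f(\bm x_t)-L\eta_t\bm x_t,\bm v\rangle$ over $\mathcal{D}$ (this is the nearest extreme point property of \textsc{DMO-AccFW}; since a linear functional over a convex hull is minimized at an extreme point, the inequality $\langle\nabla f(\bm x_t)-L\eta_t\bm x_t,\tilde{\bm v}_t\rangle\leq\langle\nabla f(\bm x_t)-L\eta_t\bm x_t,\bm w\rangle$ holds for \emph{every} $\bm w\in\mathcal{D}$, not only for extreme points). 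Starting from the $L$-smoothness descent estimate $h(\bm x_{t+1})\leq h(\bm x_t)+\eta_t\big(\langle\nabla f(\bm x_t),\tilde{\bm v}_t-\bm x_t\rangle+\tfrac{L\eta_t}{2}\|\tilde{\bm v}_t-\bm x_t\|_2^2\big)$, I would test the optimality of $\tilde{\bm v}_t$ against $\bm w_t\triangleq[\bm x_t]_{\mathcal{X}^*}$, which in general is an \emph{interior} point of $\mathcal{D}$.

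The crucial step is converting that inner-product optimality into an inequality for the quadratic surrogate. Writing $\langle\bm x_t,\bm u\rangle=\tfrac12(\|\bm x_t\|_2^2+\|\bm u\|_2^2-\|\bm u-\bm x_t\|_2^2)$ for $\bm u\in\{\tilde{\bm v}_t,\bm w_t\}$, and using $\|\tilde{\bm v}_t\|_2=C$ together with $\|\bm w_t\|_2\geq D_*$, the optimality of $\tilde{\bm v}_t$ rearranges to
\[
\langle\nabla f(\bm x_t),\tilde{\bm v}_t-\bm x_t\rangle+\tfrac{L\eta_t}{2}\|\tilde{\bm v}_t-\bm x_t\|_2^2\;\leq\;\langle\nabla f(\bm x_t),\bm w_t-\bm x_t\rangle+\tfrac{L\eta_t}{2}\|\bm w_t-\bm x_t\|_2^2+\tfrac{L\eta_t}{2}(C^2-D_*^2).
\]
The last term is exactly the price of interiority; it vanishes when $\|\bm x^*\|_2=C$, which is why this reduces to the theorem. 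Now apply convexity, $\langle\nabla f(\bm x_t),\bm w_t-\bm x_t\rangle\leq f(\bm x^*)-f(\bm x_t)=-h(\bm x_t)$, and the $\mu$-quadratic growth condition, $\|\bm w_t-\bm x_t\|_2^2\leq\tfrac2\mu h(\bm x_t)$, to obtain the scalar recursion $h(\bm x_{t+1})\leq(1-\eta_t+\tfrac{L\eta_t^2}{\mu})h(\bm x_t)+\tfrac{L\eta_t^2}{2}(C^2-D_*^2)$.

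With $\eta_t=2/(t+2)$ this recursion splits into a homogeneous and a forcing part. The homogeneous part is precisely the one solved in the theorem's proof, so it contributes $4e^{4L/\mu}h(\bm x_0)/(t+2)^2$. For the forcing part I would bound the transition products $\prod_{i=j+1}^{t-1}(1-\eta_i+\tfrac{L\eta_i^2}{\mu})$ by $e^{2L/\mu}\big(\tfrac{j+2}{t+1}\big)^2$, using $1+u\leq e^u$ together with $\sum_{i>j}\eta_i^2\leq 4/(j+2)$ and $\sum_{i=j+1}^{t-1}\eta_i\geq 2\ln\tfrac{t+1}{j+2}$; since $\eta_j^2(j+2)^2=4$, the accumulated error $\sum_{j=0}^{t-1}\tfrac{L\eta_j^2}{2}(C^2-D_*^2)\prod_{i=j+1}^{t-1}(\cdots)$ telescopes to $O\big(Le^{2L/\mu}(C^2-D_*^2)\,t/(t+1)^2\big)$, which for $t\geq1$ is bounded by $3Le^{2L/\mu}(C^2-D_*^2)/(t+2)$. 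This gives the first branch of the $\min$.

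Finally, the bound $Z_t$ comes from rerunning the same chain but replacing the quadratic-growth estimate by the trivial diameter bound $\|\bm w_t-\bm x_t\|_2^2\leq(2C)^2$ (valid for any two points of $\mathcal{D}$, with no growth assumption), which yields $h(\bm x_{t+1})\leq(1-\eta_t)h(\bm x_t)+\tfrac{L\eta_t^2}{2}(5C^2-D_*^2)$, and then, by the standard one-line induction using $(t+1)(t+3)\leq(t+2)^2$, $h(\bm x_t)\leq 2L(5C^2-D_*^2)/(t+2)=Z_t$. Taking the minimum of the two bounds completes the proof. The one genuinely delicate point is the conversion step: recognizing that the nearest extreme point guarantee is an optimality of a \emph{linear} functional over $\mathcal{D}$ (hence usable against the non-extreme point $[\bm x_t]_{\mathcal{X}^*}$), and that the resulting norm mismatch $C^2-\|[\bm x_t]_{\mathcal{X}^*}\|_2^2$ is exactly the new $O(1/t)$ term once bounded uniformly by $C^2-D_*^2$; everything afterward is constant-sensitive but routine bookkeeping.
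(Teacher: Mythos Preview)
Your proposal is correct and follows essentially the same route as the paper's proof: the paper also derives the key inequality (its Lemma in the appendix, equation~\eqref{inequ:lemma:12:general-bound}) by completing the square exactly as you do via the polarization identity, tests it against $[\bm x_t]_{\mathcal{X}^*}$ to pick up the $\tfrac{L\eta_t^2}{2}(C^2-D_*^2)$ slack, combines convexity with $\mu$-quadratic growth to get the same scalar recursion, and unrolls with the same product bounds; the $Z_t$ branch likewise comes from replacing quadratic growth by the crude diameter bound $\|\bm x^*-\bm x_t\|_2^2\leq 4C^2$. Your bookkeeping of the constants ($3Le^{2L/\mu}$, $4e^{4L/\mu}$, and the $5C^2-D_*^2$ in $Z_t$) matches the paper's.
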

Similar to the practical rate of \textsc{DMO-FW-I}, the practical rate and the rate on the expanded set $\mathcal{D}/\delta$ are stated in Thm. \ref{thm:5.9} and \ref{thm:5.10}.
\begin{theorem}[Practical rate of \textsc{DMO-AccFW-I}]
\label{thm:5.9}
When $\delta \in (0,1)$ and $\| \nabla f(\bm x)\|_\infty \leq B$ , then \textsc{DMO-AccFW-I} admits
\begin{equation}
h(\bm x_t) \leq \mathcal{O}\left( \sqrt{s}B C(1-\delta)/\delta \right). \nonumber
\end{equation}
Moreover, when $\| \nabla f(\bm x)\|_\infty \leq B/t^{\nu}$ with $\nu \in (0,1]$, we have
\begin{equation}
h(\bm x_t) \leq \mathcal{O}\left(\sqrt{s} B C/ t^\nu \right). \nonumber
\end{equation}
\end{theorem}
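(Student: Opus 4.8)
The plan is to follow the blueprint of Corollary~\ref{corollary:practical-bound}, i.e. the practical rate of \textsc{DMO-FW-I}: first derive a one-step recursion for \textsc{DMO-AccFW-I}; then show that when $\delta<1$ this recursion takes the inhomogeneous form
\[
h(\bm x_{t+1})\ \le\ (1-c\,\delta\,\eta_t)\,h(\bm x_t)\ +\ (1-\delta)\,\eta_t\,E_t\ +\ \mathcal O(L\eta_t^2 C^2),\qquad c>0,
\]
with an error term $E_t$ of order $\sqrt s\,C\,\|\nabla f(\bm x_t)\|_\infty$; and finally solve the recursion in the two gradient regimes. Since $\delta<1$ already rules out the quadratic-growth acceleration of the $\delta=1$ rate, one expects \textsc{DMO-AccFW-I} to behave here exactly like \textsc{DMO-FW-I}, which is why the target bound coincides with that of Corollary~\ref{corollary:practical-bound}.

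For the recursion I would start from $L$-smoothness along the update direction, $h(\bm x_{t+1})\le h(\bm x_t)+\eta_t\langle\nabla f(\bm x_t),\tilde{\bm v}_t-\bm x_t\rangle+\tfrac{L\eta_t^2}{2}\|\tilde{\bm v}_t-\bm x_t\|_2^2$, and bound the last term by $2L\eta_t^2C^2$ using $\|\tilde{\bm v}_t\|_2=C$ together with the fact that every Option-I iterate stays in $\mathcal D$ (a convex combination of DMO outputs). That same fact gives, via Cauchy--Schwarz/Jensen over the $s$-sparse atoms, $\|\bm x_t\|_1\le\sqrt s\,\|\bm x_t\|_2\le\sqrt s\,C$, hence $|\langle\bm x_t,\nabla f(\bm x_t)\rangle|\le\sqrt s\,C\,\|\nabla f(\bm x_t)\|_\infty$, and likewise for the inner products of $\tilde{\bm v}_t$ and $\bm x^*$ with $\nabla f(\bm x_t)$. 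To control the linear term I would invoke the $\delta$-approximate IPO guarantee of Theorem~\ref{theorem:dmo-to-ipo}, applied to the \textsc{DMO-AccFW} lookahead vector $\bm z_t=-(\bm x_t-\nabla f(\bm x_t)/(L\eta_t))$ against the reference point $\bm x^*\in\mathcal D$; clearing the $1/(L\eta_t)$ scaling and using convexity ($\langle\nabla f(\bm x_t),\bm x^*-\bm x_t\rangle\le-h(\bm x_t)$) turns this into $\langle\nabla f(\bm x_t),\tilde{\bm v}_t-\bm x_t\rangle\le-c\delta\,h(\bm x_t)+(1-\delta)E_t+\mathcal O(L\eta_tC^2)$, where the slack $E_t$ collects inner products of $\bm x_t,\tilde{\bm v}_t,\bm x^*$ with $\nabla f(\bm x_t)$, hence $E_t=\mathcal O(\sqrt s\,C\,\|\nabla f(\bm x_t)\|_\infty)$. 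Substituting gives the displayed recursion, with $E_t=\mathcal O(\sqrt s\,C\,B)$ under $\|\nabla f(\bm x_t)\|_\infty\le B$ and $E_t=\mathcal O(\sqrt s\,C\,B/t^\nu)$ under $\|\nabla f(\bm x_t)\|_\infty\le B/t^\nu$.

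Finally, I would solve the recursion by the usual weighted-telescoping argument for $\eta_t=2/(t+2)$: multiply through by the integrating factor $\prod_{j\le t}(1-c\delta\eta_j)^{-1}\asymp t^{2c\delta}$ and sum. In the constant-gradient case the homogeneous part contracts to $o(1)$, the particular solution settles at $\mathcal O\!\big((1-\delta)E_t/(c\delta)\big)=\mathcal O(\sqrt s\,BC(1-\delta)/\delta)$, and the $\mathcal O(L\eta_t^2C^2)$ residual sums to $\mathcal O(1/t)$, giving the first bound; in the decaying-gradient case the same computation makes the $t^{-\nu}$ error term dominate (for $\nu\le1$), yielding $h(\bm x_t)=\mathcal O(\sqrt s\,BC/t^\nu)$. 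The main obstacle is the middle step: unlike \textsc{DMO-FW}, the vector fed to the oracle in \textsc{DMO-AccFW} carries a $1/(L\eta_t)$ factor, so a priori the $(1-\delta)$ slack could grow with $t$ through $\|\bm z_t\|=\Theta(\|\nabla f(\bm x_t)\|/(L\eta_t))$; the key observation is that this factor always appears multiplied by the FW step $\eta_t$, and $\eta_t\cdot\tfrac1{L\eta_t}=\tfrac1L$ is $t$-independent, while the remaining $\eta_t$-proportional pieces are absorbed into the $\mathcal O(L\eta_t^2C^2)$ term or vanish, leaving $E_t$ of the claimed order. A secondary technical point, already flagged after Theorem~\ref{thm:1:fw-dmo-i}, is that the contraction coefficient $c\delta\eta_t$ vanishes as $t\to\infty$, so the naive $\delta=1$ telescoping does not apply; the weighted sum above is the remedy, possibly requiring the same $\delta\le\tfrac12$ versus $\delta>\tfrac12$ case split used in Theorem~\ref{thm:1:fw-dmo-i}.
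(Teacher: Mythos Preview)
Your plan is essentially the paper's: derive a one-step recursion with a $(1-\delta)\eta_t$ inhomogeneous term governed by $\langle\bm x_t,\nabla f(\bm x_t)\rangle$, bound that inner product by $\sqrt s\,C\,\|\nabla f(\bm x_t)\|_\infty$ using the atomic structure of $\mathcal D$, and then sum. Your ``key observation'' that the $1/(L\eta_t)$ in $\bm z_t$ cancels against the FW step $\eta_t$ is exactly what makes the slack $Q_t$ in the paper collapse to $(1-\delta)\big(\tfrac{L\eta_t^2}{2}\|\bm x_t\|_2^2-\eta_t\langle\bm x_t,\nabla f(\bm x_t)\rangle\big)$, with the dangerous $\|\nabla f(\bm x_t)\|_2^2/(2L)$ terms cancelling identically.

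Two remarks. First, your derivation is actually a bit cleaner than the paper's: the paper completes the square on the IPO inequality and then invokes the $\mu$-quadratic-growth assumption to absorb $\|\bm x^*-\bm x_t\|_2^2$ into the contraction factor, producing an extra $e^{O(L/\mu)}$ constant that is silently hidden in the final $\mathcal O(\cdot)$. Your route---bounding $\|\tilde{\bm v}_t-\bm x_t\|_2^2\le 4C^2$ and $|\langle\bm x_t,\tilde{\bm v}_t\rangle|,|\langle\bm x_t,\bm x^*\rangle|\le C^2$ directly---avoids quadratic growth altogether, which is appropriate since the stated bound does not depend on $\mu$.

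Second, for the decaying-gradient case your telescoping does \emph{not} always yield $t^{-\nu}$: when $2\delta<\nu$ the weighted sum $\tfrac{1}{t^{2\delta}}\sum_i i^{2\delta-1-\nu}$ is $O(t^{-2\delta})$, not $O(t^{-\nu})$. The clean fix is the one-line convexity bound you already have the ingredients for, namely
\[
h(\bm x_t)\ \le\ |\langle\nabla f(\bm x_t),\bm x_t\rangle|+|\langle\nabla f(\bm x_t),\bm x^*\rangle|\ \le\ 2\sqrt s\,C\,\|\nabla f(\bm x_t)\|_\infty\ \le\ 2\sqrt s\,C\,B/t^{\nu},
\]
valid for every $\delta$ and every iterate in $\mathcal D$. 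This is also how the paper (tacitly) handles that case, cf.\ the $\min\{2C\sqrt s\|\nabla f(\bm x_t)\|_\infty,\cdot\}$ branch in Theorem~\ref{thm:1:fw-dmo-i}.
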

\begin{theorem}[Convergence of \textsc{DMO-AccFW-II}]
\label{thm:5.10}
When $\delta \in (0,1)$, then \textsc{DMO-AccFW-II} finds $\bm x_t \in \mathcal{D} /\delta$ and admits 
\begin{equation}
h(\bm x_t) \leq \frac{4 e^{4 L /\mu}}{(t+3)^2} h(\bm x_0) + \frac{28 L^2 (C^2/\delta^2 - D_*^2)}{ 5 \mu (t+3)}.
\end{equation}
\label{thm:4:acc-fw-dmo-ii}
\end{theorem}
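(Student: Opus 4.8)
The plan is to adapt the argument behind the $\mathcal{O}(1/t^2)$ rate of \textsc{DMO-AccFW} (the boundary case with $\delta=1$) in two ways: run it over the \emph{relaxed} set $\mathcal D/\delta$, which absorbs the multiplicative DMO error exactly as in \textsc{DMO-FW-II}, and drop the boundary assumption on the optimum, which is what produces the additional $\mathcal O(1/t)$ term proportional to $C^2/\delta^2 - D_*^2$. Write $\bm y_t \triangleq \bm x_t - \tfrac{1}{L\eta_t}\nabla f(\bm x_t)$ for the gradient-descent point. By Thm.~\ref{theorem:dmo-to-ipo}, the vector $\tilde{\bm v}_t$ produced by \textsc{DMO-AccFW} satisfies $\|\tilde{\bm v}_t\|_2 = C$, $\operatorname{supp}(\tilde{\bm v}_t)\in\mathbb M$ and $\langle\bm y_t,\tilde{\bm v}_t\rangle \ge \delta\max_{\bm s\in\mathcal D}\langle\bm y_t,\bm s\rangle$, i.e.\ it is a $\delta$-approximate nearest extreme point of $\mathcal D$ to $\bm y_t$; hence $\tilde{\bm v}_t/\delta\in\mathcal D/\delta$ (so $\bm x_{t+1}\in\mathcal D/\delta$) and $\langle\bm y_t,\tilde{\bm v}_t/\delta\rangle \ge \max_{\bm s\in\mathcal D}\langle\bm y_t,\bm s\rangle \ge \langle\bm y_t,\bm x^*\rangle$ for any $\bm x^*\in\mathcal D$.

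First I would write the one-step $L$-smoothness bound for the Option-II update, $h(\bm x_{t+1}) \le h(\bm x_t) + \eta_t\langle\nabla f(\bm x_t),\tilde{\bm v}_t/\delta-\bm x_t\rangle + \tfrac{L\eta_t^2}{2}\|\tilde{\bm v}_t/\delta-\bm x_t\|_2^2$, and complete the square in $\tfrac{1}{L\eta_t}\nabla f(\bm x_t)$ so the right-hand side becomes $h(\bm x_t) - \tfrac{1}{2L}\|\nabla f(\bm x_t)\|_2^2 + \tfrac{L\eta_t^2}{2}\|\tilde{\bm v}_t/\delta - \bm y_t\|_2^2$. The key geometric step is
\[
\|\tilde{\bm v}_t/\delta - \bm y_t\|_2^2 \;\le\; \|\bm x^* - \bm y_t\|_2^2 + \bigl(C^2/\delta^2 - D_*^2\bigr)
\]
for $\bm x^* = [\bm x_t]_{\mathcal X^*}$, obtained by expanding both squares: the cross terms obey $\langle\bm y_t,\tilde{\bm v}_t/\delta\rangle \ge \langle\bm y_t,\bm x^*\rangle$ (just noted), and the norm terms contribute $\|\tilde{\bm v}_t/\delta\|_2^2 - \|\bm x^*\|_2^2 = C^2/\delta^2 - \|\bm x^*\|_2^2 \le C^2/\delta^2 - D_*^2$. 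Substituting this, re-expanding $\|\bm x^*-\bm y_t\|_2^2$ in terms of $\bm x_t$ and $\nabla f(\bm x_t)$, the two $\tfrac{1}{2L}\|\nabla f(\bm x_t)\|_2^2$ terms cancel and convexity turns $\eta_t\langle\nabla f(\bm x_t),\bm x^*-\bm x_t\rangle$ into $-\eta_t h(\bm x_t)$, leaving
\[
h(\bm x_{t+1}) \;\le\; (1-\eta_t)\,h(\bm x_t) + \tfrac{L\eta_t^2}{2}\|\bm x_t - [\bm x_t]_{\mathcal X^*}\|_2^2 + \tfrac{L\eta_t^2}{2}\bigl(C^2/\delta^2 - D_*^2\bigr).
\]

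Next I would invoke the $\mu$-quadratic growth condition (over $\mathcal D/\delta$, as assumed for Option II), $\|\bm x_t-[\bm x_t]_{\mathcal X^*}\|_2^2 \le \tfrac{2}{\mu}h(\bm x_t)$, producing the scalar recursion $h_{t+1} \le \bigl(1-\eta_t+\tfrac{L\eta_t^2}{\mu}\bigr)h_t + \tfrac{L\eta_t^2}{2}\bigl(C^2/\delta^2-D_*^2\bigr)$. With $\eta_t = 2/(t+2)$ the contraction factor is $\gamma_t = \tfrac{t}{t+2}\bigl(1+\tfrac{4L/\mu}{t(t+2)}\bigr)$, so the partial products $\prod_i\gamma_i$ telescope to $\Theta(1/t^2)$ times $\exp\!\bigl(\tfrac{4L}{\mu}\sum_i\tfrac{1}{i(i+2)}\bigr)\le e^{\Theta(L/\mu)}$; summing the particular term $\sum_i\bigl(\prod_{j>i}\gamma_j\bigr)\tfrac{L\eta_i^2}{2}(C^2/\delta^2-D_*^2)$ against these partial products gives a term of order $\tfrac{L^2}{\mu}(C^2/\delta^2-D_*^2)/t$ (the extra $L/\mu$ coming from the $\eta_0=1$ factor $\gamma_0=L/\mu$ together with a crude bound on the exponential sum). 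Collecting the two parts and rounding constants yields $h(\bm x_t) \le \tfrac{4e^{4L/\mu}}{(t+3)^2}h(\bm x_0) + \tfrac{28L^2(C^2/\delta^2-D_*^2)}{5\mu(t+3)}$, the shift $t+2\to t+3$ being a cosmetic consequence of treating the first step separately.

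The step I expect to be the main obstacle is the geometric estimate above. The point is that $\tilde{\bm v}_t/\delta$, while only a $\delta$-approximate nearest extreme point \emph{inside} $\mathcal D/\delta$, is nonetheless an \emph{exact} maximizer of $\langle\bm y_t,\cdot\rangle$ over the smaller set $\mathcal D$ — this is precisely where the $1/\delta$ rescaling pays off and why the comparison point must be taken in $\mathcal D$ rather than in $\mathcal D/\delta$ — and the residual norm gap must be measured against $D_*$, not $C/\delta$, so that it vanishes exactly when the (relaxed) optimum lies on the boundary of $\mathcal D/\delta$, recovering the pure $\mathcal O(1/t^2)$ rate. The remaining work — the completion of squares and the telescoping of the recursion with the standard $1+x\le e^x$ bound — is routine, modulo careful tracking of the $L/\mu$ dependence into the exponential constant.
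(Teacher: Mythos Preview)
Your proposal is correct and follows essentially the same route as the paper's proof: $L$-smoothness for the Option-II update, completing the square in $\bm y_t=\bm x_t-\tfrac{1}{L\eta_t}\nabla f(\bm x_t)$, the key geometric comparison $\|\tilde{\bm v}_t/\delta-\bm y_t\|_2^2 \le \|[\bm x_t]_{\mathcal X^*}-\bm y_t\|_2^2 + (C^2/\delta^2-D_*^2)$ obtained from the scaled IPO inequality $\langle\bm y_t,\tilde{\bm v}_t/\delta\rangle\ge\langle\bm y_t,\bm x^*\rangle$ together with $\|\bm x^*\|_2\ge D_*$, then convexity, $\mu$-quadratic growth, and unrolling the recursion via $1+x\le e^x$. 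Your expansion of the two squares into cross terms plus norm terms is exactly the manipulation the paper performs, just stated more transparently; the only cosmetic discrepancy is your attribution of the extra $L/\mu$ in the second term to $\gamma_0$, which does not actually enter the partial products $\prod_{j>i}\gamma_j$ for $i\ge 0$---the paper simply carries a loose factor through that sum, so you should not worry about matching the constant $28/5$ precisely.
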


The practical rate of \textssc{DMO-AccFW} shown in Thm \ref{thm:5.9} is the same as of \textsc{DMO-FW} as both two methods share the same DMO operator in worst case. For more comparisons between the above rates and the standard FW method, see Table \ref{tab:convergence-rate} in the Appendix \ref{appendix:tab:convergence-rate}.

\subsection{Case study: Graph-structured linear sensing}

The goal of graph-structured linear sensing is to recover a graph-sparse model $\tilde{\bm x}^*$ using fewer measurements than $d$. Specifically, measurements are generated as 
\[
\bm y = \langle \bm A, \tilde{\bm x}^* \rangle + \bm e, \qquad \bm e \sim \mathcal{N}(\bm 0, \sigma^2 \bm I_d).
\]
The sensing matrix $\bm A \in \mathbb{R}^{n\times d}$ is Gaussian random ($a_{i j } \sim \mathcal{N}(0, 1/\sqrt{n})$ independently). Our goal is to recover $\tilde {\bm x}^*$ using regression
$ f(\bm x) := \left\| \bm A \bm x - \bm y \right\|^2 / 2$ subject to graph-structured constraint $\mathcal{D}$. The following corollary provides the parameter estimation error bound for graph-structured linear sensing problem.

\begin{corollary}[High probability parameter estimation]
Let $h(\bm x_t)$ be primal error for \textsc{DMO-FW} or \textsc{DMO-AccFW}. The estimation error of the graph-structured linear sensing problem admits
\begin{equation}
\| \tilde{\bm x}^* - \bm x_t \|_2 \leq \sqrt{\frac{2 \sqrt{s}C \| \nabla f(\tilde{\bm x}^*)\|_\infty}{\mu}} + \sqrt{\frac{2 h(\bm x_t)}{\mu}}, \label{estimation-error}
\end{equation}
Moreover, with large enough $n$, there exists an universal constant $c$ such that with high probability:
\[
\| \tilde{\bm x}^* - \bm x_t \|_2 \leq \left( \sqrt{\frac{4 \sigma C \sqrt{s \log d / n}}{\frac{1}{2} - \frac{c s \log d}{n}}} + \sqrt{\frac{2 h(\bm x_t)}{\frac{1}{2} - \frac{c s \log d}{n}}} \right).
\]
\end{corollary}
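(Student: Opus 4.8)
The plan is to split the recovery error into an optimization term controlled by the primal error $h(\bm x_t)$ and a statistical term controlled by $\|\nabla f(\tilde{\bm x}^*)\|_\infty$, and then to specialize the two constants to a Gaussian sensing matrix. Throughout I take $\tilde{\bm x}^*$ to be feasible (it is graph-sparse with $\|\tilde{\bm x}^*\|_2\le C$; for Option~II replace $\mathcal D,C$ by $\mathcal D/\delta,C/\delta$ everywhere), write $\bm x^*\in\arg\min_{\bm x\in\mathcal D}f$ for the constrained optimum, and note that for our strictly (restricted-)convex quadratic the constrained minimizer is unique, so $[\bm x]_{\mathcal X^*}=\bm x^*$ for every $\bm x$. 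The argument then starts from $\|\tilde{\bm x}^*-\bm x_t\|_2\le\|\tilde{\bm x}^*-\bm x^*\|_2+\|\bm x^*-\bm x_t\|_2$ and bounds each summand via the $\mu$-quadratic growth condition.

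For the optimization summand, quadratic growth at $\bm x_t\in\mathcal D$ gives $h(\bm x_t)=f(\bm x_t)-f(\bm x^*)\ge\tfrac{\mu}{2}\|\bm x_t-\bm x^*\|_2^2$, hence $\|\bm x^*-\bm x_t\|_2\le\sqrt{2h(\bm x_t)/\mu}$. For the statistical summand, quadratic growth at $\tilde{\bm x}^*\in\mathcal D$ gives $\|\tilde{\bm x}^*-\bm x^*\|_2^2\le\tfrac{2}{\mu}\bigl(f(\tilde{\bm x}^*)-f(\bm x^*)\bigr)$, so it remains to bound the suboptimality $f(\tilde{\bm x}^*)-f(\bm x^*)$ by $\sqrt s\,C\,\|\nabla f(\tilde{\bm x}^*)\|_\infty$. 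This I do by convexity, $f(\tilde{\bm x}^*)-f(\bm x^*)\le\langle\nabla f(\tilde{\bm x}^*),\tilde{\bm x}^*-\bm x^*\rangle$, combined with the extreme-point description in \eqref{equ:lmo-solution}, which shows $\max_{\bm v\in\mathcal D}|\langle\bm z,\bm v\rangle|=C\max_{S\in\mathbb M}\|\bm z_S\|_2\le C\sqrt s\,\|\bm z\|_\infty$ since every $S\in\mathbb M$ obeys $|S|\le s$; using also $0\in\mathcal D$ and the optimality inequality $\langle\nabla f(\bm x^*),\bm v-\bm x^*\rangle\ge0$ tightens this to $\sqrt s\,C\,\|\nabla f(\tilde{\bm x}^*)\|_\infty$. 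Adding the two square-root bounds through the triangle inequality yields \eqref{estimation-error}.

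To obtain the high-probability statement, specialize to $f(\bm x)=\tfrac12\|\bm A\bm x-\bm y\|_2^2$. Because $\nabla^2 f=\bm A^\top\bm A$ is constant, the quadratic-growth constant $\mu$ over $\mathcal D$ is the restricted eigenvalue $\inf\{\bm z^\top\bm A^\top\bm A\bm z/\|\bm z\|_2^2\}$ over the difference directions of $\mathcal D$; since $\|\bm x\|_1\le\sqrt s\,C$ on $\mathcal D$ (each generator is $s$-sparse with $\ell_2$-norm at most $C$, and $\|\cdot\|_1$ is convex), those directions satisfy $\|\bm z\|_1\le 2\sqrt s\,\|\bm z\|_2$, a cone of Gaussian width $\mathcal O(\sqrt{s\log d})$, so the standard restricted-isometry/restricted-eigenvalue bound for Gaussian $\bm A$ (e.g.\ via Gordon's escape-through-the-mesh, or a model-based RIP) gives $\mu\ge\tfrac12-\tfrac{cs\log d}{n}$ w.h.p.\ once $n\gtrsim s\log d$. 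For the gradient term, $\nabla f(\tilde{\bm x}^*)=\bm A^\top(\bm A\tilde{\bm x}^*-\bm y)=-\bm A^\top\bm e$; conditioned on $\bm e$ each coordinate is mean-zero Gaussian with variance $\mathcal O(\|\bm e\|_2^2/n)$, and $\|\bm e\|_2^2\lesssim n\sigma^2$ by $\chi^2$-concentration, so a union bound over the $d$ coordinates gives $\|\nabla f(\tilde{\bm x}^*)\|_\infty\le 2\sigma\sqrt{\log d/n}$ w.h.p.\ (in the paper's normalization). Substituting both constants into \eqref{estimation-error} reproduces the claimed inequality.

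I expect the deterministic statistical step to be the main obstacle. The quadratic-growth hypothesis is anchored at the minimizer set $\mathcal X^*$ of $f$ over $\mathcal D$, not at the ground truth $\tilde{\bm x}^*$, so $h(\bm x)\ge\tfrac{\mu}{2}\|\bm x-\tilde{\bm x}^*\|_2^2$ is not available directly; the mismatch must be routed through $\bm x^*$ and then controlled by the suboptimality $f(\tilde{\bm x}^*)-f(\bm x^*)$, and obtaining the sharp factor $\sqrt s\,C$ there (rather than a looser multiple) requires careful use of \eqref{equ:lmo-solution}, of $0\in\mathcal D$, and of the optimality of $\bm x^*$. A secondary point is checking that the relevant difference directions really lie in the $\mathcal O(\sqrt s)$-effectively-sparse cone, since that is exactly where the model assumption $|S|\le s$ for all $S\in\mathbb M$ enters the restricted-eigenvalue estimate, and keeping track of the $C\mapsto C/\delta$ replacement in Option~II.
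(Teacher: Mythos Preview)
Your argument is correct and reaches the stated bound, but the route differs from the paper's. You split via the triangle inequality through the constrained minimizer $\bm x^*$ and invoke quadratic growth twice (once at $\bm x_t$, once at $\tilde{\bm x}^*$), then control the statistical piece through the suboptimality $f(\tilde{\bm x}^*)-f(\bm x^*)$ and the support function of $\mathcal D$. The paper instead works directly between $\tilde{\bm x}^*$ and $\bm x_t$: it writes (what is really a restricted-strong-convexity inequality at $\tilde{\bm x}^*$, though it is labelled ``quadratic growth'') $f(\bm x_t)\ge f(\tilde{\bm x}^*)+\langle\nabla f(\tilde{\bm x}^*),\bm x_t-\tilde{\bm x}^*\rangle+\tfrac{\mu}{2}\|\bm x_t-\tilde{\bm x}^*\|_2^2$, rearranges, uses $f(\bm x^*)\le f(\tilde{\bm x}^*)$ to replace $f(\bm x_t)-f(\tilde{\bm x}^*)$ by $h(\bm x_t)$, and then bounds $|\langle\nabla f(\tilde{\bm x}^*),\bm x_t\rangle|$ by unrolling $\bm x_t$ as the explicit convex combination of the past DMO outputs $\tilde{\bm v}_i$, each $s$-sparse with $\ell_2$-norm $C$. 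Your approach is more modular---it applies to any feasible $\bm x_t$, not only the specific iterates, and it uses quadratic growth exactly as defined (anchored at $\mathcal X^*$); the paper's is more direct but leans on the algorithmic structure of the iterates and on a slightly stronger pointwise inequality. One small caveat: your ``tightening'' from $2\sqrt s\,C$ to $\sqrt s\,C$ via the first-order optimality of $\bm x^*$ is not clearly justified, since that condition involves $\nabla f(\bm x^*)$ rather than $\nabla f(\tilde{\bm x}^*)$; however, the paper's own derivation also produces $2\sqrt s\,C$ and then states $\sqrt s\,C$, so this is a shared cosmetic constant issue rather than a gap in your plan.
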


\section{Empirical results}

We empirically evaluate our methods through the task of graph-structured linear regression problem over several graph-structured images. Our code and datasets will be made publically available upon publication, and is included in the submission. Our goal is to address two questions: \textbf{Q1.} Does \textsc{DMO-AccFW} speed up \textsc{DMO-FW}? \textbf{Q2.} How does the approximation quality and efficiency compare with baseline methods? \footnote{More experimental details are in Appendix \ref{appendix:section:experiments}.} Our code and datasets are accessible at \url{https://github.com/baojian/dmo-fw}.

\textit{DMO-AccFW converges faster than DMO-FW.} To answer \textbf{Q1}, our empirical results as illustrated in Fig. \ref{fig:test_mnist_dmo_5.0} on task of graph-structured sparse recovery clearly demonstrate that \textsc{DMO-AccFW} converges faster than \textsc{DMO-FW} and empirically matches the rate $\mathcal{O}(1/t^2)$ in (\ref{inequ:faster-rate}) and estimation error matches $\mathcal{O}(1/t)$ in (\ref{estimation-error}).

\textit{\textsc{DMO-AccFW} converges to a good local solution.} PGD-based methods and MP-based methods have been widely used on the task of sparse recovery. All methods shown linear convergence rate when training samples are sufficiently large. However, when the number of training samples are much less than $p$ (extremely challenging to recover the original signal), our method demonstrates sensing superiority. \begin{figure}[H]
\includegraphics[width=.5\textwidth]{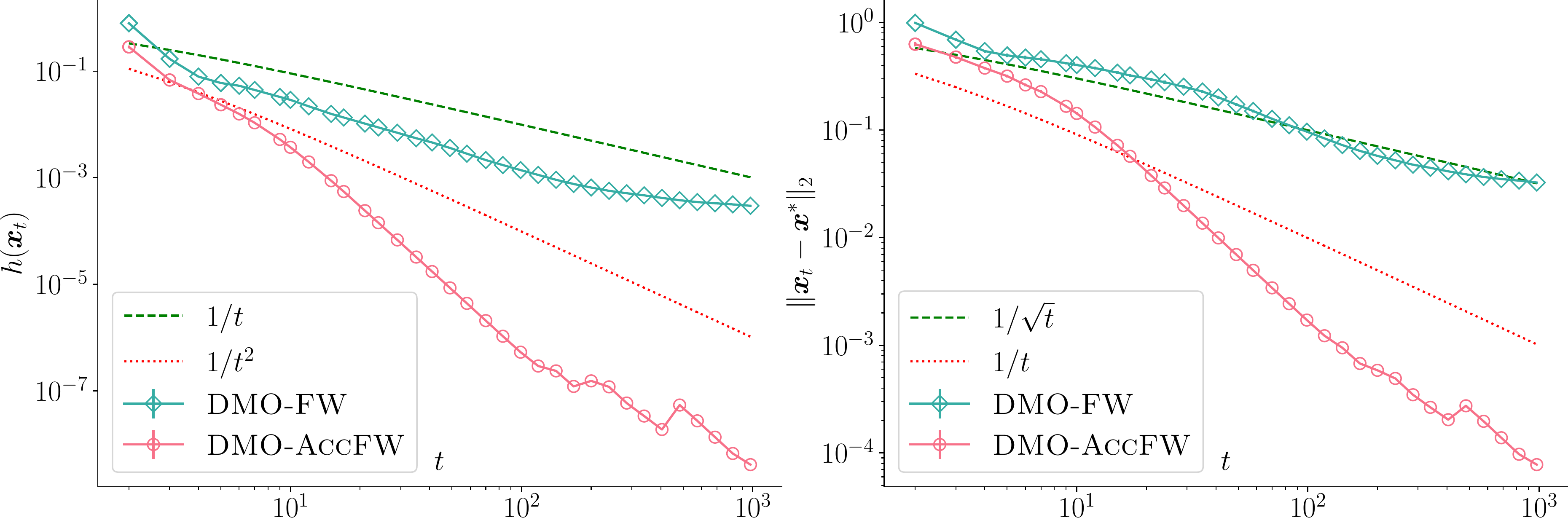}
\vspace{-5mm}
\caption{\textit{Left}: Primal error $h({\bm x}_t)$ as a function of $t$ on task of the graph-structured sparse recovery of MNIST[7]. \textit{Right}: The estimation error as a function of $t$. \vspace{-3mm}}
\label{fig:test_mnist_dmo_5.0}
\end{figure} Specifically, we pick $n=2.5 \cdot |\operatorname{supp}(\bm x^*)|$ samples. We run each experiment for 20 trials, and compare our methods against the generalized MP (\textsc{Gen-MP}) discussed in \citet{locatello2018matching} where each constant curvature is estimated by the maximal eigenvalue of $\bm A^\top \bm A$, \textsc{CoSAMP} \cite{needell2009cosamp},  \textssc{GraphCoSAMP} \cite{hegde2015nearly}, and \textsc{Graph-IHT} \cite{hegde2016fast} (the PGD-based method). The DMO we used is the head projection of \citet{hegde2015nearly}. We use Option \textsc{I} for \textsc{DMO-AccFW} and simply set $L=1$.

\begin{figure}[H]
\includegraphics[width=.48\textwidth]{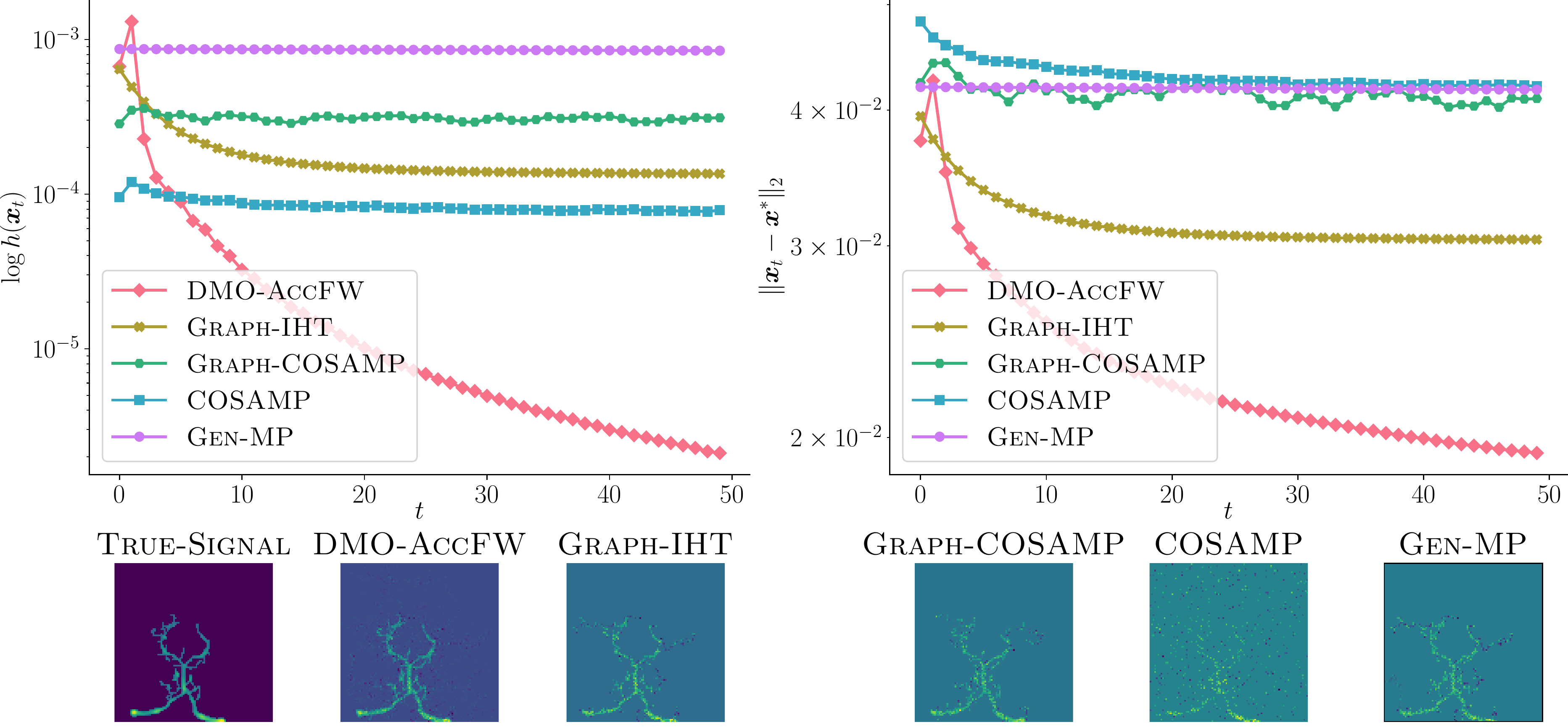}

\caption{The performance of methods on the task of graph-structured linear regression task. \textit{Top}: 
\textsc{DMO-AccFW} vs other baseline methods on primal error $ h(\bm x_t)$ (\textit{left}) and variable suboptimality $\|\bm x_t-\bm x^*\|_2$ (\textit{right}) as a function of $t$. 
\textit{Bottom}:  
Recovered sparse images $\bm x_t$ vs truth image (bottom right) after $50$ iterations. }
\label{fig:sparse-image-recovery}
\end{figure} 
 
The convergence comparison is shown in Fig. \ref{fig:sparse-image-recovery}. Compared with the PGD-based method \textsc{GraphIHT}, \textsc{DMO-AccFW} converges faster to a good local optimal while \textsc{GraphIHT} appears to be stuck in a local minimum. Three MP-based methods also failed to cleanly recover the original signal. Our \textsc{DMO-AccFW} is the fastest one to converge to a good local minimum at very early stage. The bottom recovered image also demonstrate that \textsc{DMO-AccFW} returns sparse solution at early stage. The efficiency of \textsc{DMO-AccFW} has been demonstrated in Table \ref{experiments:run-time} where it is several times faster than baselines because baselines either have multiple projections or need to have minimization step per-iteration.

\begin{center}
\vspace{-5mm}
\begin{table}[!th]
\caption{The comparison of run time of all methods.}
\centering
\begin{tabular}{p{0.15\textwidth}|p{0.2\textwidth}} \toprule
Method & Run time (seconds) \\\hline 
\textsc{GraphCoSAMP} & 662.10$\pm$252.21 \\\hline 
\textsc{CoSAMP} & 531.57$\pm$45.47 \\\hline 
\textsc{GenMP} & 661.68$\pm$177.65 \\\hline 
\textsc{GraphIHT} & 445.89$\pm$22.63 \\\hline 
\textsc{DMO-AccFW} & \textbf{87.61$\pm$23.39} \\\bottomrule
\end{tabular}
\label{experiments:run-time}
\end{table}
\end{center}

\section{Discussion and Conclusion}

We have studied the approximate FW-type methods for GSCO problems. We first demonstrate that there exist adversarial examples such that two popular inexact LMOs are at least as hard to compute as the exact LMO. Instead, we consider an inexact DMO which is equivalent to an approximation on the inner product rather than the gap, and prove that the inexact DMO is equivalent to the inexact IPO. The standard FW admits $\mathcal{O}((1-\delta)\sqrt{s}/\delta)$ and our accelerated version has rate $\mathcal{O}(1/t^2)$ with a proper condition. We also prove that a relaxed version of FW admits $\mathcal{O}(L C^2 /(\delta^2 t))$ for general convex functions where the iterates $\bm x_t$ may be infeasible, and converge to a relaxed set.

One weakness of the present work is that our established bound depends on the decay of the gradient with the fixed step size strategy, which is observed in practice but difficult to show theoretically. If $\bm x^*$ is on the boundary, then $\|\nabla f(\bm x^*)\|_\infty$ may never decay to 0, and the proposed methods converge to an error neighborhood. We also do not consider line search, away steps, or fully-corrective versions, and limit our attention to the traditional fixed step size strategies. Finally, our initial experiments 
indicate that inexact FW-type methods are attractive for this type of problems and one is encouraged to find faster methods based on fully-corrective or other methods.

\section{Acknowledgement}

Both authors thank the anonymous reviewers for their helpful comments. We would like to thank Steven Skiena for his support during the start of this project. The work of Baojian Zhou is partially supported by the startup funding from Fudan University.

\bibliography{references}
\bibliographystyle{icml2022}

\newpage
\appendix
\onecolumn

Section \ref{appendix:proofs} provides all missing proofs. Section \ref{appendix:adv-examples} presents the adversarial examples at non-optimal points $\bm x_t$. Section \ref{appendix-k-support-norm} gives the experimental details of Fig. \ref{fig:least-square-k-support-norm}. Section \ref{appendix:section:experiments} provides more experimental results on graph-structured sparse recovery problem. Finally, in Section \ref{appendix:section:dmo}, we present two DMOs for $\mathbb{M}(\mathbb{G}, s, g)$ and briefly discuss other DMOs. Our code, datasets, and results are also provided in the supplementary material and will be made available on publish.

\section{Proofs}
\label{appendix:proofs}

\subsection{Proof of Theorem \ref{thm:np-hard}}

Before proving the NP-hardness result, we first introduce \textit{the group model selection (GMS) problem} \cite{baldassarre2016group}. Notations of this problem are:
Let $\tau(\bm y)$ be the indicator vector where
\[
\tau(\bm y)_i = \begin{cases} 1 & \text{if } y_i \ne 0,\\ 
0 & \text{otherwise.}
\end{cases}
\]

Given the ground set $[N]$, the group structure $\mathbb{B} \triangleq \{B_1, B_2,\ldots, B_T\}$ is a collection of index sets with
\[
B_i \subseteq [N], \quad |B_i| \leq b_i, \quad \cup_{B_i \in \mathbb{B}} B_i = [N].
\]
The group indicator vector $\bm \omega$ denotes activity of elements in $\mathbb{B}$, that is, $\omega_i = 1$ if $B_i$ is active 0 otherwise. To encode the group structure $\mathbb{B}$, an indicator matrix $\bm A \in \mathbb{B}^{N\times T}$ is defined where each entry $a_{i j} = 1$ if $i \in B_j$ and 0 otherwise. Based on this definition, $\bm A \bm \omega \geq \tau(\bm y)$ means that for each nonzero $y_i$, there at least one active group in $\mathbb{B}$ covers $y_i$. Given any $\bm y\in \mathbb{R}^n$, a best $\gamma$-group sparse approximation $\hat{\bm y}$ is given by
\begin{equation}
\hat{\bm y} \in \argmin{\bm z \in \mathbb{R}^N} \left\{ \| \bm y - \bm z\|_2^2 : \| \bm z\|_{ \mathbb{B},0} \leq \gamma \right\}, \text{ where } \| \bm z\|_{\mathbb{B}, 0} \triangleq \min_{\bm \omega \in \mathbb{B}^T} \left\{ \sum_{j=1}^T \omega_j : \bm A \bm \omega \geq \tau(\bm z) \right\},
\end{equation}
where $\|\bm z \|_{ \mathbb{B},0}$ expresses the minimal number of active groups that cover $\bm z$.
% BJ: There are some typos. I fixed it.

\begin{problem}[The group model selection (GMS) problem \cite{baldassarre2016group}]
Let $[N]$ be the ground set. Assume the group structure $\mathbb{B} := \{B_1,B_2,\ldots, B_T\}$ where each $B_i \subseteq [N]$ and $\cup_{B_i \in \mathbb{B}} B_i = [N]$.
Given any input $\bm y \in \mathbb{R}^N$, a $\gamma$-group cover for its $\gamma$-group sparse approximation is expressed as follows
\begin{equation}
\mathcal{S}(\gamma,\hat{\bm y}) \in \argmax{\mathcal{S} \subseteq \mathbb{B}} \left\{ \| \bm y_I \|_2^2 : I = \cup_{S_i \in \mathcal{S}} S_i, |\mathcal{S}| \leq \gamma \right\}, \label{equ:18}
\end{equation}
where the $\gamma$-group cover $\mathcal{S}(\gamma, \hat{\bm y})$ is a group cover for $\hat{\bm y}$ with at most $\gamma$ groups, that is
\begin{equation}
\mathcal{S}(\gamma, \hat{\bm y}) = \left\{ B_i \in \mathbb{B} : \bm \omega \in  \mathbb{B}^{T}, \omega_i = 1, \bm A \bm \omega \geq \tau(\hat{\bm y}), \sum_{j=1}^T \omega_j \leq \gamma \right\}.
\end{equation}
The GMS problem is to find such cover $\mathcal{S}(\gamma,\hat{\bm y})$ for any given $\bm y$ and $\gamma$.
\end{problem}

\begin{lemma}[The NP-hardness of the GMS problem \cite{baldassarre2016group}]  Given the ground set $[N]$ and $\gamma$ is a positive integer. Suppose $\bm y \in \mathbb{R}^N$ and $\mathbb{B} \in \mathcal{P}([N])$. The group model selection problem is NP-hard.
\end{lemma}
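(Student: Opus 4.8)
The plan is to prove NP-hardness by exhibiting the GMS problem as an instance of the \emph{weighted maximum coverage problem} (WMCP), a classical NP-hard problem. The crucial observation is that the objective in \eqref{equ:18} is \emph{modular on the union}: since $\|\bm y_I\|_2^2 = \sum_{i \in I} y_i^2$ and $I = \bigcup_{S_i \in \mathcal{S}} S_i$, each covered coordinate $i$ contributes its weight $w_i \triangleq y_i^2 \geq 0$ exactly once, no matter how many chosen groups contain it. Thus maximizing $\|\bm y_I\|_2^2$ over group covers $\mathcal{S}$ with $|\mathcal{S}| \leq \gamma$ is precisely the task of choosing at most $\gamma$ of the sets $B_1,\ldots,B_T$ so as to maximize the total weight $\sum_{i \in I} w_i$ of the elements they cover.

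First I would set up the reduction \emph{into} GMS. Given any WMCP instance — a ground set $[N]$, sets $B_1,\ldots,B_T$, nonnegative weights $w_1,\ldots,w_N$, and budget $\gamma$ — I construct a GMS instance with the same ground set, the same group structure $\mathbb{B}=\{B_1,\ldots,B_T\}$, the same budget $\gamma$, and input vector $\bm y$ with $y_i = \sqrt{w_i}$. This is a polynomial-time construction, and by the modularity observation above the optimal $\gamma$-group cover of the GMS instance has objective value exactly equal to the optimal coverage weight of the WMCP instance. Hence any algorithm solving GMS also solves WMCP.

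Next I would establish the NP-hardness of WMCP, which then transfers. Taking all $w_i = 1$ already specializes to the \emph{unweighted} maximum coverage problem, whose decision version (``can $\gamma$ sets cover at least $k$ elements?'') is NP-complete. A standard way to see this is a reduction from \textsc{Set Cover}: a \textsc{Set Cover} instance admits a cover by $\gamma$ sets if and only if the maximum-coverage optimum at budget $\gamma$ equals $N$, so an efficient solver for maximum coverage would decide \textsc{Set Cover}, one of Karp's NP-complete problems. Composing the reductions \textsc{Set Cover} $\le_p$ (weighted) maximum coverage $\le_p$ GMS yields the claim.

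The main obstacle is really a matter of care rather than depth: one must verify that the $\ell_2^2$ objective genuinely collapses to an additive coverage weight and introduces no interaction terms across overlapping groups — this is exactly where the fact that the objective depends on $\mathcal{S}$ only through the \emph{union} $I$ is essential. I would also confirm that the reduction preserves the optimization-to-decision correspondence, so that exact NP-hardness (and not merely hardness of approximation) transfers. As a bonus, because GMS coincides with weighted maximum coverage, the known $1-1/e$ inapproximability threshold carries over as well, matching the remark invoked later in the paper; but for the lemma itself only the exact-hardness reduction above is needed.
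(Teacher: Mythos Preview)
Your proposal is correct: the reduction from weighted maximum coverage via $y_i=\sqrt{w_i}$ works because $\|\bm y_I\|_2^2=\sum_{i\in I}y_i^2$ depends on $\mathcal S$ only through the union $I$, and the subsequent chain \textsc{Set Cover} $\le_p$ maximum coverage $\le_p$ GMS is standard.

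As for comparison with the paper: the paper does \emph{not} supply its own proof of this lemma. It is stated with attribution to \cite{baldassarre2016group} and then used as a black box in the reduction establishing Theorem~\ref{thm:np-hard}. The only hints the paper gives toward the underlying argument are the remark following Theorem~\ref{thm:np-hard} (the GMS problem ``contains all instances of the maximum weighted coverage problem'') and the passage at the end of Section~\ref{subsection:adv-example} invoking the $1-1/e$ threshold of \cite{feige1998threshold}. Your approach is precisely the one these remarks gesture toward, so there is no divergence to discuss---you have simply filled in what the paper leaves to the cited reference.
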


To prove the NP-hardness of graph-structured LMO, we make a reduction from the GMS problem to it. The main observation is that, given any instance of GMS problem, one can construct $\mathbb{M}$ such that $\mathcal{S}(\gamma,\hat{\bm y})$ can be recovered from $\operatorname{supp}(\bm v_t)$.

\begin{reptheorem}{thm:np-hard}[Hardness of graph-structured LMO]
Given $C >0, a = 0, b = 1$, convex differentiable  $f:\mathbb{R}^d\rightarrow \mathbb{R}$, and the graph-structured model $\mathbb{M}$, the computation of the graph-structured LMO defined in (\ref{equ:lmo}) is NP-hard. 
\end{reptheorem}
\begin{proof}

\begin{table}[!ht]
\centering
\caption{The comparison of parameter configurations between two problems.}
\begin{tabular}{p{0.15\textwidth}|p{0.3\textwidth}|p{0.4\textwidth}}
\toprule
  & The group model selection problem & The computation of graph-structured LMO \\
\hline 
Ground set & $[N] \triangleq \{1, 2, \ldots, N\}$ & $[d] \triangleq \{1, 2, \ldots, d\}$ \\
\hline 
Input vector & $\bm y$ & $a \bm x_t + b \nabla f(\bm x_t)$, $a, b \in \mathbb{R}^d$ and $\nabla f(\bm x_t) \in \mathbb{R}^d$ \\
\hline 
Structure model & $\mathbb{B} = \{B_1, B_2, \ldots, B_T\}$ & $\mathbb{M} = \{ S_1, S_2, \ldots, S_{M}\}$ \\
\hline 
Group size & $\gamma$ & $g$ \\
\hline 
Solution & $\hat{\bm y}_I = \bm y_I$ where $I \subseteq \mathbb{B}$ with $|I| = \gamma$ & $\bm v_t = \frac{-C\cdot \nabla f(\bm x_t)_{S^*}}{\| \nabla f(\bm x_t)_{S^*}\|_2}$ where $S^* \in \mathbb{M}$\\\bottomrule
\end{tabular}
\label{tab:model-reduction}
\end{table}	
For any given instance of the GMS problem defined in Table \ref{tab:model-reduction}, one can find an instance of the graph structured LMO problem: Let $d = N$, fix $a=0, b= 1$ and let the convex differentiable function $f(\bm x) = \bm x^\top \bm y$, so that $\nabla f(\bm x_t) = \bm y$. The rest is to construct $\mathbb{M}$ so that $\operatorname{supp}(\bm v_t)$ corresponds to $I$. Follow a similar argument presented in Sec. \ref{section:hardness-result}, we have
\begin{align*}
\min_{\bm s \in \mathcal{D}(C, \mathbb{M})} \left\langle \nabla f(\bm x_t), \bm s \right\rangle &=  \min_{\bm s \in \operatorname{conv}\{\cup_{S \in \mathbb{M}} \mathcal{B}(S, C)\}} \left\langle \bm y, \bm s \right\rangle \\
&=  \min_{\bm s \in \cup_{S \in \mathbb{M}} \mathcal{B}(S, C)} \left\langle \bm y, \bm s \right\rangle \\
&= \min_{\bm s \in \mathcal{B}(S^*, C)} \left\langle \bm y_{S^*}, \bm s  \right\rangle, \nonumber
\end{align*}
where $S^*$ is an optimal support set and $\mathcal{B}(S,C) = \{ \operatorname{supp}(\bm x) = S, \|\bm x\|_2 \leq C \}$. Therefore, we have the following equivilent representation of $\bm v_t$
\begin{equation}
\bm v_t = - \frac{C \cdot \bm y_{S^*}}{\| \bm y_{S^*}\|_2}, S^* \in \argmax{S \in \mathbb{M}} \| \bm y_S \|_2^2, \nonumber
\end{equation}
where we let $C=1$. Now let $\mathbb{M} = \{S_i : S_i = \cup_{Q_j \in \mathcal{S}} Q_j, |\mathcal{S}| \leq \gamma , \mathcal{S} \subseteq \mathbb{B}\}$. That is, $\mathbb{M}$ is problem space of (\ref{equ:18}). Notice further that, given the underlying graph $\mathbb{G}(\mathbb{V},\mathbb{E})$, one can define $\mathbb{M}(s,g=\gamma) \triangleq \mathbb{M}$ with $s=\max_i |S_i|$.
Therefore, with this specific configuration, the solution $S^*$ of the graph-structured LMO is $I$. One can immediately recover $\mathcal{S}(\gamma,\hat{\bm y})$ from $I$ by using the fact that $I = S^* = \cup_{Q_j \in \mathcal{S}} Q_j$ for a specific $\mathcal{S}$.
\end{proof}

\subsection{Proof of Theorem \ref{theorem:dmo-to-ipo}}
\begin{reptheorem}{theorem:dmo-to-ipo}
Given the set $\mathcal{D}$ and suppose $S \in (\delta,\bm z, \mathcal{D})$-DMO. Define the approximate supporting vector $\tilde{\bm v}_t \triangleq -C\cdot\bm z_S / \|\bm z_S\|_2$.  Then, $\tilde{\bm v}_t \in (\delta, \bm z, \mathcal{D})$-IPO.
\end{reptheorem}
\begin{proof}
Since $S$ is a support returned by $(\delta, \bm z, \mathcal{D})$-DMO and $\tilde{\bm v}_t = \frac{- C}{\| \bm z_S\|_2}\bm z_S$, we have the following
\begin{align}
\left\langle \bm z, \tilde{\bm v}_t \right\rangle &= \left\langle \bm z, \frac{-C}{\|{\bm z}_S\|_2} \bm z_S \right\rangle \nonumber\\
&= - C \cdot \|\bm z_S\|_2 \nonumber\\
&\leq - C \cdot \delta \max_{S^\prime \in \mathbb{M}} \| \bm z_{S^\prime}\|_2 \nonumber\\
&= \delta \cdot \min_{S^\prime \in \mathbb{M}} \| \bm z_{S^\prime}\|_2\cdot (-C), \label{thm:4.3-01}
\end{align}
where the last inequality due to the fact that $S$ is in $(\delta,\bm z, \mathcal{D})\operatorname{-DMO}$. As defined in (\ref{def:lmo}), in the rest, we shall prove that $\left\langle \bm z, \tilde{\bm v}_t \right\rangle \leq \delta \cdot \min_{\bm s \in \mathcal{D}} \left\langle \bm z, \bm s \right\rangle$. Recall
 $\mathcal{D} = \operatorname{conv}(\cup_{I \in \mathbb{M}} \{\bm w \in \mathbb{R}^d: \operatorname{supp}(\bm w) \subseteq I, \| \bm w\|_2 \leq C \})$. Denote the ball induced by $I$ as $\mathcal{B}(I) := \{ \bm x\in \mathbb{R}^d: \operatorname{supp}(\bm x) \subseteq I, \| \bm w\|_2 \leq C\}$. Notice that $\bm s$ be must in the set of extreme points.  We have
\begin{align*}
\min_{\bm s \in \mathcal{D}} \left\langle \bm z, \bm s \right\rangle &= \min_{\bm s \in \operatorname{conv}(\cup_{I \in \mathbb{M}} \mathcal{B}(I))} \left\langle \bm z, \bm s \right\rangle \\
&= \min_{\bm s \in \cup_{I \in \mathbb{M}} \mathcal{B}(I)} \left\langle \bm z, \bm s \right\rangle \\
&= \min_{\bm s \in \mathcal{B}(I^*)} \left\langle \bm z_{I^*}, \bm s  \right\rangle, \nonumber
\end{align*}
where we denote $I^*$ as the support of $\bm s$ in $\mathbb{M}$. By Cauchy-Schwarz inequality, we always have $- \|\bm z_{I^*}\|_2 \cdot\|\bm s\|_2 \leq \left\langle \bm z_{I^*}, \bm s\right\rangle$. When $\bm s = - C\cdot\bm z_{I^*} / \| \bm z_{I^*}\|_2$, it attains the minimal value $-C \cdot \| \bm z_{I^*}\|_2$. Therefore, we have
\begin{align*}
\min_{\bm s\in\mathcal{D}} \langle \bm z, \bm s\rangle = \| \bm z_{I^*}\|_2 \cdot (- C) \text{ for some } I^* \in \mathbb{M}.
\end{align*}
As $I^* \in \mathbb{M}$, we continue (\ref{thm:4.3-01}) to have
\begin{align*}
\langle \bm z, \tilde{\bm v}_t \rangle &\leq \delta \cdot \min_{S^\prime \in \mathbb{M}} \| \bm z_{S^\prime}\|_2 (-C) \\
&\leq \delta \cdot \| \bm z_{I^*}\|_2 \cdot (- C) \\
&= \delta \cdot \min_{\bm s \in \mathcal{D}} \langle \bm z, \bm s\rangle.
\end{align*}
The above inequality indicates that given $S$ that satisfies DMO property, then $\tilde{\bm v}_t$ defined based on $S$ is a solution of IPO operator. We prove the lemma.
\end{proof}
\begin{remark}
Theorem \ref{theorem:dmo-to-ipo} provides an easier way to find a solution of IPO. It also indicates IPO operator and DMO operator are equivalent. That is, for an existing $\bm v_t \in (\delta, \bm z, \mathcal{D})$-IPO, one can find $\operatorname{supp}(\bm v_t) \in (\delta, \bm z, \mathcal{D})$. In case of $k$-support norm, the proof of exact equivalence (i.e., $\delta=1$) appears in the Proposition 2 of \citet{mcdonald2016fitting}, Section 2.1 of \citet{argyriou2012sparse},  and Lemma 2 of \citet{jacob2009group} where calculating the dual norm is equivalent to solving LMO. In particular, let $I_k$ be the top-$k$ largest magnitudes of $\bm u$, then any $\bm s$ such that $\langle \bm u, \bm s \rangle = \|\bm u_{I_k}\|$ will be a solution of LMO, with 
$\bm s =- \bm u_{I_k} / \|\bm u_{I_k}\|_2$.
\end{remark}

\subsection{Proof of Theorem \ref{thm:1:fw-dmo-i}}

Denote the primal error $h(\bm x_{t}) := f(\bm x_{t}) - f(\bm x^*)$ and assume the step size $\eta_t = 2/(t+2)$. Note that the solution $\bm x_{\bar{t}}$ always has $h(\bm x_{\bar{t}}) \leq h(\bm x_t)$. We assume $f$ is $L$-smooth and denote
$\bm x^* \in \arg\min_{\bm x\in \mathcal{D}} f(\bm x)$ as a minimizer of (\ref{def:problem}). We begin to introduce a key lemma as the following

\begin{lemma}
Given $h:\mathbb{R}^d\rightarrow \mathbb{R}^+$ and the following first-order non-homogeneous recurrence relation
\begin{equation}
h(\bm x_{t+1}) \leq \left(1-\frac{2\delta}{t+2}\right) h(\bm x_{t}) + \frac{A_t}{(t+2)^2}, \label{inequ:non-homo-recurr}
\end{equation}
where $\delta \in (0,1]$, $t \geq 0$, and $A_t \in \mathbb{R}^+$. Then, $\forall t \geq 1$, we have
\begin{equation}
h(\bm x_{t}) \leq \begin{dcases*}
\frac{(1-\delta)9^\delta}{(t+2)^{2\delta}} \cdot h(\bm x_0) + \frac{\ln(t+1) + 1}{(t+2)^{2\delta}} \cdot A  & if $\delta \in (0,\frac{1}{2}]$ \\
\min \left\{ \frac{(1-\delta)9^\delta}{(t+2)^{2\delta}} \cdot h(\bm x_0) + \frac{\ln(t+1) + 1}{t+2} \cdot A,  \frac{3(1-\delta)h(\bm x_0) + A}{(2\delta-1)(t+2)}\right\}  & if $\delta \in (\tfrac{1}{2},1]$,
\end{dcases*}
\end{equation}
where $A$ be such that $A_t \leq A$.
\label{lemma:currence-inequality}
\end{lemma}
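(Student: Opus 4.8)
\textbf{Proof plan for Lemma \ref{lemma:currence-inequality}.}
The plan is to unroll the recurrence \eqref{inequ:non-homo-recurr} explicitly and then bound the resulting product and sum separately. Write $c_t \triangleq 1 - \tfrac{2\delta}{t+2}$; iterating \eqref{inequ:non-homo-recurr} from $0$ to $t-1$ gives
\[
h(\bm x_t) \leq \Big(\prod_{i=0}^{t-1} c_i\Big) h(\bm x_0) + \sum_{j=0}^{t-1} \frac{A_j}{(j+2)^2} \prod_{i=j+1}^{t-1} c_i,
\]
and since $A_j \leq A$ it suffices to control $\Pi_{j,t} \triangleq \prod_{i=j+1}^{t-1} c_i$ for each $j$ (with $\Pi_{-1,t}$ being the full product multiplying $h(\bm x_0)$). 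First I would establish the bound $\prod_{i=j+1}^{t-1}\big(1-\tfrac{2\delta}{i+2}\big) \leq \big(\tfrac{j+3}{t+2}\big)^{2\delta}$, which is the standard telescoping estimate: using $1-x \leq e^{-x}$ one gets $\prod_{i=j+1}^{t-1} c_i \leq \exp(-2\delta \sum_{i=j+1}^{t-1}\tfrac{1}{i+2})$ and then $\sum_{i=j+1}^{t-1}\tfrac{1}{i+2} \geq \ln\tfrac{t+2}{j+3}$ by comparing the sum to an integral; the leading constant $9^\delta = 3^{2\delta}$ comes from the $j=-1$ term where $j+3 = 2$, rescaled — I would track the exact constant carefully here since the statement pins down $9^\delta h(\bm x_0)$.

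Next, plugging this into the sum, the $h(\bm x_0)$-term contributes $\leq \tfrac{(1-\delta)9^\delta}{(t+2)^{2\delta}} h(\bm x_0)$ — here the factor $(1-\delta)$ should come from the first step of the recursion, i.e.\ the $c_0 = 1-\tfrac{2\delta}{2} = 1-\delta$ factor being peeled off, so I would treat the base case $t=0,1$ separately to introduce it cleanly. For the summation term I would bound
\[
\sum_{j=0}^{t-1} \frac{A}{(j+2)^2}\Big(\frac{j+3}{t+2}\Big)^{2\delta} \leq \frac{A}{(t+2)^{2\delta}} \sum_{j=0}^{t-1} \frac{(j+3)^{2\delta}}{(j+2)^2} \leq \frac{C_\delta \cdot A}{(t+2)^{2\delta}} \sum_{j=0}^{t-1} \frac{1}{(j+2)^{2-2\delta}},
\]
using $(j+3) \leq \tfrac{3}{2}(j+2)$. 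Now the behavior splits on $\delta$: when $\delta \leq \tfrac12$ the exponent $2-2\delta \geq 1$, so $\sum_{j\geq 0} (j+2)^{-(2-2\delta)}$ is either a convergent series ($\delta<\tfrac12$) or the harmonic sum bounded by $\ln(t+1)+1$ ($\delta = \tfrac12$); either way it is $\leq \ln(t+1)+1$ up to the right constant, yielding the first branch. When $\delta > \tfrac12$, the exponent $2-2\delta < 1$, so $\sum_{j=0}^{t-1}(j+2)^{-(2-2\delta)} \leq \tfrac{(t+2)^{2\delta-1}}{2\delta-1}$ by an integral comparison; substituting gives the $\tfrac{A}{(2\delta-1)(t+2)}$ term and, combined with the $h(\bm x_0)$ contribution (now also $O((t+2)^{-1})$ since $2\delta > 1$), produces the second element of the $\min$ in the $\delta \in (\tfrac12,1]$ branch. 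The first element of that $\min$ (the $\ln(t+1)+1$ over $t+2$ form) follows from simply keeping the slower decay $(t+2)^{-1}$ instead of $(t+2)^{-2\delta}$ in the denominator of the sum term and using the harmonic bound on $\sum (j+2)^{-1} \geq \sum (j+2)^{-(2-2\delta)}$.

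The main obstacle I anticipate is purely bookkeeping on constants: getting exactly $9^\delta$, exactly $(1-\delta)$, and exactly $\ln(t+1)+1$ (rather than some looser $O(\cdot)$) requires being careful about where the recursion is truncated (peeling the $t=0$ step to expose $1-\delta$), which integral comparison is used (left vs.\ right Riemann sum), and the $(j+3)/(j+2)$-type slack. None of it is deep, but the stated inequality is tight enough that I would verify the base cases $t=1$ (and perhaps $t=2$) by hand to anchor the induction, then carry out the product/sum estimates above, and finally check that the claimed bound indeed dominates for all $t \geq 1$. A cleaner alternative to unrolling is a direct induction on $t$ with the claimed right-hand side as the hypothesis, but that tends to make the constant-chasing even more delicate, so I would prefer the explicit unrolling.
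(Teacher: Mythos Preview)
Your overall plan---unroll the recurrence, bound the product via $1-x\leq e^{-x}$ and an integral estimate, peel off $c_0=1-\delta$ for the $h(\bm x_0)$ term, then split the sum on $\delta$---matches the paper's argument for the $P(\delta,\cdot)$ bound. Two points, however, diverge from the paper and deserve attention.

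First, your derivation of the \emph{first} element of the $\min$ when $\delta\in(\tfrac12,1]$ has the inequality backwards: for $\delta>\tfrac12$ one has $2-2\delta<1$, so $(j+2)^{-(2-2\delta)}\geq (j+2)^{-1}$, and the harmonic bound does \emph{not} dominate the sum you need. The paper avoids this by observing that the whole summand $\frac{1}{(j+2)^2}\big(\tfrac{j+3}{t+3}\big)^{2\delta}$ is \emph{decreasing} in $\delta$ (since $\tfrac{j+3}{t+3}\leq 1$), so for $\delta\geq\tfrac12$ one may replace $2\delta$ by $1$ and then apply the same harmonic estimate $\sum_j \tfrac{j+3}{(j+2)^2}\leq \ln(t+2)+1$ used in the $\delta\leq\tfrac12$ case. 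The symmetric trick (the numerator sum $\sum (j+3)^{2\delta}/(j+2)^2$ is \emph{increasing} in $\delta$) is what the paper uses for $\delta\leq\tfrac12$, and it sidesteps your $(3/2)^{2\delta}$ slack entirely.

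Second, and more substantively, the \emph{second} element of the $\min$, namely $S(t,\delta)=\tfrac{3(1-\delta)h(\bm x_0)+A}{(2\delta-1)(t+2)}$, is \emph{not} obtained in the paper from the unrolled expression at all. Your unrolling route, after the $(j+3)\leq\tfrac32(j+2)$ step and the integral comparison, would produce $\tfrac{(3/2)^{2\delta}A}{(2\delta-1)(t+2)}$ at best, which strictly exceeds the claimed constant on $A$. The paper instead proves $h(\bm x_t)\leq S(t,\delta)$ by a \emph{separate direct induction on $t$}: check $t=1$ by hand, then show $(1-\tfrac{2\delta}{k+2})\tfrac{B}{k+2}+\tfrac{A}{(k+2)^2}\leq\tfrac{B}{k+3}$ with $B=\tfrac{3(1-\delta)h(\bm x_0)+A}{2\delta-1}$ via an elementary algebraic manipulation that crucially uses $\delta>\tfrac12$. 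This is precisely the alternative you dismissed as ``more delicate''---in fact it is the only way to land on the stated constant.
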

\begin{proof}
We directly expand the recurrence relation (\ref{inequ:non-homo-recurr}) as the following
\begin{align}
h(\bm x_{t+1}) &\leq \left(1-\frac{2\delta}{t+2}\right) h(\bm x_{t}) + \frac{A_t}{(t+2)^2} \nonumber\\
&\leq \left(1-\frac{2\delta}{t+2}\right) \left(1-\frac{2\delta}{t+1}\right) h(\bm x_{t-1}) + \left(1-\frac{2\delta}{t+2}\right)\left(\frac{A_{t-1}}{(t+1)^2}\right) +  \frac{A_t}{(t+2)^2} \nonumber\\
&\vdots  \nonumber\\
&\leq \prod_{i=0}^t\left(1-\frac{2\delta}{i+2}\right)\cdot h(\bm x_0) + \sum_{j=0}^{t-1}\left\{ \frac{A_j}{(j+2)^2} \prod_{k=j+1}^{t}\left(1-\frac{2\delta}{k+2}\right) \right\} + \frac{A_t}{(t+2)^2} \triangleq H(t,\delta), \label{inequ:lemma-a6-1}
\end{align}
where we define the above tight bound (\ref{inequ:lemma-a6-1}) as $H(t,\delta)$. In the rest, we aim at getting an explainable upper bound of $H(t,\delta)$. First, note that
\begin{equation}
\prod_{i=j}^t\left(1-\frac{2\delta}{i+2}\right) 
\overset{(a)}{\leq}  \left(e^{-2\delta}\right)^{ \sum_{i=j}^t\frac{1}{i+2}} \overset{(b)}{\leq} \left(\frac{ j+2}{t+3}\right)^{2\delta}.
\label{inequ:lemma-a6-3}
\end{equation}
where (a) is from $1-x\leq e^{-x}$ and (b) is from the integral bound:
\[
\sum_{i=j}^t \frac{1}{i+2} \geq \int_{j}^{t+1} \frac{1}{\tau+2}d\tau = \ln(t+3)-\ln(j+2).
\]
Hence, taking 
\[
\prod_{i=0}^t\left(1-\frac{2\delta}{i+2}\right) = (1-\delta)\prod_{i=1}^t\left(1-\frac{2\delta}{i+2}\right),
\]
then $H(t,\delta)$ reduces to
\begin{eqnarray}
H(t,\delta) &\leq& (1-\delta)\left(\frac{3}{t+3}\right)^{2\delta} \cdot h(\bm x_0) + \sum_{j=0}^{t-1}\left\{ \frac{A_j}{(j+2)^2} \left(\frac{j+3}{t+3}\right)^{2\delta} \right\} + \frac{A_t}{(t+2)^2} \nonumber\\
&=& (1-\delta)\left(\frac{3}{t+3}\right)^{2\delta} \cdot h(\bm x_0) + \underbrace{\left(\frac{1}{(t+3)^{2\delta}}\sum_{j=0}^{t}\frac{A_j (j+3)^{2\delta}}{(j+2)^2}\right)}_{(\star)} \label{equ:recurrence-inequality}
\end{eqnarray}
Note that $\sum_{j=0}^{t} \tfrac{(j+3)^{2\delta}}{(j+2)^2}$ is increasing with respect to $\delta$.
Therefore, for $\delta \leq  1/2$, 
\begin{equation}
(\star) \overset{\delta\leq\tfrac{1}{2}}{\leq } \frac{1}{(t+3)^{2\delta}} \sum_{j=0}^t \underbrace{\left( \frac{1}{j+2} + \frac{1}{(j+2)^2} \right)}_{=\frac{j+3}{(j+2)^2}} \overset{(c)}{\leq} \frac{1}{(t+3)^{2\delta}} \left( \ln(t+2) + 1 - \frac{1}{t+2} \right)
\leq \frac{\ln(t+2) + 1}{(t+3)^{2\delta}},
\end{equation}
where (c) is again because of the integral bound, 
\begin{equation}
\sum_{j=0}^t \left(\frac{1}{j+2}+\frac{1}{(j+2)^2}\right) \leq \int_{-1}^{t} \left(\frac{1}{j+2}+\frac{1}{(j+2)^2}\right) dj = \ln(t+2)-0 - \frac{1}{t+2} + 1.
\label{eq:helper1}
\end{equation}

On the other hand,  $\sum_{j=0}^{t} \left(\tfrac{j+3}{t+2}\right)^{2\delta}$ is decreasing with respect to $\delta$.
Therefore, for $\delta \geq  1/2$, by similar logic,
\begin{equation}
(\star) \overset{\delta\geq\tfrac{1}{2}}{\leq } \frac{1}{t+3} \sum_{j=0}^t \left( \frac{1}{j+2} + \frac{1}{(j+2)^2} \right) 
\leq \frac{\ln(t+2) + 1}{t+3}.
\end{equation}
Overall, this gives us
\begin{equation}
h(\bm x_{t+1}) \leq P(t+1,\delta)  \triangleq \begin{cases}
(1-\delta)\left(\frac{ 3}{t+3}\right)^{2\delta} \cdot h(\bm x_0) + \frac{\ln(t+2)+1}{(t+3)^{2\delta}}\cdot a & \text{ if } \delta \in (0,1/2]\\
(1-\delta)\left(\frac{ 3}{t+3}\right)^{2\delta} \cdot h(\bm x_0) + \frac{\ln(t+2)+1}{t+3}\cdot a & \text{ if } \delta \in (1/2,1).
\end{cases} \label{inequ:lemma:a4:1}
\end{equation}
However, for the case of $\delta \geq 1/2$, this is not the best we can do. If we include the $\delta$ in the error term, we can achieve an $O(A_t/t)$ error rate. In particular, for $\delta \in (1/2,1]$ and $t \geq 1$, we have the following
\begin{equation}
h(\bm x_{t}) \leq S(t,\delta) \triangleq \frac{3(1-\delta)h(\bm x_0) + A_t}{(2\delta-1)(t+2)}.
\label{inequ:new-recurrence}
\end{equation}
We prove (\ref{inequ:new-recurrence}) by the induction. For $t = 0$, the initial recurrence relation can be expressed as
\begin{align*}
h(\bm x_1) \leq (1-\delta) h(\bm x_0) + \frac{A_0}{4} &\leq \frac{3}{3}\cdot \frac{(1-\delta)h(\bm x_0)}{(2\delta-1)} + \frac{A_0}{(2\delta-1)(3)} \\
&\leq \frac{3(1-\delta)h(\bm x_0) + A_0}{(2\delta-1)(t+2)}.
\end{align*}
Suppose $t = k$, (\ref{inequ:new-recurrence}) is true, and we consider $t = k+1$. Then, defining $B_t = \frac{3(1-\delta)h(\bm x_0)+A}{2\delta-1}$,
\begin{align*}
h(\bm x_{k+1}) &\leq \left( 1 - \frac{2\delta}{k+2} \right) h(\bm x_k) + \frac{A_k}{(k+2)^2} \\
&\leq \left( 1 - \frac{2\delta}{k+2} \right) \frac{B_k}{k + 2} + \frac{A_t}{(k+2)^2} \\
&= \frac{A}{k+3} + \left(\frac{A}{k+2}-\frac{A}{k+3}\right) -\frac{2\delta A}{(k+2)^2} + \frac{a}{(k+2)^2}\\
&= \frac{A}{k + 3} + \underbrace{A\left( \frac{1}{(k+2)(k+3)} - \frac{2\delta}{(k+2)^2}\right) + \frac{a}{(k+2)^2}}_{B},
\end{align*}
where 
\[
(2\delta-1)(k+2)^2(k+3) B =
3(\delta-1)(2\delta-1)h(\bm x_0) k + 6(1-3\delta)(1-\delta)h(\bm x_0) - a \leq 0
\]
taking $C = 3(1-\delta)h(x_0)$ and thus $A = \frac{C}{2\delta-1}+\frac{a}{2\delta-1}$, then
\begin{eqnarray*}
(2\delta-1)(k+2)^2(k+3) B 
&=&(2\delta-1)A((k+2)-2\delta (k+3)) + (2\delta-1)a(k+3)\\
&=& C\underbrace{((1-2\delta)(k+3)-1)}_{\leq 0 \text{ if } \delta \geq 1/2}\underbrace{-a((2\delta-1)(k+3)+1)+(2\delta-1)a(k+3)}_{=-a}\\
&\leq& 0
\end{eqnarray*}
provided $\delta \in (1/2,1]$. Hence, we have the inequality (\ref{inequ:new-recurrence}).

Overall, this gives, for $t \geq 1$,
\begin{equation}
h(\bm x_{t}) \leq  \hat P(t,\delta) \triangleq \begin{dcases*}
P(t,\delta)  & if $\delta \in (0,\frac{1}{2}]$ \\
\min \left\{ P(t,\delta), S(t,\delta)\right\}  & if $\delta \in (\tfrac{1}{2},1]$ 
\end{dcases*} \label{inequ:lemma:a4:2}
\end{equation}
which concludes the proof.
\end{proof}

\begin{figure}[H]
\centering
\includegraphics[width=.95\textwidth]{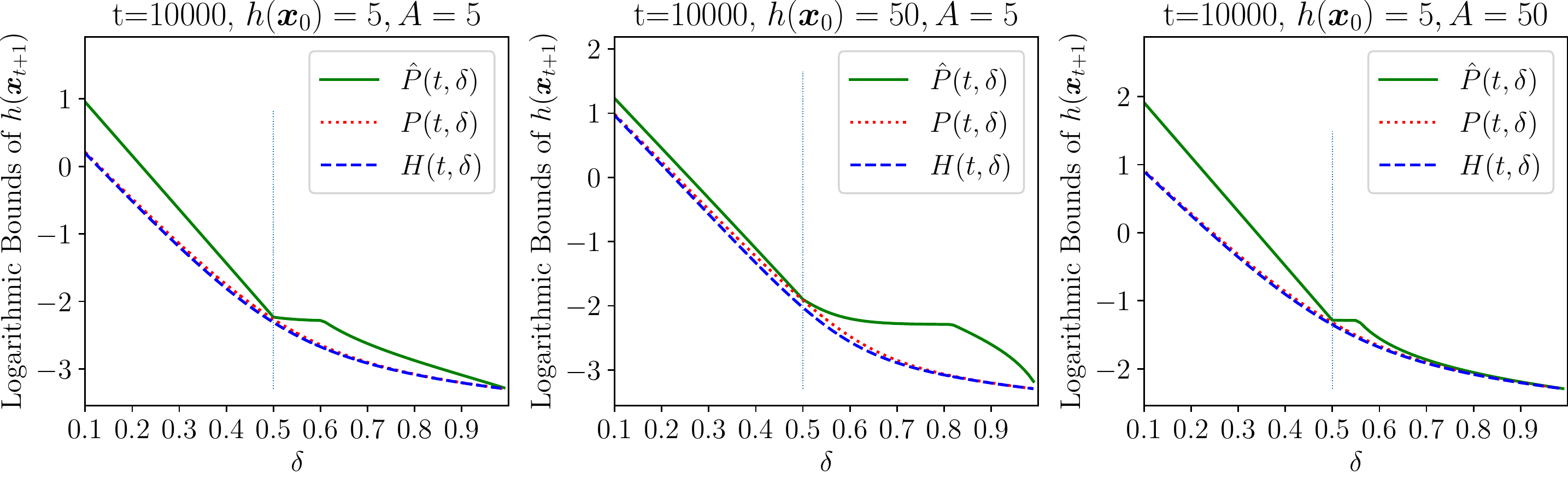}
\caption{The comparison of three different upper bounds of $h(\bm x_{t+1})$ in Lemma \ref{lemma:currence-inequality}. $H(t,\delta)$ is defined in (\ref{inequ:lemma-a6-1}), $P(t,\delta)$ is defined in (\ref{inequ:lemma:a4:1}), and $\hat{P}(t,\delta)$ is defined in (\ref{inequ:lemma:a4:2}).}
\label{fig:compare-upper-bound}
\end{figure}

Figure \ref{fig:compare-upper-bound} compares the rates $P(t,\delta)$ with $\hat P (t,\delta)$ in the regime of $\delta \geq 1/2$, as a way of bounding the error term $H(t,\delta)$. We are ready to prove our universal rate of \textsc{DMO-FW-I}.

\begin{reptheorem}{thm:1:fw-dmo-i}[Universal rate of \textsc{DMO-FW-I}] For all $t \geq 1$, the primal error of \textsc{DMO-FW-I} satisfies
\[
h(\bm x_t) \leq \begin{cases}
\min\left\{ 2 C\sqrt{s} \|\nabla f(\bm x_t)\|_\infty, P(\delta,2\delta) \right\} & \delta \in (0,\tfrac{1}{2}] \\
\min \left\{ P(\delta, 1),  \tfrac{3(1-\delta)h(\bm x_0) + A_t}{(2\delta-1)(t+2)}\right\} & \delta \in (\tfrac{1}{2},1]
\end{cases}
\]
where $s$ is maximal allowed sparsity, $P(\delta,\nu)$ is
\begin{equation}
P(\delta,\nu)\triangleq\frac{(1-\delta)9^\delta}{(t+2)^{2\delta}} \cdot h(\bm x_0) + \frac{\ln(t+1) + 1}{(t+2)^{\nu}} \cdot A_t \nonumber
\end{equation}
and $A_t \triangleq \max_{i \in [t]} 4(1-\delta)(i+2) |\left\langle -\bm x_i, \nabla f(\bm x_i) \right\rangle| + 8 L C^2$.
\end{reptheorem}

\begin{proof}

When DMO provides an exact solution, i.e., $\delta=1$, then for all $t \geq 0$, $h(\bm x_{t+1})$ of \textsc{DMO-FW-I} is recursively bounded as 
\[
h({\bm x_{t+1}}) \leq \left(1-\frac{2}{t+2}\right)h({\bm x_t}) +  \frac{2 L (2 C)^2 }{(t+2)^2},
\]
which eventually leads to $h(\bm x_{t}) \leq 8 L C^2 / (t+2)$ for all $t \geq 1$. In the rest of the proof, we assume $0 < \delta < 1$. Note that $\bm x_{t+1} = \frac{t\bm x_t}{t+2}  + \frac{2 \tilde{\bm v}_t}{t+2}$ and $\bm x_1 = \tilde{\bm v}_0$ by Line 7 of Algorithm \ref{algo:approx-fw}. We rewrite $\bm x_{t+1}$ as a convex combination of ${\tilde{\bm v}}_t$ as follows
\begin{equation}
\bm x_{t+1} = \frac{t\bm x_t}{t+2}  + \frac{2 \tilde{\bm v}_t}{t+2} = \sum_{i=0}^{t} \frac{2 (i+1) \tilde{\bm v}_i}{(t+1)(t+2)} =  \sum_{i=0}^{t} \left\{ \frac{ 2 (i+1)}{(t+1)(t+2)}\cdot \frac{-C\nabla f(\bm x_i)_{S_i}}{\|\nabla f(\bm x_i)_{S_i}\|_2} \right\}, \nonumber
\end{equation}
where the last equality follows Line 6 of Algorithm \ref{algo:approx-fw}. Since we assume  $\bm x_0 \in \mathcal{D}$ and when $t\geq 1$, the inner product $\langle - \bm x_t, \nabla f(\bm x_t) \rangle$ can be expressed as 
\begin{align}
\langle - \bm x_t, \nabla f(\bm x_t) \rangle &= \sum_{i=0}^{t-1} \frac{2 (i+1)}{t(t+1)} \left\langle \frac{ C \nabla f(\bm x_i)_{S_i}}{\|\nabla f(\bm x_i)_{S_i}\|_2}, \nabla f(\bm x_t) \right\rangle \label{inequ:thm:5.1-1}
\end{align}

First of all, note that when $\delta \in (0,1/2]$, one can simply apply convexity, that is,
\begin{align*}
h(\bm x_t) &= f(\bm x_t) - f(\bm x_*) \nonumber\\
&\leq |\nabla f(\bm x_t)^\top (\bm x_t - \bm x_*)| \nonumber\\
&\leq |\nabla f(\bm x_t)^\top \bm x_t|  + | \nabla f(\bm x_t)^\top  \bm x_*| \nonumber\\
&\leq C \sqrt{s}\|\nabla f(\bm x_t)\|_\infty + C \sqrt{s}\|\nabla f(\bm x_t)\|_\infty \nonumber\\
&= 2 C\sqrt{s}\|\nabla f(\bm x_t)\|_\infty, \label{inequ:thm:5.1-2}
\end{align*}
where the last inequality is due to (\ref{inequ:thm:5.1-1}) and follows the fact that $\|\bm x_*\|_0 \leq s$.

By $L$-smooth, we have
\begin{eqnarray*}
f(\bm x_{t+1})-f(\bm x_t) &\leq& \eta_t  \underbrace{\nabla f(\bm x_t)^\top \tilde{\bm v}_t}_{\leq \delta \nabla f(\bm x_t)^\top {\bm v}_t} -  \eta_t  \nabla f(\bm x_t)^\top\bm x_t  + \frac{\eta_t^2 L \|\tilde {\bm v}_t-\bm x_t\|_2^2}{2} \\
&\leq& -\eta_t \delta (f(\bm x_t)-f^*) - \eta_t (1-\delta)\nabla f(\bm x_t)^\top \bm x_t + \frac{\eta_t^2 L \|\tilde {\bm v}_t-\bm x_t\|_2^2}{2},
\end{eqnarray*}
where the first inequality is due to $\nabla f(\bm x_t)^\top \tilde{\bm v}_t \leq \delta \nabla f(\bm x_t)^\top {\bm v}_t$ and second inequality follows from convexity, i.e., $\nabla f(\bm x_t)^\top (\bm v_t - \bm x_t) \leq - (f(\bm x_t) - f^* )$. By setting $\eta_t = 2/(t+2)$, we reach
\begin{align}
h({\bm x_{t+1}}) &\leq \left(1-\frac{2\delta}{t+2}\right)h({\bm x_t}) + \frac{2(1-\delta)}{t+2} \left\langle - \bm x_t,\nabla f(\bm x_t) \right\rangle + \frac{2 L \|\tilde{\bm v}_t - \bm x_t\|^2}{(t+2)^2} \nonumber\\
&\leq \left(1-\frac{2\delta}{t+2}\right)h({\bm x_t}) + \frac{2(1-\delta)}{t+2}\sum_{i=0}^{t-1} \frac{2 (i+1)}{t(t+1)} \left\langle \frac{ \nabla f(\bm x_i)_{S_i}}{\|\nabla f(\bm x_i)_{S_i}\|_2}, \nabla f(\bm x_t) \right\rangle + \frac{8 L C^2}{(t+2)^2} \nonumber\\
&= \left(1-\frac{2\delta}{t+2}\right)h({\bm x_t}) + \frac{1}{(t+2)^2}\left\{ \frac{4(1-\delta)(t+2)}{t(t+1)} \left\langle \sum_{i=0}^{t-1} \frac{ (i+1) C \nabla f(\bm x_i)_{S_i}}{\|\nabla f(\bm x_i)_{S_i}\|_2}, \nabla f(\bm x_t) \right\rangle + 8 L C^2 \right\} \\
&= \left(1-\frac{2\delta}{t+2}\right)h({\bm x_t}) + \frac{1}{(t+2)^2}\left\{ 4(1-\delta)(t+2) \left\langle -\bm x_t, \nabla f(\bm x_t) \right\rangle + 8 L C^2 \right\}. \label{inequ:step2}
\end{align}
By applying Lemma \ref{lemma:currence-inequality} with $A_t \triangleq \max_{i \in [t]} 4(1-\delta)(i+2) |\left\langle -\bm x_i, \nabla f(\bm x_i) \right\rangle| + 8 L C^2$ and combine it with (\ref{inequ:thm:5.1-2}), we complete our proof.
\end{proof}

\begin{repcorollary}{corollary:practical-bound}[Practical rate of \textsc{DMO-FW-I}] Suppose $\nu \in (0, 1]$ and $\|\nabla f(\bm x)\|_\infty \leq B$, the primal error of \textsc{DMO-FW-I} admits the following practical bound
\begin{equation}
h(\bm x_t) \leq \begin{cases}
\mathcal{O}\left( \frac{B C\sqrt{s}}{t^{\nu}}\right) & \|\nabla f(\bm x_t)\|_\infty \leq \frac{B}{t^\nu} \\
\mathcal{O}\left(\frac{B C\sqrt{s}(1-\delta)}{\delta}\right) & \|\nabla f(\bm x)\|_\infty \leq B.
\end{cases}
\end{equation}
where $s \triangleq \max_{S\in \mathbb{M}} |S|$ is the maximal allowed sparsity.
\end{repcorollary}

\begin{proof}
First of all, when $\|\nabla f(\bm x_t)\|_\infty \leq B / t^\nu$, one can simply apply Thm. \ref{thm:1:fw-dmo-i} to show $h(\bm x_t)\leq \mathcal{O}(BC\sqrt{s}/t^\nu)$. In the rest, we show the second case when $\|\nabla f(\bm x)\|_\infty \leq B$. Rewrite $\bm x_{t}$ where $\bm x_1 = \tilde{\bm v}_0$ and 
\begin{align*}
\bm x_{t+1} &= \frac{t\bm x_t}{t+2}  + \frac{2 \tilde{\bm v}_t}{t+2} \\
&= \sum_{i=0}^{t} \frac{2 (i+1) \tilde{\bm v}_i}{(t+1)(t+2)} \\
&= \sum_{i=0}^{t} \left\{ \frac{2 (i+1)}{(t+1)(t+2)}\cdot \frac{ - C \nabla f(\bm x_i)_{S_i}}{\|\nabla f(\bm x_i)_{S_i}\|_2} \right\}, \nonumber
\end{align*}
where the last equality follows Line 4 of Algorithm \ref{algo:approx-fw}. When $t=0$, we assume the initial point is $\bm x_0 = \bm 0$. When $t\geq 1$ the inner product $\langle - \bm x_t, \nabla f(\bm x_t) \rangle$ can be bounded as the following
\begin{align*}
\langle - \bm x_t, \nabla f(\bm x_t) \rangle &= C \sum_{i=0}^{t-1} \frac{2 (i+1)}{t(t+1)} \left\langle \frac{ \nabla f(\bm x_i)_{S_i}}{\|\nabla f(\bm x_i)_{S_i}\|_2}, \nabla f(\bm x_t) \right\rangle \\
&\leq C \underbrace{\sum_{i=0}^{t-1} \frac{2 (i+1)}{t(t+1)}}_{=1} \underbrace{\left\| \frac{ \nabla f(\bm x_i)_{S_i}}{\|\nabla f(\bm x_i)_{S_i}\|_2}\right\|_2}_{=1}\cdot \|\nabla f(\bm x_t)_{S_i}\|_2 \\
&\leq C \sqrt{s} \|\nabla f(\bm x_t)\|_\infty,
\end{align*}
where the first inequality follows by the Holder's inequality and the last inequality is the assumption of boundness of $\|\nabla f(\bm x_t)\|_\infty$. By $L$-smooth of $f$, we have
\begin{eqnarray*}
f(\bm x_{t+1})-f(\bm x_t) &\leq& \eta_t  \underbrace{\nabla f(\bm x_t)^\top \tilde {\bm v}_t}_{\leq \delta \nabla f(\bm x_t)^\top {\bm v}_t} -  \eta_t  \nabla f(\bm x_t)^\top\bm x_t  + \frac{L\eta_t^2}{2}\|\tilde {\bm v}_t-\bm x_t\|_2^2 \\
&\leq& -\eta_t \delta (f(\bm x_t)-f^*) - (1-\delta)\eta_t \nabla f(\bm x_t)^\top \bm x_t + \frac{ 4 \eta_t^2 L  C^2}{2}.
\end{eqnarray*}
By setting the step size $\eta_t = \tfrac{2}{t+2}$. This leads to the following
\begin{align}
h({\bm x_{t+1}}) &\leq \left(1-\frac{2\delta}{t+2}\right)h({\bm x_t}) + \frac{2(1-\delta)}{(t+2)} \left\langle - \bm x_t,\nabla f(\bm x_t) \right\rangle + \frac{8 L C^2}{(t+2)^2} \nonumber\\
&\leq \left(1-\frac{2\delta}{t+2}\right)h({\bm x_t}) + \frac{2(1-\delta) \sqrt{s} \|\nabla f(\bm x_t)\|_\infty C}{(t+2)} + \frac{8 L C^2}{(t+2)^2}. \label{inequ:step22}
\end{align}
Notice that the above recurrence (\ref{inequ:step22}) can be written as 
\begin{align*}
h(\bm x_{t+1}) &\leq \prod_{i=0}^t \left( 1- \frac{2\delta}{i+2} \right) h(\bm x_0) + \sum_{i=0}^t \left\{ \left(  \frac{2(1-\delta) \sqrt{s}M_i C}{(i+2)} + \frac{8 L C^2}{(i+2)^2} \right) \prod_{j=i+1}^t \left( 1 - \frac{2\delta}{j+2} \right) \right\} \\
&\leq (1-\delta)\left(\frac{3}{t+3}\right)^{2\delta} \cdot h(\bm x_0) + \sum_{i=0}^t \left\{ \left(  \frac{2(1-\delta) \sqrt{s}M_i C}{(i+2)} + \frac{8 L C^2}{(i+2)^2} \right) \prod_{j=i+1}^t \left( 1 - \frac{2\delta}{j+2} \right) \right\} \\
&\leq (1-\delta)\left(\frac{3}{t+3}\right)^{2\delta} \cdot h(\bm x_0) + \sum_{i=0}^t \left\{ \left(  \frac{2(1-\delta) \sqrt{s}M_i C}{(i+2)} + \frac{8 L C^2}{(i+2)^2} \right)\frac{(i+3)^{2\delta}}{(t+3)^{2\delta}} \right\}.
\end{align*}
Let $M_{\bar{t}} = \max_{i\in [t]} \|\nabla f(\bm x_t)\|_\infty$. Therefore, we reach
\begin{align*}
h(\bm x_{t+1}) &\leq (1-\delta)\left(\frac{3}{t+3}\right)^{2\delta} \cdot h(\bm x_0) + \sum_{i=0}^t \left\{ \left(  \frac{2(1-\delta)\sqrt{s}M_{\bar{t}} C }{(i+2)} + \frac{8 L C^2}{(i+2)^2} \right)\frac{(i+3)^{2\delta}}{(t+3)^{2\delta}} \right\}
\end{align*}
We consider three cases: 

1) When $\delta = 1/2$, we have
\begin{align*}
h(\bm x_{t+1}) &\leq \frac{3}{2(t+3)} \cdot h(\bm x_0) +  \sum_{i=0}^t \frac{\sqrt{s}M_{\bar{t}} C(i+3)}{(i+2)(t+3)} + \sum_{i=0}^t \frac{8 L C^2(i+3)}{(i+2)^2(t+3)} \\
&\leq \frac{3}{2(t+3)} \cdot h(\bm x_0) + \frac{3\sqrt{s}M_{\bar{t}} C}{2(t+3)} (t+1) + \frac{12 L C^2 \ln (t+2)}{(t+3)}\\
&\leq \mathcal{O}(\sqrt{s}M_{\bar{t}} C) \\
&= \mathcal{O}\left(\frac{\sqrt{s}(1-\delta)M_{\bar{t}} C}{\delta}\right).
\end{align*}

2) when $\delta > 1/2$, we have
\begin{align*}
h(\bm x_{t+1}) &\leq (1-\delta)\left(\frac{3}{t+3}\right)^{2\delta} \cdot h(\bm x_0) + \sum_{i=0}^t \left\{ \left(  \frac{2(1-\delta)\sqrt{s}M_{\bar{t}} C }{(i+2)} + \frac{8 L C^2}{(i+2)^2} \right)\frac{(\frac{3}{2})^{2\delta}(i+2)^{2\delta}}{(t+3)^{2\delta}} \right\} \\
&\leq (1-\delta)\left(\frac{3}{t+3}\right)^{2\delta} \cdot h(\bm x_0) + \left(\frac{9}{4}\right)^\delta\int_{2}^{t+3} \frac{\sqrt{s}(1-\delta)M_{\bar{t}}C}{\delta} \frac{x^{2\delta}}{(t+3)^{2\delta}} + \frac{8 L C^2}{(2\delta - 1)} \frac{x^{2\delta-1}}{(t+3)^{2\delta}} d x \\
&\leq (1-\delta)\left(\frac{3}{t+3}\right)^{2\delta} \cdot h(\bm x_0) + \left(\frac{9}{4}\right)^\delta \left(\frac{\sqrt{s}(1-\delta)M_{\bar{t}} C}{\delta} + \frac{8 L C^2}{(2\delta - 1)(t+3)}\right) \\
&\leq \mathcal{O}\left(\frac{\sqrt{s} (1-\delta) M_{\bar{t}}C}{\delta}\right).
\end{align*}

3) when $\delta < 1/2$, we have
\begin{align*}
&\leq (1-\delta)\left(\frac{3}{t+3}\right)^{2\delta} \cdot h(\bm x_0) + \sum_{i=0}^t \left\{ \left(\frac{3\sqrt{s}(1-\delta)M_{\bar{t}} C }{(i+3)} + \frac{18 L C^2}{(i+3)^2} \right)\frac{(i+3)^{2\delta}}{(t+3)^{2\delta}} \right\} \\
&\leq (1-\delta)\left(\frac{3}{t+3}\right)^{2\delta} \cdot h(\bm x_0) + \int_{2}^{t+3} \frac{3\sqrt{s}(1-\delta)M_{\bar{t}} C x^{2\delta}}{2\delta (t+3)^{2\delta}} d x + \int_{2}^{t+3} \frac{18 L C^2 x^{2\delta-1}}{(2\delta-1)(t+3)^{2\delta}} d x \\
&\leq (1-\delta)\left(\frac{3}{t+3}\right)^{2\delta} \cdot h(\bm x_0) + \frac{3(1-\delta)\sqrt{s}M_{\bar{t}} C }{2\delta} + \frac{9 L C^2 4^\delta}{(1 - 2\delta)(t+3)^{2\delta}} \\
&\leq \mathcal{O}\left(\frac{(1-\delta)\sqrt{s}M_{\bar{t}} C}{\delta}\right).
\end{align*}
Combine the above three cases and use the fact that $M_{\bar{t}} \leq B$ by our assumption, we prove the corollary.
\end{proof}

\begin{remark}
The above bound is tight when $\delta = 1$, and it recovers the standard convergence of FW \citep{jaggi2013revisiting}.
\end{remark}

\subsection{Proof of Theorem \ref{thm:2:fw-dmo-ii}}

\begin{reptheorem}{thm:2:fw-dmo-ii}[Convergence of \textsc{DMO-FW-II}]
Under the same assumptions  as in Thm. \ref{thm:1:fw-dmo-i}, for any $t \geq 1$, the primal error of \textsc{DMO-FW-II} satisfies
\begin{equation}
h(\bm x_t) = f(\bm x_{t})-f(\bm x^*) \leq \frac{8 L C^2}{\delta^2 (t+2)}, \label{inequ:rate:thm22}
\end{equation}
where $\bm x_{t} \in \mathcal{D}/\delta$ and $\bm x^* \in \arg\min_{\bm x \in \mathcal{D}} f(\bm x)$.
\end{reptheorem}

\begin{proof}
By $L$-smoothness of $f$, we have 
\[
f(\bm x_{t+1}) \leq f(\bm x_t) + \eta_t \left\langle \nabla f(\bm x_t), \frac{\tilde{\bm v}_t}{\delta} - \bm x_t \right\rangle + \frac{L \eta_t^2 \|\tfrac{\tilde{\bm v}_t}{\delta} - \bm x_t\|^2}{2}.
\]
Let $\eta_t = \tfrac{2}{(t+2)}$ and adding $-f(\bm x^*)$ (notice that $\bm x^* \in \arg\min_{\bm x \in \mathcal{D}} f(\bm x)$) on both sides, we have
\begin{align*}
h(\bm x_{t+1}) \leq h(\bm x_t) + \frac{2}{t+2} \left\langle \nabla f(\bm x_t), \frac{\tilde{\bm v}_t}{\delta} - \bm x_t \right\rangle + \frac{8 L C^2}{\delta^2(t+2)^2},
\end{align*}
where the last term follows from the scaling diameter of $\mathcal{D} / \delta$, i.e. 
\[
\bm x_t,\tilde {\bm v}_t/\delta\in \mathcal D/\delta \Rightarrow \|\bm x_t - \tilde {\bm v}_t/\delta\|^2_2 \leq (2 C / \delta)^2.
\]
Since $\tilde {\bm v}_t$ is a $(\delta, \nabla f(\bm x_t), \mathcal{D})$-DMO, it admits 
\[
\langle \nabla f(\bm x_t), \tilde{\bm v}_t \rangle \leq \delta \min_{\bm s \in \mathcal{D}}   \langle \nabla f(\bm x_t), \bm s \rangle <0.
\]
Scaling by $1 / \delta$ and then adding $\langle \nabla f(\bm x_t), -\bm x_t \rangle$ both sides, we reach 
\[
\langle \nabla f(\bm x_t), \tilde{\bm v}_t /\delta - \bm x_t\rangle \leq \min_{\bm s \in \mathcal{D}} \langle \nabla f(\bm x_t), \bm s - \bm x_t \rangle.
\]
We continue to have the following
\begin{align*}
h(\bm x_{t+1}) &\leq h(\bm x_t) + \frac{2}{t+2} \min_{\bm s \in \mathcal{D}}  \langle \nabla f(\bm x_t), {\bm s} - \bm x_t \rangle + \frac{2 L D^2}{\delta^2(t+2)^2} \\
&\leq h(\bm x_t) + \frac{2}{t+2} \langle \nabla f(\bm x_t), {\bm x}^* - \bm x_t \rangle + \frac{8 L C^2}{\delta^2(t+2)^2} \\
&\leq \left(1-\frac{2}{t+2}\right) h(\bm x_t) + \frac{8 L C^2}{\delta^2(t+2)^2},
\end{align*}
where the last inequality is due to the convexity of $f$, i.e. $\langle \nabla f(\bm x_t), {\bm x}^* - \bm x_t \rangle \leq f(\bm x^*) - f(\bm x_t)$ and $\bm x^* \in \mathcal{D}$. By a similar argument of induction shown in Lemma \ref{lemma:currence-inequality}, we can show the bound of 
\[
h(\bm x_{t}) \leq \frac{8 L C^2}{\delta^2 (t+2)}.
\]
\end{proof}
\begin{remark}
The above proof follows a similar proof strategy as in \citet{jaggi2013revisiting}. Different from previous one, we show that when $\bm x_t$ extended to $\mathcal{D}/\delta$ with a $\delta$-approximation DMO, we can still have a convergence rate inverse proportional to $\delta$.
\end{remark}

\subsection{Proof of Theorem \ref{thm:4:acc-fw-dmo-ii}}

\paragraph{Notations.} Recall that our domain $\mathcal{D} = \operatorname{conv} \{ \bm x: \| \bm x\|_2 \leq C, \operatorname{supp}(\bm x) \in \mathbb{M} \}$. The corresponding set of extreme points is $\mathcal{V}:= \{\bm x: \operatorname{supp}(\bm x) \in \mathbb{M}, \| \bm x\|_2 = C\}$. Follow notations of \citet{garber21a}, we denote the set of optimal points $\mathcal{X}^* := \arg\min_{\bm x\in \mathcal{D}} f(\bm x)$. Recall the quadratic growth condition as the following.
\begin{definition}[Quadratic Growth Condition]
Let $f$ be continuous differentiable.  $\mathcal{X}^* \triangleq \arg\min_{\bm x \in \mathcal{D}} f(\bm x)$. We say $f$ satisfies quadratic growth condition on $\mathcal{D}$ if there exists a constant $\mu > 0$ such that
\begin{equation}
f(\bm x) - f(\bm x^*) \geq \frac{\mu}{2} \| \bm x - \left[\bm x\right]_{\mathcal{X}^*}\|_2^2,
\end{equation}
for all $\bm x \in \mathcal{D}$ where $\left[\bm x\right]_{\mathcal{X}^*} \triangleq \argmin{\bm z \in \mathcal{X}^*} \| \bm z - \bm x\|_2^2$.
\end{definition}
The quadratic growth condition is weaker than restricted strong convex and strongly convex. When $f$ is convex differentiable, it has been observed, it is equivalent to others such as PL condition \cite{karimi2016linear}. We start from the following key lemma.
\begin{lemma}
If $\delta=1$, each iteration of \textsc{DMO-AccFW} admits
\begin{equation}
\eta_t \langle \bm v_t - \bm x_t, \nabla f(\bm x_t)\rangle + \frac{L \eta_t^2}{2} \left\| \bm v_t - \bm x_t\right\|_2^2 \leq \min_{\bm v \in \mathcal{D}} \left\{\eta_t \langle \bm v - \bm x_t, \nabla f(\bm x_t)\rangle + \frac{L \eta_t^2}{2} \left\| \bm v - \bm x_t\right\|_2^2  + \frac{L \eta_t^2}{2} \left(C^2 - \| \bm v\|_2^2\right)\right\}. \label{inequ:lemma:12:general-bound}
\end{equation}
Let $\mathcal{X}^* = \argmin{\bm x \in \mathcal{D}} f(\bm x)$ and suppose optimal points are on the boundary, i.e., $\| \bm x^* \|_2 = C$ for all $\bm x^* \in \mathcal{X}^*$. 
\begin{equation}
\eta_t \langle \bm v_t - \bm x_t, \nabla f(\bm x_t)\rangle + \frac{L \eta_t^2}{2} \left\| \bm v_t - \bm x_t\right\|_2^2 \leq \eta_t \langle [\bm x_t]_{\mathcal{X}^*} - \bm x_t, \nabla f(\bm x_t)\rangle + \frac{L \eta_t^2}{2} \left\| [\bm x_t]_{\mathcal{X}^*} - \bm x_t\right\|_2^2,
\end{equation}
where $\bm v_t$ is a minimizer (recall that $\bar{\bm v}_t = \bm v_t$ when $\delta=1$), e.g.,
\[
\left\langle \bm v_t, -\left( \bm x_t - \frac{\nabla f(\bm x_t)}{L \eta_t} \right) \right\rangle \leq \min_{\bm v \in \mathcal{D} }\left\langle \bm v, -\left( \bm x_t - \frac{\nabla f(\bm x_t)}{L \eta_t} \right) \right\rangle
\]
and where here $\left[\bm x\right]_{\mathcal{X}^*} \triangleq \argmin{\bm z \in \mathcal{X}^*} \| \bm z - \bm x\|_2^2$. 
\label{lemmma:equivilent-lemma}
\end{lemma}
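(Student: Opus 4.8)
The plan is to exploit the fact that when $\delta=1$ the vector $\bm v_t$ produced by \textsc{DMO-AccFW} is an \emph{exact} maximizer of the linear functional $\bm v \mapsto \langle \bm v,\, \bm x_t - \tfrac{1}{L\eta_t}\nabla f(\bm x_t)\rangle$ over $\mathcal D$ and, in addition, lies on the boundary, i.e. $\|\bm v_t\|_2 = C$. The exactness is Theorem \ref{theorem:dmo-to-ipo} specialized to $\delta=1$ (an exact DMO yields an exact IPO), and the norm equality is immediate from Line 6 of Algorithm \ref{algo:approx-fw}, since $\tilde{\bm v}_t$ is $C$ times a unit vector and $\tilde{\bm v}_t = \bm v_t$ when $\delta=1$. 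The whole proof then reduces to one optimality inequality plus a short algebraic identity.

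First I would record the optimality condition. Starting from $\langle \bm v_t,\, -(\bm x_t - \tfrac{1}{L\eta_t}\nabla f(\bm x_t))\rangle \le \langle \bm v,\, -(\bm x_t - \tfrac{1}{L\eta_t}\nabla f(\bm x_t))\rangle$ for all $\bm v\in\mathcal D$, multiplying through by $L\eta_t > 0$ (which preserves the inequality direction; the negation inside Line 4 must be tracked at this step) and rearranging gives, for every $\bm v\in\mathcal D$,
\[
\langle \bm v_t - \bm v,\, \nabla f(\bm x_t)\rangle \;\le\; L\eta_t\,\langle \bm v_t - \bm v,\, \bm x_t\rangle.
\]
Call this $(\star)$.

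Next I would prove the first (general) inequality by direct comparison. Fix any $\bm v\in\mathcal D$ and subtract the pre-$\min$ right-hand side from the left-hand side. Expanding $\|\bm v_t - \bm x_t\|_2^2 - \|\bm v - \bm x_t\|_2^2 = \|\bm v_t\|_2^2 - \|\bm v\|_2^2 - 2\langle \bm v_t - \bm v,\, \bm x_t\rangle$ and substituting $\|\bm v_t\|_2^2 = C^2$, the terms $\tfrac{L\eta_t^2}{2}(C^2 - \|\bm v\|_2^2)$ cancel exactly, leaving
\[
\eta_t\big(\langle \bm v_t - \bm v,\, \nabla f(\bm x_t)\rangle - L\eta_t\,\langle \bm v_t - \bm v,\, \bm x_t\rangle\big) \;\le\; 0
\]
by $(\star)$ together with $\eta_t>0$. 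Since this holds for every $\bm v\in\mathcal D$, taking the minimum over $\bm v$ yields the claimed bound. The one conceptual point worth stating is that the extra term $C^2 - \|\bm v\|_2^2 \ge 0$ for all $\bm v\in\mathcal D$ (convexity of the norm and $\mathcal D = \operatorname{conv}\{\|\bm x\|_2\le C,\ \operatorname{supp}(\bm x)\in\mathbb M\}$), so it is a genuine relaxation slack capturing that $\bm v_t$ optimizes only the linear part rather than the full quadratic projection objective.

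For the second (boundary) inequality I would simply instantiate the first one at $\bm v = [\bm x_t]_{\mathcal X^*}$; this is admissible because $[\bm x_t]_{\mathcal X^*}\in\mathcal X^*\subseteq\mathcal D$. Under the boundary hypothesis $\|[\bm x_t]_{\mathcal X^*}\|_2 = C$, so the slack term $\tfrac{L\eta_t^2}{2}(C^2 - \|[\bm x_t]_{\mathcal X^*}\|_2^2)$ vanishes and the stated inequality follows. I do not expect a real obstacle here: the only places demanding care are the sign bookkeeping in deriving $(\star)$ and the observation that the boundary assumption is precisely what annihilates the extra term — everything else is a one-line quadratic expansion.
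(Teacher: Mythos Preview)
Your proposal is correct and follows essentially the same approach as the paper: both arguments hinge on the optimality of $\bm v_t$ for the linear functional over $\mathcal D$ together with $\|\bm v_t\|_2 = C$, and then reduce to the same algebraic identity (the paper completes the square to $\|\bm v_t - (\bm x_t - \nabla f(\bm x_t)/(L\eta_t))\|_2^2$, while you subtract the two sides directly, but these are the same computation). The second inequality is obtained identically in both, by instantiating at $[\bm x_t]_{\mathcal X^*}$ and using the boundary hypothesis to kill the slack term.
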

\begin{proof}
Since $\bm v_t$ follows the above inequality, we have
\[
L \eta_t^2 \left\langle \bm v_t, -\left( \bm x_t - \frac{\nabla f(\bm x_t)}{L \eta_t} \right) \right\rangle \leq \min_{\bm v \in \mathcal{D} } L \eta_t^2  \left\langle \bm v, -\left( \bm x_t - \frac{\nabla f(\bm x_t)}{L \eta_t} \right) \right\rangle
\]
By adding both sides $L \eta_t^2 (C^2 + \|\bm x_t - \frac{\nabla f(\bm x_t)}{L \eta_t}\|_2^2)/2$, we reach
\[
\frac{L \eta_t^2}{2} \left\| \bm v_t -\left( \bm x_t - \frac{\nabla f(\bm x_t)}{L \eta_t} \right) \right\|_2^2 \leq \min_{\bm v \in \mathcal{D} } \left\{ \frac{L \eta_t^2}{2}  \left\| \bm v -\left( \bm x_t - \frac{\nabla f(\bm x_t)}{L \eta_t} \right) \right\|_2^2 + \frac{L \eta_t^2}{2} \left(C^2 - \| \bm v\|_2^2\right) \right\},
\]
where $\|\bm v_t\|_2^2 = C^2$. Since $\left[\bm x_t\right]_{\mathcal{X}^*} \in \mathcal{X}^* \subseteq \mathcal{D}$ and $\|\left[\bm x_t\right]_{\mathcal{X}^*}\|_2^2 = C^2$ by our assumption, we immediately get
\begin{align*}
\frac{L \eta_t^2}{2} & \left\| \bm v_t - \bm x_t\right\|_2^2 + \eta_t \langle \bm v_t - \bm x_t, \nabla f(\bm x_t)\rangle + \frac{\left\|\nabla f(\bm x_t)\right\|_2^2}{2 L} \\
&\leq \min_{\bm v \in \mathcal{D}} \left\{\frac{L \eta_t^2}{2} \left\| \bm v - \bm x_t\right\|_2^2 + \eta_t \langle \bm v - \bm x_t, \nabla f(\bm x_t)\rangle + \frac{\left\|\nabla f(\bm x_t)\right\|_2^2}{2 L} + \frac{L \eta_t^2}{2} \left(C^2 - \| \bm v\|_2^2\right)\right\} \\
&\leq \frac{L \eta_t^2}{2} \left\| [\bm x_t]_{\mathcal{X}^*} - \bm x_t\right\|_2^2 + \eta_t \langle [\bm x_t]_{\mathcal{X}^*} - \bm x_t, \nabla f(\bm x_t)\rangle + \frac{\left\|\nabla f(\bm x_t)\right\|_2^2}{2 L}.
\end{align*}
We immediately get
\begin{align*}
\frac{L \eta_t^2}{2} & \left\| \bm v_t - \bm x_t\right\|_2^2 + \eta_t \langle \bm v_t - \bm x_t, \nabla f(\bm x_t)\rangle + \frac{\left\|\nabla f(\bm x_t)\right\|_2^2}{2 L} \\
&\leq \min_{\bm v \in \mathcal{D}} \left\{\frac{L \eta_t^2}{2} \left\| \bm v - \bm x_t\right\|_2^2 + \eta_t \langle \bm v - \bm x_t, \nabla f(\bm x_t)\rangle + \frac{\left\|\nabla f(\bm x_t)\right\|_2^2}{2 L} + \frac{L \eta_t^2}{2} \left(C^2 - \| \bm v\|_2^2\right)\right\}.
\end{align*}
Simplifying the above inequality, we have the general bound (\ref{inequ:lemma:12:general-bound}). When $\|\bm x^*\|_2^2 = C^2$, simplifying the above inequality, we have
\[
\eta_t \langle \bm v_t - \bm x_t, \nabla f(\bm x_t)\rangle + \frac{L \eta_t^2}{2} \left\| \bm v_t - \bm x_t\right\|_2^2 \leq \eta_t \langle [\bm x_t]_{\mathcal{X}^*} - \bm x_t, \nabla f(\bm x_t)\rangle + \frac{L \eta_t^2}{2} \left\| [\bm x_t]_{\mathcal{X}^*} - \bm x_t\right\|_2^2.
\]

\end{proof}

\begin{theorem}
Let $f$ be convex and satisfies quadratic growth condition. Assume that for all $\bm x^*$, $\| \bm x^*\|_2^2 = C$, then if $\delta=1$ and $\eta_t = 2/(t+2)$, \textsc{DMO-AccFW-I} has the following convergence rate 
\begin{equation}
h(\bm x_{t}) \leq \frac{ 4 e^{4 L/\mu} }{(t+2)^2} h(\bm x_0)
\end{equation}
for all $t \geq 1$.
\end{theorem}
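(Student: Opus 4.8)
The plan is to turn one iteration of \textsc{DMO-AccFW-I} into a contraction on the primal error $h(\bm x_t)=f(\bm x_t)-f(\bm x^*)$ and then solve the resulting scalar recurrence. \emph{Step 1 (one-step inequality).} Since $\delta=1$, the update is $\bm x_{t+1}=\bm x_t+\eta_t(\bm v_t-\bm x_t)$ with $\bm v_t$ the exact oracle point appearing in Lemma~\ref{lemmma:equivilent-lemma}. Applying $L$-smoothness along $[\bm x_t,\bm x_{t+1}]$ gives
\[
f(\bm x_{t+1})\le f(\bm x_t)+\eta_t\langle\nabla f(\bm x_t),\bm v_t-\bm x_t\rangle+\tfrac{L\eta_t^2}{2}\|\bm v_t-\bm x_t\|_2^2 .
\]
Because $\|\bm x^*\|_2=C$ for every $\bm x^*\in\mathcal{X}^*$, the second part of Lemma~\ref{lemmma:equivilent-lemma} lets me replace $\bm v_t$ on the right-hand side by $[\bm x_t]_{\mathcal{X}^*}$. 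I then bound the linear term by convexity, $\langle\nabla f(\bm x_t),[\bm x_t]_{\mathcal{X}^*}-\bm x_t\rangle\le f(\bm x^*)-f(\bm x_t)=-h(\bm x_t)$, and the quadratic term by the $\mu$-quadratic growth condition, $\|[\bm x_t]_{\mathcal{X}^*}-\bm x_t\|_2^2\le\tfrac{2}{\mu}h(\bm x_t)$. Subtracting $f(\bm x^*)$ and inserting $\eta_t=2/(t+2)$ yields, for all $t\ge 0$,
\[
h(\bm x_{t+1})\le\Bigl(1-\tfrac{2}{t+2}+\tfrac{4L}{\mu(t+2)^2}\Bigr)h(\bm x_t).
\]

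\emph{Step 2 (solving the recurrence).} Unrolling gives $h(\bm x_t)\le h(\bm x_0)\prod_{i=0}^{t-1}\bigl(1-\tfrac{2}{i+2}+\tfrac{4L}{\mu(i+2)^2}\bigr)$. The $i=0$ factor is exactly $L/\mu$ (this is where the special first step $\eta_0=1$ enters), while for $i\ge 1$ I factor $1-\tfrac{2}{i+2}+\tfrac{4L}{\mu(i+2)^2}=\tfrac{i}{i+2}\bigl(1+\tfrac{4L}{\mu\,i(i+2)}\bigr)\le\tfrac{i}{i+2}\exp\!\bigl(\tfrac{4L}{\mu\,i(i+2)}\bigr)$ using $1+x\le e^x$. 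The product $\prod_{i=1}^{t-1}\tfrac{i}{i+2}$ telescopes to $\tfrac{2}{t(t+1)}$, and $\sum_{i=1}^{t-1}\tfrac{1}{i(i+2)}=\tfrac12\bigl(1+\tfrac12-\tfrac1t-\tfrac1{t+1}\bigr)\le\tfrac34$, so the product of exponentials is at most $e^{3L/\mu}$. This gives $h(\bm x_t)\le\tfrac{2(L/\mu)e^{3L/\mu}}{t(t+1)}h(\bm x_0)$ for $t\ge 1$. To reach the stated form I use $t(t+1)\ge\tfrac29(t+2)^2$ (the ratio is minimized at $t=1$) and the elementary inequality $\tfrac94\beta\le e^{\beta}$ with $\beta=L/\mu$, which absorbs the polynomial prefactor into the exponent: $9(L/\mu)e^{3L/\mu}\le 4e^{4L/\mu}$, so $h(\bm x_t)\le\tfrac{4e^{4L/\mu}}{(t+2)^2}h(\bm x_0)$.

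\emph{Main obstacle.} The conceptual content is already packaged in Lemma~\ref{lemmma:equivilent-lemma}: the hypothesis $\|\bm x^*\|_2=C$ is precisely what lets the extreme-point / projection-type oracle treat the projection $[\bm x_t]_{\mathcal{X}^*}$ of the current iterate as a legitimate competitor, which is what buys the extra order of convergence \emph{without} strong convexity of $\mathcal D$. The remaining technical subtlety is the recurrence itself: a naive induction carrying the target bound $\tfrac{4e^{4L/\mu}}{(t+2)^2}$ does not close once $L/\mu$ is not small (the induction step would require roughly $L/\mu\lesssim 1/2$), so one has to pass through the looser intermediate estimate $\tfrac{2(L/\mu)e^{3L/\mu}}{t(t+1)}h(\bm x_0)$ obtained from the $1+x\le e^x$ manipulation and only then upgrade the prefactor to the clean exponential constant. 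One must also treat the first iteration separately, since $\eta_0=1$ makes its ``contraction'' factor equal to $L/\mu\ge 1$ rather than a genuinely contractive quantity.
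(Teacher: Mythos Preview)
Your proof is correct and follows essentially the same approach as the paper: Step~1 is identical (smoothness, Lemma~\ref{lemmma:equivilent-lemma}, convexity, then quadratic growth) and yields the same recurrence $h(\bm x_{t+1})\le\bigl(1-\tfrac{2}{t+2}+\tfrac{4L}{\mu(t+2)^2}\bigr)h(\bm x_t)$. The only cosmetic difference is in solving the product: the paper applies $1+x\le e^x$ directly to each factor and bounds $\sum_{i\ge 0}\tfrac{1}{(i+2)^2}\le 1$ and $\sum_{i=0}^t\tfrac{1}{i+2}\ge\ln\tfrac{t+3}{2}$ to reach $\tfrac{4e^{4L/\mu}}{(t+3)^2}$ in one stroke, whereas you separate the $i=0$ factor, telescope $\prod\tfrac{i}{i+2}$ exactly, and then absorb the residual $L/\mu$ prefactor via $\tfrac{9}{4}\beta\le e^\beta$; both routes are standard and equivalent.
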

\begin{proof}
By the $L$-smoothness of $f$, we have
\begin{align*}
f(\bm x_{t+1}) &\leq f(\bm x_t) + \nabla f(\bm x_t)^\top (\bm x_{t+1} - \bm x_t) + \frac{L}{2} \| \bm x_{t+1} - \bm x_t \|_2^2 \\
&= f(\bm x_t) + \eta_t \nabla f(\bm x_t)^\top \left(\tilde{\bm v}_t - \bm x_t\right) + \frac{L \eta_t^2}{2} \left\| \tilde{\bm v}_t - \bm x_t \right\|_2^2\\
&\overset{(a)}{=} f(\bm x_t) + \eta_t \nabla f(\bm x_t)^\top \left({\bm v}_t - \bm x_t\right) + \frac{L \eta_t^2}{2} \left\| {\bm v}_t - \bm x_t \right\|_2^2\\
&\overset{(b)}{\leq} f(\bm x_t) + \eta_t \nabla f(\bm x_t)^\top \left(\left[{\bm x}_t\right]_{\mathcal{X}^*} - \bm x_t\right) + \frac{L \eta_t^2}{2} \left\| \left[ {\bm x}_t \right]_{\mathcal{X}^*} - \bm x_t \right\|_2^2 \\
&\overset{(c)}{\leq} f(\bm x_t) - \eta_t \left(f(\bm x_t) - f(\left[ {\bm x}_t \right]_{\mathcal{X}^*})\right) + \frac{L \eta_t^2}{2} \left\| \left[ {\bm x}_t \right]_{\mathcal{X}^*} - \bm x_t \right\|_2^2,
\end{align*}
where (a) is due to $\delta=1$, (b) follows from Lemma \ref{lemmma:equivilent-lemma}, and (c) uses the convexity of $f$. By setting the step size $\eta_t = 2/(t+2)$ and add $-f(\bm x^*)$ on both sides, we reach at
\[
h(\bm x_{t+1}) \leq \left(1-\frac{2}{t+2}\right) h(\bm x_t) + \frac{2 L \| \bm x_t - \left[\bm x_t\right]_{\mathcal{X}^*} \|_2^2}{ (t+2)^2}.
\]
By the assumption of quadratic growth property property of $f$, we have 
\[
f(\bm x_t) - f(\bm x^*) \geq \frac{\mu}{2} \| \bm x_t - \left[\bm x \right]_{\mathcal{X}^*}\|_2^2.
\]
Specifically, for all $t \geq 0$
\[
\| \bm x_t - \left[\bm x \right]_{\mathcal{X}^*}\|_2^2 \leq \frac{2}{\mu}\left( f(\bm x_t) - f(\left[\bm x \right]_{\mathcal{X}^*}) \right) = \frac{2}{\mu} h(\bm x_t).
\]
We have
\begin{align*}
h(\bm x_{t+1}) &\leq \left(1-\frac{2}{t+2}\right) h(\bm x_t) + \frac{2 L \| \bm x_t - \left[\bm x \right]_{\mathcal{X}^*}\|_2^2}{ (t+2)^2} \\
&\leq \left(1-\frac{2}{t+2}\right) h(\bm x_t) + \frac{4 L h(\bm x_t)}{\mu (t+2)^2} \\
&= \left(1-\frac{2}{t+2} + \frac{4 L}{\mu (t+2)^2} \right) h(\bm x_t) \\
&= \prod_{i=0}^t \left(1-\frac{2}{i+2} + \frac{4 L}{\mu (i+2)^2} \right) h(\bm x_0) \\
&\leq \exp\left\{\sum_{i=0}^t \left(- \frac{2}{i+2} + \frac{4L}{\mu (i+2)^2}\right)\right\} \cdot h(\bm x_0) \\ 
&\leq \frac{4 e^{4 L/\mu}}{(t+3)^2} \cdot h(\bm x_0).
\end{align*}
\end{proof}

\begin{theorem}
Let $f$ be convex and satisfies quadratic growth condition. Assume that 
$D_* = \min_{\bm x^* \in \mathcal{X}^*} \| \bm x^*\|_2$, then if $\delta=1$ and $\eta_t = 2/(t+2)$, \textsc{DMO-AccFW-I} has the following convergence rate 
\begin{equation}
h(\bm x_{t}) \leq \min\left\{ \frac{3 L e^{2 L / \mu} (C^2 -D_*^2)}{t+2} + \frac{4 e^{4 L/\mu} h(\bm x_0)}{(t+2)^2}, \frac{ 2 L(5 C^2 - D_*^2)}{(t+2)}\right\}
\end{equation}
for all $t \geq 1$.
\end{theorem}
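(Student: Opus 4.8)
The plan is to produce two independent recursions for the primal error $h(\bm x_t)$, one using the quadratic growth condition and one not, and to take the minimum. Both start from the general inequality in Lemma~\ref{lemmma:equivilent-lemma} (valid here since $\delta=1$, so $\tilde{\bm v}_t=\bm v_t$): combining it with $L$-smoothness, $f(\bm x_{t+1})-f(\bm x_t)\le\eta_t\langle\bm v_t-\bm x_t,\nabla f(\bm x_t)\rangle+\tfrac{L\eta_t^2}{2}\|\bm v_t-\bm x_t\|_2^2$, and then choosing $\bm v=[\bm x_t]_{\mathcal{X}^*}\in\mathcal{X}^*\subseteq\mathcal{D}$ inside the $\min_{\bm v\in\mathcal{D}}$ gives
\[
h(\bm x_{t+1})\le h(\bm x_t)+\eta_t\langle[\bm x_t]_{\mathcal{X}^*}-\bm x_t,\nabla f(\bm x_t)\rangle+\tfrac{L\eta_t^2}{2}\|[\bm x_t]_{\mathcal{X}^*}-\bm x_t\|_2^2+\tfrac{L\eta_t^2}{2}\bigl(C^2-\|[\bm x_t]_{\mathcal{X}^*}\|_2^2\bigr).
\]
Convexity of $f$ turns the inner product into $f(\bm x^*)-f(\bm x_t)=-h(\bm x_t)$, and since every point of $\mathcal{D}$ has norm at most $C$ while $\|[\bm x_t]_{\mathcal{X}^*}\|_2\ge D_*$, the last slack term lies in $[0,\,C^2-D_*^2]$. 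Hence $h(\bm x_{t+1})\le(1-\eta_t)h(\bm x_t)+\tfrac{L\eta_t^2}{2}\|[\bm x_t]_{\mathcal{X}^*}-\bm x_t\|_2^2+\tfrac{L\eta_t^2}{2}(C^2-D_*^2)$.

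For the first term of the $\min$, I apply quadratic growth, $\|[\bm x_t]_{\mathcal{X}^*}-\bm x_t\|_2^2\le\tfrac{2}{\mu}h(\bm x_t)$, so with $\eta_t=2/(t+2)$ the recursion reads $h(\bm x_{t+1})\le\bigl(1-\tfrac{2}{t+2}+\tfrac{4L}{\mu(t+2)^2}\bigr)h(\bm x_t)+\tfrac{2L(C^2-D_*^2)}{(t+2)^2}$. I then unroll this non-homogeneous recurrence exactly as in the proof of the boundary case: the homogeneous contribution is bounded by $\exp\bigl\{\sum_{i=0}^t(-\tfrac{2}{i+2}+\tfrac{4L}{\mu(i+2)^2})\bigr\}h(\bm x_0)\le\tfrac{4e^{4L/\mu}}{(t+2)^2}h(\bm x_0)$ via $\sum\tfrac{1}{i+2}\ge\ln(t+3)-\ln2$ and $\sum\tfrac{1}{(i+2)^2}\le1$; each forcing term $\tfrac{2L(C^2-D_*^2)}{(i+2)^2}$ is propagated by a factor $\prod_{j=i+1}^t(1-\tfrac{2}{j+2}+\tfrac{4L}{\mu(j+2)^2})\le\tfrac{(i+3)^2}{(t+3)^2}e^{2L/\mu}$, where the exponent drops to $2L/\mu$ because $\sum_{j\ge i+1}\tfrac{1}{(j+2)^2}\le\tfrac{1}{i+2}\le\tfrac12$; summing $\sum_{i=0}^t\tfrac{(i+3)^2}{(i+2)^2}=(t+1)+2\sum\tfrac{1}{i+2}+\sum\tfrac{1}{(i+2)^2}\le t+2+2\ln(t+2)$ and checking $\tfrac{2(t+2+2\ln(t+2))}{(t+3)^2}\le\tfrac{3}{t+2}$ for all $t\ge1$ yields the bound $\tfrac{3Le^{2L/\mu}(C^2-D_*^2)}{t+2}+\tfrac{4e^{4L/\mu}h(\bm x_0)}{(t+2)^2}$. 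For the second term, I instead bound $\|[\bm x_t]_{\mathcal{X}^*}-\bm x_t\|_2^2\le(2C)^2$ crudely (both points lie in $\mathcal{D}$), getting $h(\bm x_{t+1})\le(1-\tfrac{2}{t+2})h(\bm x_t)+\tfrac{2L(5C^2-D_*^2)}{(t+2)^2}$, and Lemma~\ref{lemma:currence-inequality} with $\delta=1$ immediately gives $h(\bm x_t)\le\tfrac{2L(5C^2-D_*^2)}{t+2}$.

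The main obstacle is the constant-chasing in the unrolling of the non-homogeneous recurrence for the first term: getting the propagation factor down to exactly $\tfrac{(i+3)^2}{(t+3)^2}e^{2L/\mu}$ (the split of the exponent into $4L/\mu$ for the homogeneous part and $2L/\mu$ for the forcing part is precisely what produces the two different exponents in the statement), and then handling the spurious $\ln(t+2)$ that appears in the tail sum $\sum_{i=0}^t(i+3)^2/(i+2)^2$ and showing it is dominated by the clean $\tfrac{3}{t+2}$ coefficient for every $t\ge1$; the base case $t=1$ of this and of the homogeneous estimate must be verified directly (alternatively one can replace the explicit unrolling by a short induction on the ansatz $h(\bm x_t)\le a/(t+2)+b/(t+2)^2$). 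All remaining steps — the smoothness and convexity manipulations, and the second, quadratic-growth-free recursion — are routine.
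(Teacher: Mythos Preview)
Your proposal is correct and follows essentially the same route as the paper's proof: both derive the master inequality $h(\bm x_{t+1})\le(1-\eta_t)h(\bm x_t)+\tfrac{L\eta_t^2}{2}\|\bm x^*-\bm x_t\|_2^2+\tfrac{L\eta_t^2}{2}(C^2-\|\bm x^*\|_2^2)$ from smoothness plus Lemma~\ref{lemmma:equivilent-lemma}, then branch into (i) the quadratic-growth recursion unrolled with the same product bound $\prod_{j=i+1}^t(\cdot)\le\tfrac{(i+3)^2}{(t+3)^2}e^{2L/\mu}$ and the same tail-sum estimate $\sum_{i=0}^t\tfrac{(i+3)^2}{(i+2)^2}\le t+2+2\ln(t+2)$, and (ii) the diameter bound $\|\bm x^*-\bm x_t\|_2^2\le 4C^2$ giving the standard $\mathcal{O}(1/t)$ rate. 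Your choice to work with $[\bm x_t]_{\mathcal{X}^*}$ rather than a generic $\bm x^*$ is in fact slightly cleaner, since it makes the application of quadratic growth exact; the paper uses $\bm x^*$ in the same role without saying so explicitly.
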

\begin{proof}
By setting $\delta=1$, by $L$-smooth, we have
\begin{align*}
f(\bm x_{t+1}) &\leq f(\bm x_t) + \eta_t \langle \nabla f(\bm x_t), \bm v_t - \bm x_t \rangle + \frac{L \eta_t^2}{2} \| \bm v_t - \bm x_t \|_2^2 \\
&\leq f(\bm x_t) +  \min_{\bm v \in \mathcal{D}} \left\{\eta_t \langle \nabla f(\bm x_t), \bm v - \bm x_t \rangle + \frac{L \eta_t^2}{2} \| \bm v - \bm x_t \|_2^2 + \frac{L \eta_t^2}{2}(C^2 - \|\bm v\|_2^2)\right\} \\
&\leq f(\bm x_t) +  \eta_t \langle \nabla f(\bm x_t), \bm x^* - \bm x_t \rangle + \frac{L \eta_t^2}{2}\|\bm x^* - \bm x_t \|_2^2 + \frac{L \eta_t^2}{2}\left(C^2 - \|\bm x^*\|_2^2\right).
%&\leq f(\bm x_t) +  \eta_t \langle \nabla f(\bm x_t), [\bm x_t]_{\mathcal{X}^*} - \bm x_t \rangle + \frac{L \eta_t^2}{2}\|[\bm x_t]_{\mathcal{X}^*} - \bm x_t \|_2^2 + \frac{L \eta_t^2}{2}\left(C^2 - \|[\bm x_t]_{\mathcal{X}^*}\|_2^2\right).
\end{align*}
That is,
\begin{equation}
h(\bm x_{t+1}) \leq (1 - \eta_t) h(\bm x_t) + \frac{L \eta_t^2}{2} \left(\|\bm x^* - \bm x_t \|_2^2 + C^2 - \|\bm x^*\|_2^2\right). \label{inequ:thm:15-1}
\end{equation}
By the quadratic growth condition, we have
\[
h(\bm x_{t+1}) \leq (1 - \eta_t) h(\bm x_t) + \frac{L \eta_t^2 h(\bm x_t)}{\mu}  + \frac{L \eta_t^2}{2} \left(\|\bm v_t\|_2^2 - \|\bm x^*\|_2^2\right).
\]
Let $\eta_t = \tfrac{2}{(t+2)}$ and $D_* =\|\bm x^*\|_2$, we have
\begin{align*}
h(\bm x_{t+1}) &\leq \left( 1 - \frac{2}{t+2} + \frac{4 L}{\mu (t+2)^2}\right) h(\bm x_t) + \frac{2 L\left( C^2 - D_*^2 \right)}{(t+2)^2}  \\
&\leq \prod_{i=0}^t \left(1-\frac{2}{i+2} + \frac{4 L}{\mu (i+2)^2} \right) h(\bm x_0) +\sum_{i=0}^t \left\{ \frac{2 L (C^2 - D_*^2) }{ (i+2)^2}   \cdot \prod_{j=i+1}^t \left(1-\frac{2}{j+2} + \frac{4 L}{\mu (j+2)^2} \right) \right\} \\
&\leq \frac{4 e^{4 L/\mu}}{(t+3)^2} h(\bm x_0) + \sum_{i=0}^t \left\{ \frac{2 L (C^2 - D_*^2)}{ (i+2)^2} \cdot \prod_{j=i+1}^t \left(1-\frac{2}{j+2} + \frac{4 L}{\mu (j+2)^2} \right) \right\} \\
&\leq \frac{4 e^{4 L/\mu}}{(t+3)^2} h(\bm x_0) + \sum_{i=0}^t \left\{ \frac{2 L (C^2-D_*^2)}{ (i+2)^2} \frac{(i+3)^2}{(t+3)^2} \cdot \operatorname{exp}\left\{ \frac{4 L}{\mu} \left(\frac{1}{i+2} - \frac{1}{t+2}\right)\right\}\right\} \\
&\leq \frac{4 e^{4 L/\mu}}{(t+3)^2} h(\bm x_0) + e^{2 L / \mu} \sum_{i=0}^t \frac{2 L (C^2 -D_*^2) (i+3)^2}{ (i+2)^2(t+3)^2} \\
&\leq \frac{4 e^{4 L/\mu}}{(t+3)^2} h(\bm x_0) + 2 L e^{2 L / \mu} (C^2 -D_*^2) \frac{t+1 + 2\ln(t+2) + 1}{(t+3)^2} \\
&\leq \frac{4 e^{4 L/\mu}}{(t+3)^2} h(\bm x_0) + \frac{3 L e^{2 L / \mu} (C^2 -D_*^2)}{t+3}.
\end{align*}
On the other hand, from (\ref{inequ:thm:15-1}), we have
\begin{equation}
h(\bm x_{t+1}) \leq (1 - \frac{2}{t+2}) h(\bm x_t) + \frac{ 2 L(5 C^2 - D_*^2)}{(t+2)^2}.
\end{equation}
Hence, for all $t \geq 1$, we have
\[
h(\bm x_t) \leq \frac{ 2 L(5 C^2 - D_*^2)}{(t+2)}.
\]
Combine these two bounds, we prove the theorem.
\end{proof}

\begin{reptheorem}{thm:5.9}[Practical rate of \textsc{DMO-AccFW-I}]
When $\delta \in (0,1)$ and $\| \nabla f(\bm x)\|_\infty \leq B$ , then \textsc{DMO-AccFW-I} admits
\begin{equation}
h(\bm x_t) \leq \mathcal{O}\left( \sqrt{s}B C(1-\delta)/\delta \right). \nonumber
\end{equation}
Moreover, when $\| \nabla f(\bm x)\|_\infty \leq B/t^{\nu}$ with $\nu \in (0,1]$, we have
\begin{equation}
h(\bm x_t) \leq \mathcal{O}\left(\sqrt{s} B C/ t^\nu \right). \nonumber
\end{equation}
\end{reptheorem}
\begin{proof}
Notice that, at $t$-th iteration, by the property of $(\delta,-\left(\bm x_t - \nabla f(\bm x_t)/(L \eta_t)\right), \mathcal{D})$-DMO operator, we have the following
\begin{align*}
\left\langle \tilde{\bm v}_t, - \left( \bm x_t - \frac{\nabla f(\bm x_t)}{L \eta_t}\right) \right\rangle \leq \delta \cdot \min_{\bm v \in \mathcal{D}} \left\langle \bm v, -\left(\bm x_t - \frac{\nabla f(\bm x_t)}{L \eta_t}\right) \right\rangle,
\end{align*}
where the inequality is due to $(\delta,-\bm x_t + \nabla f(\bm x_t)/(L \eta_t),\mathcal{D})$-DMO oracle in Line 4 of Algorithm \ref{algo:approx-fw}. We continue to have
\begin{align*}
&L \eta_t^2 \left\langle \tilde{\bm v}_t, - \left( \bm x_t - \frac{\nabla f(\bm x_t)}{L \eta_t}\right) \right\rangle \leq \delta \cdot \min_{\bm v \in \mathcal{D}} L \eta_t^2 \left\langle \bm v, -\left(\bm x_t - \frac{\nabla f(\bm x_t)}{L \eta_t}\right) \right\rangle \\
\Leftrightarrow \quad&\frac{C^2 L \eta_t^2}{2} + L \eta_t^2 \left\langle \tilde{\bm v}_t, - \left( \bm x_t - \frac{\nabla f(\bm x_t)}{L \eta_t}\right) \right\rangle + \frac{L \eta_t^2}{2} \left\| \bm x_t - \frac{\nabla f(\bm x_t)}{L \eta_t} \right\|_2^2 \\
&\leq \delta \cdot \min_{\bm v \in \mathcal{D}} \left\{  \frac{L \eta_t^2\|\bm v\|_2^2}{2} +  L \eta_t^2 \left\langle \bm v, -\left(\bm x_t - \frac{\nabla f(\bm x_t)}{L \eta_t}\right) \right\rangle + \frac{L \eta_t^2}{2} \left\| \bm x_t - \frac{\nabla f(\bm x_t)}{L \eta_t} \right\|_2^2 -  \frac{L \eta_t^2 \|\bm v\|_2^2}{2} \right\} \\
&\quad + \frac{C^2 L \eta_t^2}{2}  + \frac{L \eta_t^2(1-\delta)}{2} \left\| \bm x_t - \frac{\nabla f(\bm x_t)}{L \eta_t}\right\|_2^2.
\end{align*}
That is,
\begin{align}
&\frac{L \eta_t^2}{2} \| \tilde{\bm v}_t - \bm x_t \|_2^2 + \eta_t \langle \tilde{\bm v_t} - \bm x_t, \nabla f(\bm x_t) \rangle + \frac{\| \nabla f(\bm x_t) \|_2^2}{2 L} \nonumber\\
&\leq \delta \cdot\min_{\bm v \in \mathcal{D}} \left\{ \frac{L \eta_t^2}{2} \| {\bm v} - \bm x_t \|_2^2 + \eta_t \langle {\bm v} - \bm x_t, \nabla f(\bm x_t) \rangle + \frac{\| \nabla f(\bm x_t) \|_2^2}{2 L} - \frac{L \eta_t^2 \|\bm v\|_2^2}{2} \right\}\nonumber\\
&\quad + \frac{C^2 L \eta_t^2}{2}  + \frac{L \eta_t^2(1-\delta)}{2} \left\| \bm x_t - \frac{\nabla f(\bm x_t)}{L \eta_t}\right\|_2^2 \nonumber\\
\Leftrightarrow \quad& \frac{L \eta_t^2}{2} \| \tilde{\bm v}_t - \bm x_t \|_2^2 + \eta_t \langle \tilde{\bm v_t} - \bm x_t, \nabla f(\bm x_t) \rangle \nonumber\\
&\leq \delta \cdot\min_{\bm v \in \mathcal{D}} \left\{ \frac{L \eta_t^2}{2} \| {\bm v} - \bm x_t \|_2^2 + \eta_t \langle {\bm v} - \bm x_t, \nabla f(\bm x_t) \rangle - \frac{L \eta_t^2 \|\bm v\|_2^2}{2} \right\}\nonumber\\
&\quad + \frac{C^2 L \eta_t^2}{2}  + (1-\delta)\left( \frac{L \eta_t^2}{2} \left\| \bm x_t - \frac{\nabla f(\bm x_t)}{L \eta_t}\right\|_2^2 - \frac{\| \nabla f(\bm x_t)\|_2^2}{2 L}\right) \label{inequ:approximate-inequality}
\end{align}
Define $Q_t = (1-\delta)\left( \frac{L \eta_t^2}{2} \left\| \bm x_t - \frac{\nabla f(\bm x_t)}{L \eta_t}\right\|_2^2 - \frac{\| \nabla f(\bm x_t)\|_2^2}{2 L}\right).$ By the $L$-smooth of $f$, we have
\begin{align*}
f(\bm x_{t+1}) &\leq f(\bm x_t) + \nabla f(\bm x_t)^\top (\bm x_{t+1} - \bm x_t) + \frac{L}{2} \| \bm x_{t+1} - \bm x_t \|_2^2 \\
&= f(\bm x_t) + \eta_t \nabla f(\bm x_t)^\top \left(\tilde{\bm v}_t - \bm x_t\right) + \frac{L \eta_t^2}{2} \left\| \tilde{\bm v}_t - \bm x_t \right\|_2^2\\
&\overset{(a)}{\leq} f(\bm x_t) + \delta\cdot \min_{\bm v \in \mathcal{D}} \left\{ \eta_t \nabla f(\bm x_t)^\top \left(\bm v - \bm x_t\right) + \frac{L \eta_t^2}{2} \left\| \bm v - \bm x_t \right\|_2^2 - \frac{\delta L \eta_t^2 \| \bm v\|_2^2}{2} \right\} + \frac{C^2 L \eta_t^2}{2} + Q_t\\
&\overset{(b)}{\leq} f(\bm x_t) + \delta \cdot \eta_t \nabla f(\bm x_t)^\top \left(\bm x^* - \bm x_t\right) + \delta \cdot  \frac{L \eta_t^2}{2} \left\| \bm x^* - \bm x_t \right\|_2^2 + \frac{C^2 L \eta_t^2}{2} - \frac{\delta L \eta_t^2 \| \bm x^*\|_2^2}{2} + Q_t \\
&\overset{(c)}{\leq} f(\bm x_t) - \delta \cdot \eta_t \left(f(\bm x_t) - f(\bm x^*)\right) + \delta \cdot \frac{L \eta_t^2}{2} \left\| \bm x^* - \bm x_t \right\|_2^2 + \frac{C^2 L \eta_t^2}{2} - \frac{\delta L \eta_t^2 \| \bm x^* \|_2^2}{2} + Q_t \\
\Leftrightarrow h(\bm x_{t+1}) &\leq \left(1- \delta \eta_t + \frac{\delta L \eta_t^2}{\mu} \right) h(\bm x_t) + \frac{L \eta_t^2(C^2 - \delta D_*^2)}{2} + Q_t,
\end{align*}
where (a) is due to (\ref{inequ:approximate-inequality}), (b) follows by $\bm x^*\in \mathcal{D}$, and (c) uses the convexity property. The last inequality is due to the quadratic growth condition and letting step size $\eta_t  = 2/(t+2)$. Hence, we reach the following recurrence relation
\[
h(\bm x_{t+1}) \leq \left(1-\frac{2\delta}{t+2} + \frac{4 \delta L}{\mu (t+2)^2} \right) h(\bm x_t) + \frac{2 L (C^2 - \delta D_*^2)}{(t+2)^2} + (1-\delta)\left( \frac{2 L \| \bm x_t\|_2^2}{(t+2)^2}  + \frac{2 \langle - \bm x_t, \nabla f(\bm x_t) \rangle}{(t+2)}  \right)
\]
Notice that 
\[
\langle - \bm x_t, \nabla f(\bm x_t) \rangle = \sum_{i=0}^{t-1} \frac{2 (i+1)}{t(t+1)} \left\langle \frac{ {{\bm z}_t}_{S_i}}{\|{{\bm z}_t}_{S_i}\|_2}, \nabla f(\bm x_t) \right\rangle,
\]
where $\bm z_t = - C \left(\bm x_t - \frac{\nabla f(\bm x_t)}{L \eta_t}\right)$. We continue to have
\begin{align*}
\langle - \bm x_t, \nabla f(\bm x_t) \rangle &= \sum_{i=0}^{t-1} \frac{2 (i+1)}{t(t+1)} \left\langle \frac{ {{\bm z}_t}_{S_i}}{\|{{\bm z}_t}_{S_i}\|_2}, \nabla f(\bm x_t) \right\rangle \\
&\leq \sum_{i=0}^{t-1} \frac{2 (i+1)}{t(t+1)} \| \nabla f(\bm x_t)_{S_i}\|_2 \leq \sqrt{s} C \|\nabla f(\bm x_t)\|_\infty,
\end{align*}
where $s$ is the maximum number of nonzeros allowed in the sparsity pattern defined by $\mathbb{M}$. We have
\[
h(\bm x_{t+1}) \leq \left(1-\frac{2\delta}{t+2} + \frac{4\delta L}{\mu (t+2)^2} \right) h(\bm x_t) + \frac{2 L (C^2 - \delta D_*^2)}{(t+2)^2} + (1-\delta)\left( \frac{2 L C^2}{ (t+2)^2}  + \frac{2 \sqrt{s} C M_t}{(t+2)}  \right).
\]
Again, let $T = 2 L (C^2 - \delta D_*^2) + 2 L C^2(1-\delta)$ using the recurrence relation, we have
\begin{align*}
h(\bm x_{t+1}) &\leq \prod_{i=0}^t \left(1-\frac{2\delta}{i+2} + \frac{4 \delta L} {\mu (i+2)^2} \right) h(\bm x_0) \\
&\quad + \sum_{i=0}^t \left\{ \left( \frac{T}{(i+2)^2} + \frac{2 \sqrt{s} C M_i (1-\delta)}{(i+2)}  \right)\cdot \prod_{j=i+1}^t \left(1-\frac{2\delta}{j+2} + \frac{4 \delta L}{\mu (j+2)^2} \right) \right\} \\
&\leq \frac{4 e^{4 \delta L/\mu}}{(t+3)^{2\delta}} h(\bm x_0) +  \sum_{i=0}^t \left\{ \left( \frac{T}{ (i+2)^2}  + \frac{2 \sqrt{s} C M_i(1-\delta)}{(i+2)}  \right)\cdot \prod_{j=i+1}^t \left(1-\frac{2\delta}{j+2} + \frac{4 \delta L}{\mu (j+2)^2} \right) \right\} \\
&\leq \frac{4 e^{4 \delta L/\mu}}{(t+3)^{2\delta}} h(\bm x_0) +  \sum_{i=0}^t \left\{ \left( \frac{T}{ (i+2)^2}  + \frac{2 \sqrt{s} C M_i(1-\delta)}{(i+2)}  \right) \left( \frac{(i+3)^{2\delta}}{(t+3)^{2\delta}} \cdot \operatorname{exp}\left\{ \frac{4 \delta L}{\mu} \left(\frac{1}{i+2} - \frac{1}{t+2}\right)\right\} \right) \right\} \\
&\leq \frac{4 e^{4 \delta L/\mu}}{(t+3)^{2\delta}} h(\bm x_0) +  e^{2\delta L / \mu} \sum_{i=0}^t \left\{ \frac{T (i+3)^{2\delta}}{ (i+2)^2(t+3)^{2\delta}}  + \frac{2 \sqrt{s} C M_i (1-\delta) (i+3)^{2\delta}}{(i+2)(t+3)^{2\delta}} \right\} \\
&\leq \frac{4 e^{4 \delta L/\mu}}{(t+3)^{2\delta}} h(\bm x_0) + e^{2\delta L / \mu} \mathcal{O}\left(\frac{2 \sqrt{s} C M_i (1-\delta)}{2\delta}\right) \\
&\leq \mathcal{O}\left( \frac{\sqrt{s} C M_{\bar{t}}(1-\delta)}{\delta}\right) = \leq \mathcal{O}\left( \frac{\sqrt{s} B C(1-\delta)}{\delta}\right), 
\end{align*}
where $M_{\bar{t}} = \max_{i \in [t]} \|\nabla f(\bm x_t)\|_\infty$. The above analysis indicates that the worse case bound is $\mathcal{O}(\sqrt{s}M_{\max})$. However, when $M_t \leq B/t^{\mu}$ with $\mu \in (0,1]$, we have an optimistic bound
\begin{equation}
h(\bm x_t) \leq \mathcal{O}\left(\frac{\sqrt{s} B C}{t^\mu}\right).
\end{equation}
Combine two above bounds, we finish the proof.
\end{proof}

\begin{reptheorem}{thm:5.10}[Convergence of \textsc{DMO-AccFW-II}]
When $\delta \in (0,1)$, then \textsc{DMO-AccFW-II} finds $\bm x_t \in \mathcal{D} /\delta$ and admits 
\begin{equation}
h(\bm x_t) \leq \frac{4 e^{4 L /\mu}}{(t+3)^2} h(\bm x_0) + \frac{28 L^2 (C^2/\delta^2 - D_*^2)}{ 5 \mu (t+3)}.
\end{equation}
\end{reptheorem}
\begin{proof}
By $L$-smooth, we have
\begin{align}
f(\bm x_{t+1}) &\leq f(\bm x_t) + \nabla f(\bm x_t)^\top(\bm x_{t+1} - \bm x_t) + \frac{L}{2}\|\bm x_{t+1} - \bm x_t \|_2^2 \nonumber\\
&\leq f(\bm x_t) + \eta_t \nabla f(\bm x_t)^\top\left(\frac{\tilde{\bm v}_t}{\delta} - \bm x_t\right) + \frac{L \eta_t^2}{2}\left\| \frac{\tilde{\bm v}_t}{\delta} - \bm x_t \right\|_2^2 \label{inequ:thm:20-1}
\end{align}
By the $(\delta, -(\bm x_t - \tfrac{ \nabla f(\bm x_t) }{L \eta_t}))${-DMO}, we have
\[
\left\langle \tilde{\bm v}_t, - \bm x_t +  \frac{ \nabla f(\bm x_t) }{L \eta_t} \right\rangle \leq \delta \cdot \min_{\bm v \in \mathcal{D}} \left\langle {\bm v}, - \bm x_t + \frac{ \nabla f(\bm x_t) }{L \eta_t} \right\rangle
\]
We have the following equivalent form
\[
\frac{L \eta_t^2}{2} \left\| \frac{\tilde{\bm v}_t}{\delta} - \bm x_t + \frac{ \nabla f(\bm x_t) }{L \eta_t} \right\|_2^2 \leq \min_{\bm v \in \mathcal{D}} \left\{\frac{L \eta_t^2}{2} \left\| \bm v - \bm x_t + \frac{ \nabla f(\bm x_t) }{L \eta_t} \right\|_2^2 + \frac{L \eta_t^2 C^2}{2\delta^2} - \frac{L \eta_t^2}{2} \| \bm v\|_2^2 \right\},
\]
which could be simplified as the following
\[
\eta_t \left\langle \nabla f(\bm x_t), \frac{\tilde{\bm v}_t}{\delta} - \bm x_t \right\rangle + \frac{L\eta_t^2}{2} \left\| \frac{\tilde{\bm v}_t}{\delta} - \bm x_t\right\|_2^2 \leq \min_{\bm v \in \mathcal{D}} \left\{ \eta_t \langle \nabla f(\bm x_t), \bm v - \bm x_t \rangle + \frac{L\eta_t^2}{2}\left\|\bm v - \bm x_t\right\|_2^2 + \frac{L \eta_t^2 C^2}{2\delta^2} - \frac{L \eta_t^2}{2} \| \bm v\|_2^2 \right\}.
\]
Therefore, we continue to have
\begin{align*}
f(\bm x_{t+1}) &\leq f(\bm x_t) + \min_{\bm v \in \mathcal{D}} \left\{ \eta_t \langle \nabla f(\bm x_t), \bm v - \bm x_t \rangle + \frac{L\eta_t^2}{2}\left\|\bm v - \bm x_t\right\|_2^2 + \frac{L \eta_t^2 C^2}{2\delta^2} - \frac{L \eta_t^2}{2} \| \bm v\|_2^2 \right\} \\
&\leq f(\bm x_t) - \eta_t (f(\bm x_t) - f(\bm x^*)) + \frac{L\eta_t^2}{2}\left\|\bm x^* - \bm x_t\right\|_2^2 + \frac{L \eta_t^2 C^2}{2\delta^2} - \frac{L \eta_t^2}{2} \| \bm x^*\|_2^2.
\end{align*}
Set $\eta_t = 2/(t+2)$ and use the $\mu$-quadratic growth condition, we have
\[
h(\bm x_{t+1}) \leq \left( 1 - \frac{2}{t+2} + \frac{2 L }{\mu (t+2)^2} \right) h(\bm x_{t}) + \frac{2L}{(t+2)^2} \left(\frac{C^2}{\delta^2} - D_*^2\right).
\]
Letting $B = 2 L (\tfrac{C^2}{\delta^2} - D_*^2)$ and applying the above recurrence relation, we continue to have
\begin{align*}
h(\bm x_{t+1}) &\leq \prod_{i=0}^t \left(1-\frac{2}{i+2} + \frac{4 L} {\mu (i+2)^2} \right) h(\bm x_0) + \sum_{i=0}^t \left\{ \frac{B}{(i+2)^2} \cdot \prod_{j=i+1}^t \left(1-\frac{2}{j+2} + \frac{4 L}{\mu (j+2)^2} \right) \right\} \\
&\leq \frac{4 e^{4 L /\mu}}{(t+3)^2} h(\bm x_0) + \sum_{i=0}^t \frac{2 L B (i+3)^2}{ \mu (i+2)^2 (t+3)^2} \\
&\leq \frac{4 e^{4 L /\mu}}{(t+3)^2} h(\bm x_0) + \frac{2 L B ( t+2 + \ln(t+2))}{ \mu (t+3)^2} \\
&\leq \frac{4 e^{4 L /\mu}}{(t+3)^2} h(\bm x_0) + \frac{14 L B }{ 5 \mu (t+3)},
\end{align*}
where we use the fact that $t+2 + \ln(t+2) \leq 7(t+3) / 5$. Replacing $B$ by $2 L \left(\tfrac{C^2}{\delta^2} - D_*^2\right)$, we prove the theorem.
\end{proof}

\subsection{High probability parameter estimation}

\begin{corollary}[High probability parameter estimation]
Let $h(\bm x_t)$ be primal error for \textsc{DMO-FW} or \textsc{DMO-AccFW}. The estimation error of the graph-structured linear sensing problem admits
\begin{equation}
\| \tilde{\bm x}^* - \bm x_t \|_2 \leq \sqrt{\frac{2 \sqrt{s}C \| \nabla f(\tilde{\bm x}^*)\|_\infty}{\mu}} + \sqrt{\frac{2 h(\bm x_t)}{\mu}}, 
\end{equation}
Moreover, with large enough $n$, there exists an universal constant $c$ such that with high probability:
\[
\| \tilde{\bm x}^* - \bm x_t \|_2 \leq \left( \sqrt{\frac{4 \sigma C \sqrt{s \log d / n}}{\frac{1}{2}} - \frac{c s \log d}{n}} + \sqrt{\frac{2 h(\bm x_t)}{\frac{1}{2} - \frac{c s \log d}{n}}} \right).
\]
\end{corollary}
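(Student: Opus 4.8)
The plan is to decompose the estimation error into an \emph{optimization} part, governed by the primal gap $h(\bm x_t)$, and a \emph{statistical} part, governed by $\|\nabla f(\tilde{\bm x}^*)\|_\infty = \|\bm A^\top\bm e\|_\infty$, and then to read off the two parts' sizes from standard Gaussian-design concentration. The two ingredients needed for the decomposition are: (i) the $\mu$-quadratic growth condition, which for the quadratic loss $f(\bm x)=\tfrac12\|\bm A\bm x-\bm y\|_2^2$ is exactly a restricted-strong-convexity statement and is meaningful on $\mathcal D$ because every $\bm v\in\mathcal D$ satisfies $\|\bm v\|_1\le\sqrt s\,\|\bm v\|_2$; and (ii) the closed form of the linear oracle over $\mathcal D$ from Section 3, namely $\min_{\bm v\in\mathcal D}\langle\bm z,\bm v\rangle=-C\max_{S\in\mathbb M}\|\bm z_S\|_2$, together with $\|\bm z_S\|_2\le\sqrt{|S|}\,\|\bm z\|_\infty\le\sqrt s\,\|\bm z\|_\infty$.

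Concretely, I would apply the second-order lower bound of $f$ at $\tilde{\bm x}^*$ and evaluate it at $\bm x_t$:
\[
\tfrac{\mu}{2}\,\|\tilde{\bm x}^*-\bm x_t\|_2^2\;\le\;\bigl(f(\bm x_t)-f(\tilde{\bm x}^*)\bigr)+\langle\nabla f(\tilde{\bm x}^*),\,\tilde{\bm x}^*-\bm x_t\rangle .
\]
For the first term, since $\bm x^*=\arg\min_{\bm x\in\mathcal D}f(\bm x)$ and $\tilde{\bm x}^*\in\mathcal D$, we have $f(\bm x^*)\le f(\tilde{\bm x}^*)$, hence $f(\bm x_t)-f(\tilde{\bm x}^*)\le f(\bm x_t)-f(\bm x^*)=h(\bm x_t)$. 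For the second term, $\tilde{\bm x}^*$ and $\bm x_t$ both lie in $\mathcal D$, so by ingredient (ii) each of $|\langle\nabla f(\tilde{\bm x}^*),\tilde{\bm x}^*\rangle|$ and $|\langle\nabla f(\tilde{\bm x}^*),\bm x_t\rangle|$ is at most $C\sqrt s\,\|\nabla f(\tilde{\bm x}^*)\|_\infty$; bounding the cross term this way (and tightening the absolute constant by absorbing the $\tilde{\bm x}^*$-part into the optimality of $\bm x^*$) gives $\langle\nabla f(\tilde{\bm x}^*),\tilde{\bm x}^*-\bm x_t\rangle\le \sqrt s\,C\,\|\nabla f(\tilde{\bm x}^*)\|_\infty$ up to an absolute constant. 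Dividing by $\mu/2$, taking square roots, and using $\sqrt{a+b}\le\sqrt a+\sqrt b$ yields the first displayed inequality.

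For the high-probability bound I would substitute the two classical Gaussian estimates. Because $\nabla f(\tilde{\bm x}^*)=-\bm A^\top\bm e$ with $\bm e\sim\mathcal N(\bm 0,\sigma^2\bm I)$, each coordinate of $\nabla f(\tilde{\bm x}^*)$ is mean-zero sub-Gaussian with variance $\Theta(\sigma^2/n)$, so a union bound over the $d$ coordinates gives $\|\nabla f(\tilde{\bm x}^*)\|_\infty\lesssim\sigma\sqrt{(\log d)/n}$ with high probability, and hence $\sqrt s\,\|\nabla f(\tilde{\bm x}^*)\|_\infty\lesssim\sigma\sqrt{(s\log d)/n}$. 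For the growth constant, when $n\gtrsim s\log d$ the Gaussian matrix satisfies a restricted eigenvalue bound with constant $\mu\ge\tfrac12-\tfrac{c\,s\log d}{n}$ for a universal $c$; I would obtain this by the usual $\varepsilon$-net argument on each $s$-dimensional coordinate subspace followed by a union bound over the supports in $\mathbb M$ (alternatively, by the $\ell_1$-tolerant form of restricted strong convexity applied with the $\ell_1$-radius bound $\|\tilde{\bm x}^*-\bm x_t\|_1\le 2\sqrt s\,C$ valid on $\mathcal D$). Plugging both facts into the first bound gives the stated inequality.

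The main obstacle is part (i): certifying the growth/restricted-eigenvalue constant \emph{uniformly over all graph-structured supports in $\mathbb M$} rather than over a single $s$-set. Because $\mathbb M$ can be combinatorially rich, the union bound must be arranged so that the sample complexity is driven by $s\log d$ — using only that $|S|\le s$ for every $S\in\mathbb M$, so the relevant set of directions sits inside a union of $s$-dimensional subspaces to which the standard sparse restricted-isometry estimate applies — and not by $\log|\mathbb M|$. A secondary subtlety is that the iterate $\bm x_t$ is a convex combination of extreme points rather than itself $s$-sparse, which is precisely why the whole argument is phrased through the inequality $\|\bm v\|_1\le\sqrt s\,\|\bm v\|_2$ valid on $\mathcal D$ instead of through exact sparsity of $\tilde{\bm x}^*-\bm x_t$.
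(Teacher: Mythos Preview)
Your proposal is correct and follows essentially the same route as the paper: apply the second-order (restricted-strong-convexity) lower bound at $\tilde{\bm x}^*$, split the right-hand side into $h(\bm x_t)$ plus the cross term $\langle\nabla f(\tilde{\bm x}^*),\tilde{\bm x}^*-\bm x_t\rangle$, bound the latter by $\sqrt{s}\,C\,\|\nabla f(\tilde{\bm x}^*)\|_\infty$ (up to a factor $2$), and then plug in the standard Gaussian concentration for $\|\bm A^\top\bm e\|_\infty$ together with the restricted-eigenvalue lower bound for $\mu$. The only cosmetic difference is that you bound $|\langle\nabla f(\tilde{\bm x}^*),\bm x_t\rangle|$ via the support function of $\mathcal D$ directly, whereas the paper writes $\bm x_t$ explicitly as a convex combination of the $s$-sparse atoms $\tilde{\bm v}_i$ and bounds each summand; both arguments yield the same $C\sqrt{s}\,\|\nabla f(\tilde{\bm x}^*)\|_\infty$.
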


\begin{proof}
Let $\bm x^*$ be the optimal solution of the graph-structured linear regression problem, that is,
\begin{equation}
\bm x^* \in \argmin{ \bm x \in \mathcal{D}(C,\mathbb{M})} f(\bm x) \triangleq \| \bm A \bm x - \bm y\|_2^2,
\end{equation}
where the measurements are obtained by $\bm y = \bm A \tilde{\bm x}^* + \bm e$ where each $a_{i j} \sim \mathcal{N}(0,1)/\sqrt{n}$ and $ e_i \sim \mathcal{N}(0,\sigma^2)$ for all $i \in [d]$. Notice that, let $\epsilon_t$ be the established bound of $h(\bm x_t)$ for \textsc{DMO-FW} and \textsc{DMO-AccFW}. By $\mu$-quadratic growth condition, we have
\[
f(\tilde{\bm x}^*) \leq f(\bm x_t) - \langle \nabla f(\tilde{\bm x}^*), \bm x_t - \tilde{\bm x}^*  \rangle - \frac{\mu}{2} \| \tilde{\bm x}^* - \bm x_t \|_2^2.
\]
That is, we have the parameter estimation error
\begin{align*}
\| \tilde{\bm x}^* - \bm x_t \|_2 &\leq \sqrt{\frac{2 (\langle \nabla f(\tilde{\bm x}^*), \tilde{\bm x}^* - \bm x_t \rangle + f(\bm x_t) - f(\tilde{\bm x}^*))}{\mu}} \\
&\leq \sqrt{\frac{2 (\langle \nabla f(\tilde{\bm x}^*), \tilde{\bm x}^* - \bm x_t \rangle + f(\bm x_t) - f({\bm x}^*)}{\mu}} \\
&\leq \sqrt{\frac{2 |\langle \nabla f(\tilde{\bm x}^*), \tilde{\bm x}^* - \bm x_t \rangle|}{\mu}} + \sqrt{\frac{2 h(\bm x_t)}{\mu}}.
\end{align*} 
Notice that $|\langle \nabla f(\tilde{\bm x}^*), \tilde{\bm x}^* \rangle | \leq \| \nabla f(\tilde{\bm x}^*)\|_\infty \cdot \|\tilde{\bm x}^*\|_1 \leq \sqrt{s} C \| \nabla f(\tilde{\bm x}^*)\|_\infty$. For the term $| \langle \nabla f(\tilde{\bm x}^*), \bm x_t|$, we have
\begin{align*}
| \langle \nabla f(\tilde{\bm x}^*), \bm x_t \rangle| &= C \sum_{i=0}^{t-1} \frac{2 (i+1)}{t(t+1)} \left|\left\langle \frac{ (( - \bm x_i + \nabla f(\bm x_i)/(L\eta_i))_{S_i}}{\| (- \bm x_i + \nabla f(\bm x_i))_{S_i}\|_2}, \nabla f(\tilde{\bm x}^*) \right\rangle\right| \\ 
&\leq C \sum_{i=0}^{t-1} \frac{2 (i+1)}{t(t+1)} \left\| \frac{ (( - \bm x_i + \nabla f(\bm x_i)/(L\eta_i))_{S_i}}{\| (- \bm x_i + \nabla f(\bm x_i))_{S_i}\|_2} \right\|_2 \cdot \|\nabla f(\tilde{\bm x}^*) \|_2 \\ 
&\leq C \sqrt{s} \|\nabla f(\tilde{\bm x}^*)\|_\infty \cdot \sum_{i=0}^{t-1} \frac{2 (i+1)}{t(t+1)} 1 \\
&= C \sqrt{s} \|\nabla f(\tilde{\bm x}^*)\|_\infty,
\end{align*}
where $s$ is the maximal allowed sparsity in $\mathbb{M}$, that is, $s = \max_{S\in \mathbb{M}} |S|$. Hence, we have
\[
\| \tilde{\bm x}^* - \bm x_t \|_2 \leq \sqrt{\frac{2 \sqrt{s}C \| \nabla f(\tilde{\bm x}^*)\|_\infty}{\mu}} + \sqrt{\frac{2 h(\bm x_t)}{\mu}}.
\]

The remaining part is to show $f$ with the associated sensing matrix $\bm A$ satisfies $\mu$-quadratic growth condition with high probability. Notice that $a_{i j } \sim \mathcal{N}(0,1/\sqrt{n})$ and $e_i \sim \mathcal{N}(0,\sigma^2)$ and we know that with a large probability $\nabla f(\tilde{\bm x}^*)_i = (\bm A^\top \bm e)_i \leq 2 \sigma \sqrt{\frac{\log d}{n}}$ (See Section 4 of \citet{jain2014iterative}). More specifically, $P(\| \nabla f(\tilde{\bm x}^*)\|_\infty > 2 \sigma \sqrt{\log d / n} )  = P(\sup_j \{\bm A^\top \bm e)_j\} > 2 \sigma \sqrt{ \log d / n}) \leq  (d e^{-\sigma^2 \log d / n} \sqrt{n}) / (\sigma \sqrt{\pi \log d})$ (See Equ. 3.1 of \citet{candes2007dantzig}). 

Furthermore, $f$ defined in graph-structured linear sensing problem satisfies the $\mu$-quadratic growth condition, i.e. $\mu \geq \frac{1}{2} - \frac{c_1 s\log d}{n}$ with probability at least $1 - e^{c_0 n}$ where $c_0$ and $c_1$ are two universal constants.
Therefore, we continue to have
\begin{align*}
\| \tilde{\bm x}^* - \bm x_t \|_2 &\leq \sqrt{\frac{2 \sqrt{s}C \| \nabla f(\tilde{\bm x}^*)\|_\infty}{\mu}} + \sqrt{\frac{2 h(\bm x_t)}{\mu}} \\
&\leq \left( \sqrt{\frac{4 \sigma C \sqrt{s \log d / n}}{\frac{1}{2}} - \frac{c s \log d}{n}} + \sqrt{\frac{2 h(\bm x_t)}{\frac{1}{2} - \frac{c s \log d}{n}}} \right),
\end{align*}
where $c=c_1$ and the last inequality is valid with high probability.
\end{proof} 

\section{Adversarial examples at $\bm x_t$}
\label{appendix:adv-examples}
\begin{figure}[H]
\centering
\includegraphics[width=.4\textwidth]{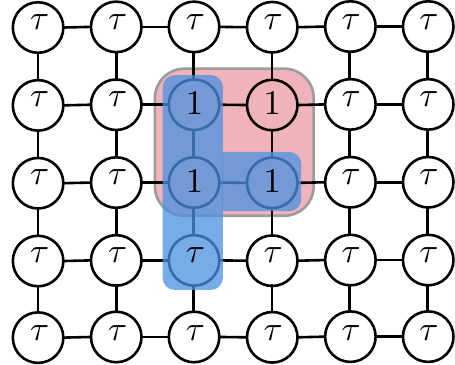}
\caption{A toy example of $S^*$ (4 nodes with 1 entry) with $s=4$ and $d=30$ where $\mathbb{G}[S^*]$ is the connected subgraph with up to 4 nodes (red region). Entries on nodes represent $\nabla f(\bm  x_t)$. This example of $\nabla f(\bm x_t)$ provides an optimal solution $ \langle \bm v_t, \nabla f(\bm x_t) \rangle = -\sqrt{4}$. The blue region is the best approximate solution with value $\langle \bar {\bm  v}_t, \nabla f(\bm x_t) \rangle = -\sqrt{3 + \tau^2}$.}
\label{fig:wrapfig}
\end{figure}

\paragraph{Adversarial example setup.} Consider the graph $\mathbb G$ illustrated in Figure \ref{fig:wrapfig}.  Suppose $f(\bm x) = \bm x^\top \bm x/2 - \bm x^\top \bm b$. Clearly, $f(\bm x)$ is $1$-strongly convex and $1$-smooth. Let $\bm b$ and $\bm x_t$ be such that $\nabla f(\bm x_t)^\top = [1, 1, 1, \ldots, 1, \tau, \tau, \ldots, \tau]^\top \in \mathbb{R}^{1\times d}$ where the first $s=4$ entries are 1 and rest $d-s= 30 - 4 = 26$ entries are $\tau$ with $0< \tau < 1$ (This is always possible if $t=0$ and one only needs to pick up such $\bm x_0$ given a predefined $\bm b$). Assume further that
$\mathbb{M} = \mathbb{M}(\mathbb{G}, s=4, g=1)$ the set of connected components of $\mathbb{G}$ each with at most $s=4$ nodes, and assume that nodes are numbered such that $S^*=\{1,2,3,4\}\in \mathbb{M}$. Then, the optimal solution is
\[
\bm v_t =\argmin{\bm v\in \mathcal D(1,\mathbb M)} \, \bm v^\top\nabla f(\bm x_t) =  \frac{1}{\sqrt{4}}(-1,-1,-1,-1,0,\ldots,0),
\]
where ${{\bm v}_t}^\top\nabla f(\bm x_t) = -\sqrt{4} = -2$.
However, the best non-optimal-support LMO will select $\bar {\bm  v}_t$ with  at least one $\tau$ entry, and thus 
$\bar {\bm  v}_t^\top\nabla f(\bm x_t) \leq \sqrt{3 + \tau^2}$.

\paragraph{Gap-additive adversarial example.} The key problem with the gap-additive assumption is the requirement for the gap error to decay. Recall $\bar{g}_t (\bm x_t) = \langle \nabla  f(\bm x_t), \bar{\bm v}_t - \bm x_t \rangle, g_t(\bm x_t) = \langle \nabla f(\bm x_t), \bm v_t - \bm x_t \rangle$ In particular, $g_t(\bm x_t) - \bar{g}_t({\bm x}_t) = \nabla f(\bm x_t)^\top(\bm v^*_t-\bar{\bm v}_t) = 2 -\sqrt{3 +\tau^2} > 0$, which is strictly positive and constant in $t$. Thus, any additive gap assumption with decaying tolerance will eventually require an exact LMO-support recovery.

\paragraph{Gap-multiplicative adversarial example.} We further consider adversarial examples for satisfying (\ref{inequ:approx-lmo-2}), continuing  the above example.
In this scenario, (\ref{inequ:approx-lmo-2}) requires 
\[
\underbrace{ (-\sqrt{3 + \tau^2} - \langle \bm x_t, \nabla f(\bm x_t)\rangle) }_{A}\leq \delta\underbrace{(-\sqrt{4} - \langle \bm x_t, \nabla f(\bm x_t)\rangle).}_{B}
\]
But for any $0< \tau < 1$, suppose that
\[
\bm x_t = \frac{1}{\sqrt{3 +\tau^2}} [-1,-1,-1, -\tau,0,\ldots,0]^\top
\Rightarrow \sqrt{3 +\tau^2} < -\langle \bm x_t, \nabla f(\bm x_t)\rangle < \sqrt{4}.
\]
To see the above, the norm of $\bm x_t$ is unit, i.e. $\|\bm x_t\|_2 = 1$ and $-\langle \bm x_t, \nabla f(\bm x_t)\rangle = \frac{3 + \tau}{\sqrt{3 + \tau^2}}$. Notice further that $\sqrt{3 +\tau^2} < -\langle \bm x_t, \nabla f(\bm x_t)\rangle = \frac{3 + \tau}{\sqrt{3 + \tau^2}} < \sqrt{4}$. Then  $A > 0$ but $B < 0$, and \emph{no positive value of $\delta$} can possibly satisfy (\ref{inequ:approx-lmo-2}); that is, the assumption is only satisfied if $\delta = 1$ and the LMO is exact, which is NP-hard.

\section{Experimental details of Fig. \ref{fig:least-square-k-support-norm}}
\label{appendix-k-support-norm}

\begin{algorithm}
\caption{Approximate ($\delta, \nabla f(\bm x_t), \mathcal{D}$)-DMO for $k$-support-norm ball.}
\begin{algorithmic}[1]
\STATE \textbf{Input}: approximation factor $\delta \in (0,1]$,  input vector $\nabla f(\bm x_t)$
\STATE $S^* = \arg\max_{S \in \mathbb{M}} \| \nabla f(\bm x_t)_{S}\|_2$
\IF{$\delta = 1$}
\STATE \textbf{return} $S^*$
\ENDIF
\STATE $\bar{S} = \arg\min_{S \in \mathbb{M}} \| \nabla f(\bm x_t)_{S}\|_2$
\FOR{$j \in S^*$}
\STATE randomly remove an element from $\bar{S}$
\STATE $\bar{S} = \bar{S} \cup {j}$
\IF{$\|\nabla f(\bm x_t)_{\bar{S}}\|_2 \geq \delta \|\nabla f(\bm x_t)_{S^*}\|_2$}
\STATE \textbf{return} $\bar{S}$
\ENDIF
\ENDFOR
\end{algorithmic}
\label{alg:k-support-norm-lmo-approx}
\end{algorithm}

In Fig. \ref{fig:least-square-k-support-norm}, we consider the following optimization problem
\[
\min_{\bm x \in \mathcal{D}} f(\bm x) := \| \bm A \bm x - \bm b \|_2^2,
\]
where $\mathcal{D}:= \{ \bm x: \operatorname{supp}(\bm x) \in \mathbb{M}, \|\bm x\|_2 \leq C\}$ with $C=1$ and $\mathbb{M} = \{ S\subseteq [d]: |S|\leq s=5\}$. The LMO operator for this norm ball can be calculated as the following
\begin{equation}
\bm v_t = \arg\min_{\bm x \in \|\bm x\|_2 \leq 1} \left\langle \nabla f(\bm x_t), \bm x \right\rangle = \frac{- \nabla f(\bm x_t)_{S^*}}{\|\nabla f(\bm x_t)_{S^*}\|_2}, \quad S^* \in \arg\max_{S \in \mathbb{M}} \| \nabla f(\bm x_t)_S\|_2. \label{equ:exact-operator}
\end{equation}

\begin{figure}[H]
\centering
\includegraphics[width=.95\textwidth]{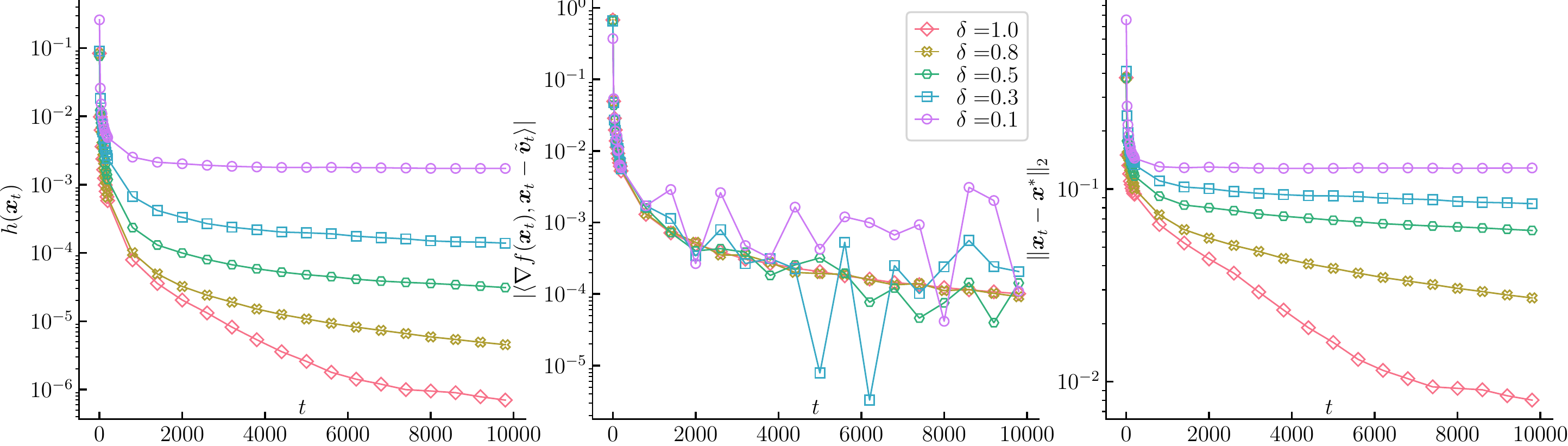}
\caption{\textit{Left}: The primal error $h(\bm x_t)$ as a function of $t$ for \textsc{DMO-FW-I} with different $\delta$. \textit{Middle}: The absolute value of $|\langle \nabla f(\bm x_t), \bm x_t - \tilde{\bm v}_t \rangle|$. \textit{Right}: The estimation error $\| \bm x_t - \bm x^*\|_2$ as a function of $t$ for \textsc{DMO-FW-I} with different $\delta$. The number of measurements $n=200$, i.e. $\bm A \in \mathbb{R}^{200\times d}$.}
\label{fig:dmo-fw-II-lasso}
\end{figure}

To illustrate our approximation bound, instead of using the above exact operator (\ref{equ:exact-operator}), we obtain an approximate DMO operator for the above optimization problem and present the $\delta$-approximate DMO in Alg. \ref{alg:k-support-norm-lmo-approx}. It returns an $S$ such that is at least $\delta$-approximation DMO operator oracle. The key step to control the quality of $S$ is Line 6 where $S$ is returned whenever $\|\nabla f(\bm x_t)_S\|_2 \geq \delta \|\nabla f(\bm x_t)_{S^*}\|_2$ and $S^*$ is the subset of maximal magnitudes of $\nabla f(\bm x_t)$. 

Our experimental setting is as follows: We use a normalized Gaussian sensing matrix where each entry $A_{i j}\sim \mathcal{N}(0,1/\sqrt{n})$. The number of samples $n=200$ and the dimensionality $d=500$. We fix sparsity $s=50$ for $\bm x^*$, i.e. $\| \bm x^*\|_0 = 50$ where each nonzero entry is either 1 or -1 with same probability. We obtain $\bm b =\bm A \bm x^*$ with $\bm x^* = \bm x^* / \|\bm x^*\|_2$. We also plot the duality gap $|\langle \nabla f(\bm x_t), \bm x_t - \tilde{\bm v}_t \rangle|$ and estimation error of $\bm x^*$, i.e., $\| \bm x^t - \bm x^*\|_2$. 

\section{More experimental details}
\label{appendix:section:experiments}

\subsection{Experimental setup}
\label{experimental-setup}

\paragraph{Parameters of all methods.}
All methods except for CoSAMP use the same approximation operator, i.e., head projection proposed in \cite{hegde2015nearly} (See details in Sec. \ref{appendix:section:dmo}). \textsc{CoSAMP} uses the $s$-sparse thresholding operator. \textssc{GraphCoSAMP} share the same parameter setting as \textsc{CoSAMP} but uses graph projection operator. \textssc{Gen-MP} has $L$-smooth parameter where we estimate $L$ by finding the largest eigenvalue of $\bm A^\top \bm A$. The step size of \textssc{Graph-IHT} is then set to $\eta_t = 1/L$. The step size of both \textssc{DMO-FW} and \textssc{DMO-AccFW} are set to $\eta_t = 2/(t+2)$ for all $t \geq 0$.

\paragraph{Datasets.} In our experiments, we use two datasets: 1) 10 MNIST images. We randomly select 10 MNIST images as our graph-structured signals $\bm x^* \in \mathbb{R}^{28\times 28}$ and normalized them into a unit vector; 2) Angio image. We also choose a sparse angio image from \cite{hegde2015nearly} where $\bm x^* \in \mathbb{R}^{100\times 100}$. These sparse images have 1 connected component.  We run all methods on a sever with 246GB memory and 80 cores. All methods are implemented in Python-3.8. The graph projection operator is implemented in C++11.

\paragraph{Graph-structured sparse recovery.} The goal of GS sparse recovery is to recovery a sparse image $\bm x^*$ with several small connected components as a prior. For example, in Angio image, we consider this $d=100\times 100$ sparse image where the true image $\bm x^*$ is shown in \ref{fig:sparse-image-recovery} (bottom left). The underlying sparsity pattern has $g=1$ connected components. We then normalize $\bm x^*$ such that $\|\bm x^*\|=1$. Measurements $\bm y$ are generated by $\bm y = \langle \bm A, \bm x^* \rangle + \bm e$ where $\bm e \sim \epsilon \cdot \mathcal{N}(\bm 0, \bm I_d)$ and $\epsilon$ controls the magnitude of noise $\bm e$. To summarize, the objective in our experiment is $\min_{\bm x \in \mathcal{D}} f(\bm x) = \frac{1}{2}\left\| \bm A \bm x - \bm y \right\|_2^2$, where $\mathcal{D} = \operatorname{conv}\left\{\bm x \in \mathbb{R}^d : \|\bm x \|_2 \leq 1, {\rm supp}({\bm x}) \in \mathbb{M} \right\}, \text{ and }$, $\mathbb{M} = \{F = S_1 \cup S_2 \cup \cdots \cup S_{11}: \text{$S_i$ are CCs of $\mathbb G$},\;  |F|\leq s \}$. $\bm A \in \mathbb{R}^{n\times d}$ is a Gaussian sensing matrix where each entry $a_{i j } \sim \mathcal{N}(0, 1/\sqrt{n})$ independently. We run each experiment for 20 trials.

\subsection{More results}

In the MNIST image recovery task, we set $n = 5 \cdot|\operatorname{supp}(\bm x^*)|$ where $\bm x^*$ is a specific normalized MNIST image. To compare \textsc{DMO-FW} with \textsc{DMO-AccFW} on all ten sparse MNIST images. The prime error $h(\bm x_t)$ as a function of time $t$ is illustrated in Fig. \ref{fig:sparse-image-loss}. These results indicate that \textsc{DMO-AccFW} is \textsc{DMO-FW} on all of these sparse images. Similarly, Fig. \ref{fig:sparse-image-estimation} presents the estimation errors $\|\bm x_t - \bm x^*\|_2$ over time $t$. 

\begin{figure}[H]
\centering
\includegraphics[width=.95\textwidth]{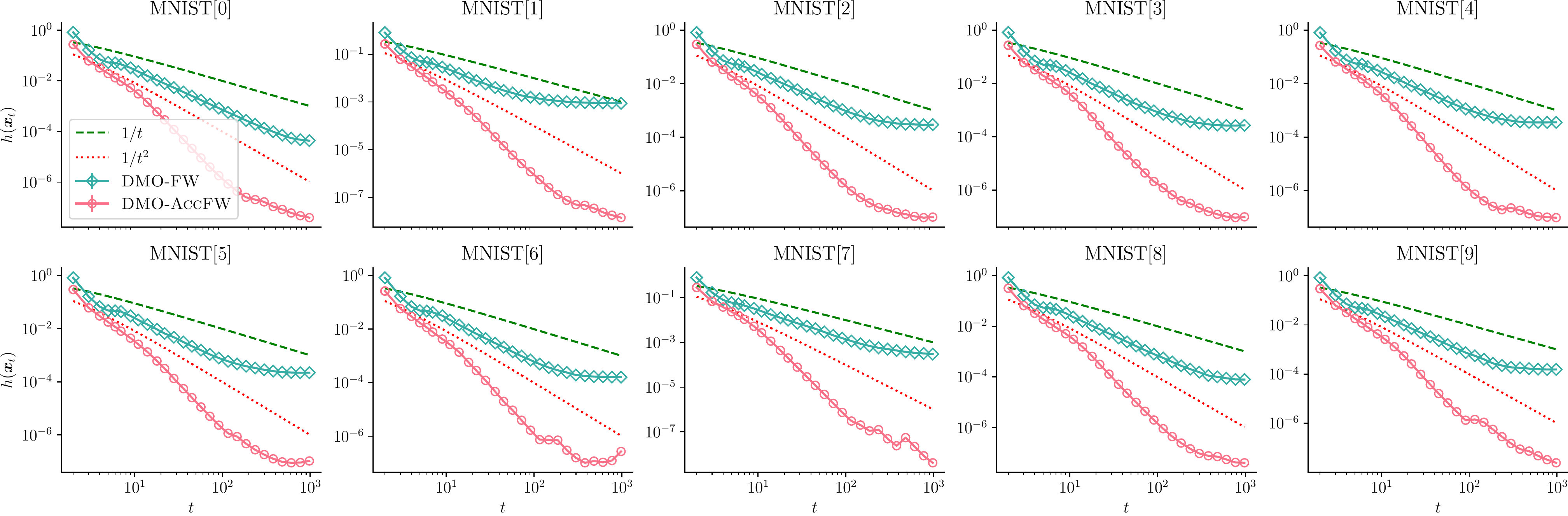}
\caption{Primal error $h({\bm x}_t)$ as a function of $t$ on task of the graph-structured sparse recovery of ten MNIST images.}
\label{fig:sparse-image-loss}
\end{figure}

\begin{figure}[H]
\centering
\includegraphics[width=1.\textwidth]{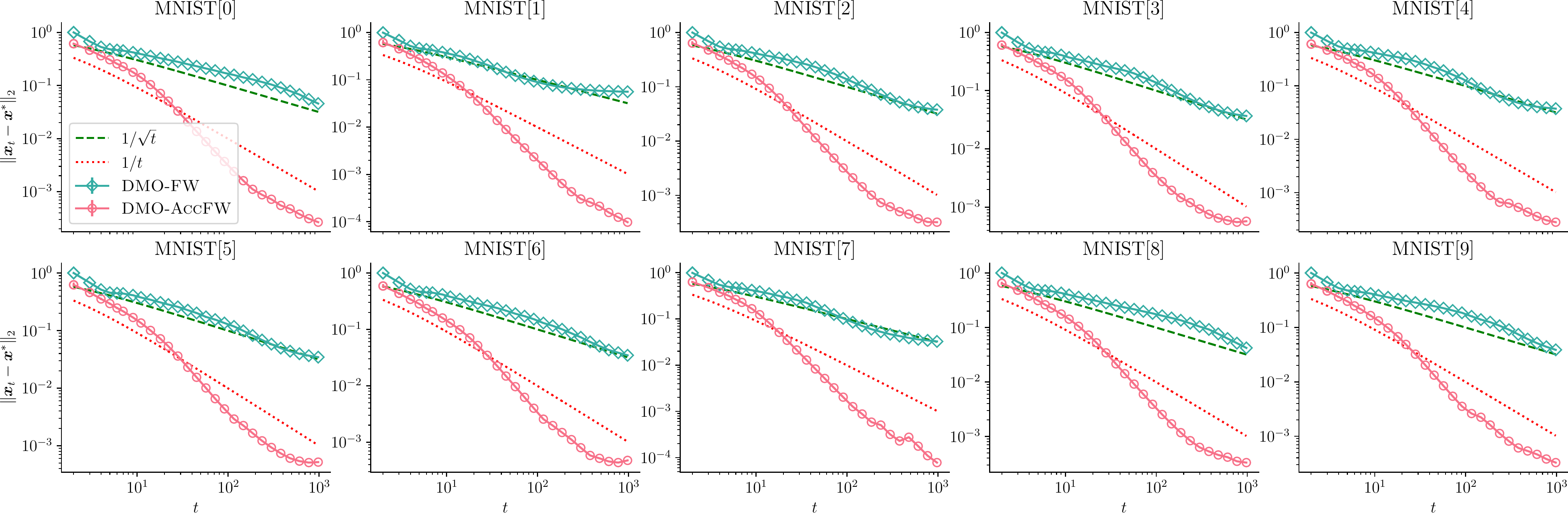}
\caption{Estimation error $\| \bm x_t - \bm x^*\|_2$ as a function of $t$ on task of the graph-structured sparse recovery of ten MNIST images.}
\label{fig:sparse-image-estimation}
\end{figure}

\begin{figure}[H]
\centering
\includegraphics[width=.8\textwidth]{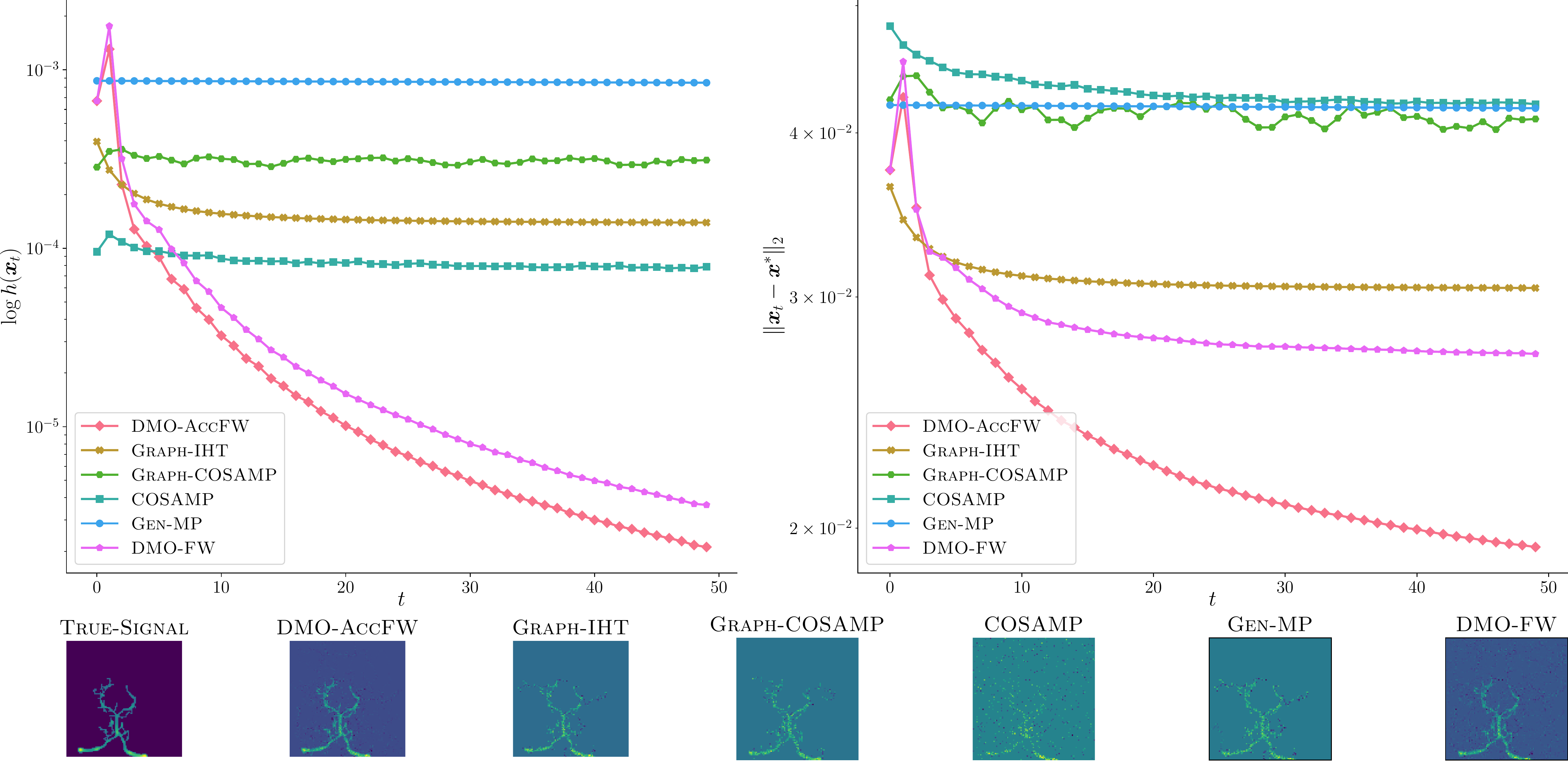}
\caption{The performance of methods on the task of graph-structured linear regression task. \textit{Top}: 
\textsc{DMO-AccFW} vs other baseline methods on primal error $ h(\bm x_t)$ (\textit{left}) and variable suboptimality $\|\bm x_t-\bm x^*\|_2$ (\textit{right}) as a function of $t$. 
\textit{Bottom}:  
Recovered sparse images $\bm x_t$ vs truth image (bottom right) after $50$ iterations. \vspace{-4mm}}
\label{fig:more-results-2}
\end{figure}

\paragraph{Performance of \textsc{DMO-FW}.} We include the results of \textsc{DMO-FW} where its performance is between \textsc{GraphIHT} and \textsc{DMO-AccFW}. One may notice that \textsc{GraphIHT} is stuck as a local minimum in Fig. \ref{fig:sparse-image-recovery} since it solves a nonconvex problem. It is a known issue with IHT methods, regardless of step size. The Fig. \ref{fig:more-results-2} on the right (gold line) demonstrates the same stalling behavior when $\eta_t = 1/(t + 1)$ is used. 

\begin{figure}[H]
\centering
\includegraphics[width=.95\textwidth]{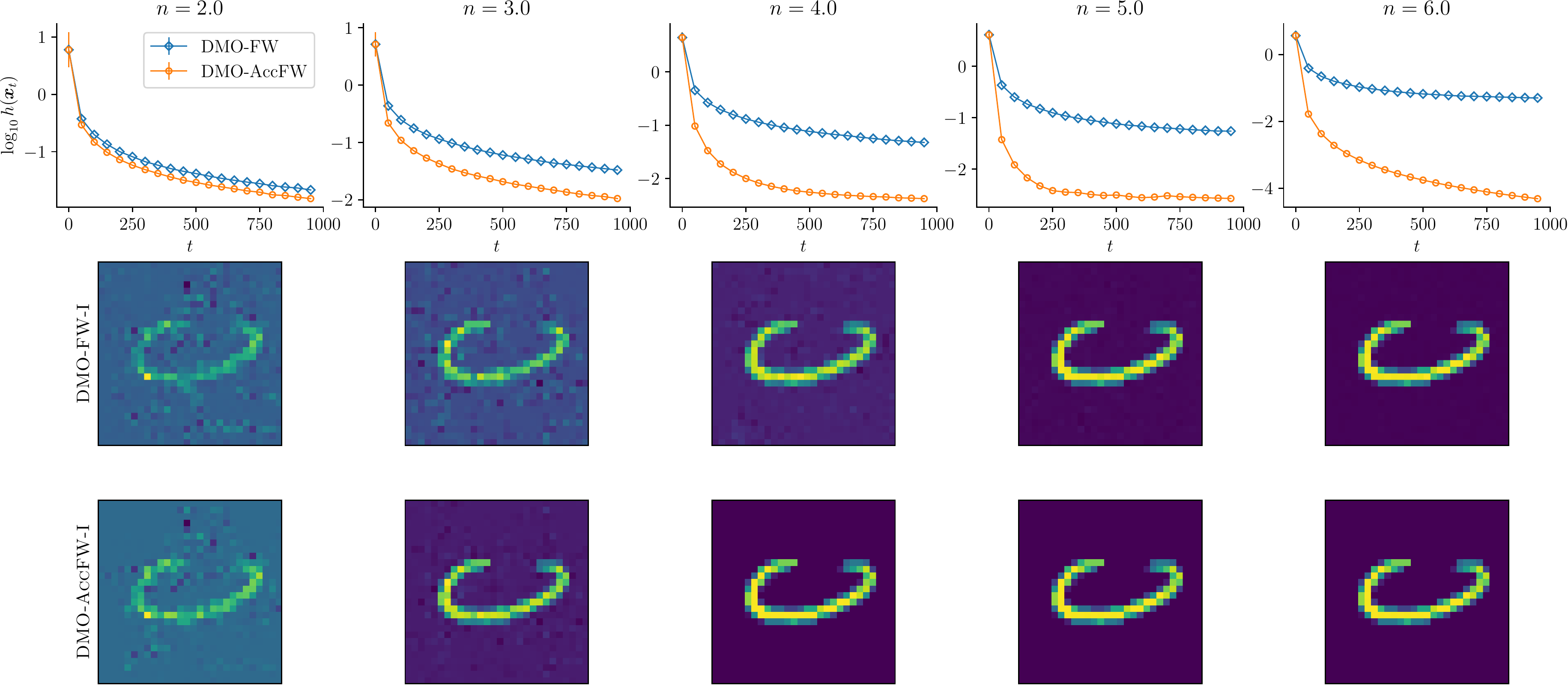}
\caption{The comparison of \textsc{DMO-FW-I} with \textsc{DMO-AccFW-I} for different sampling ratio.}
\label{fig:fw-dmo-vs-accfw-dmo-II-1000}
\end{figure}

Fig. \ref{fig:fw-dmo-vs-accfw-dmo-II-1000} showcases the learning process as a function of training ratio from $2.0$ to $6.0$.

\section{Dual maximization oracles}
\label{appendix:section:dmo}
\paragraph{DMO via a heuristic method.} A heuristic method with $\delta = \sqrt{1/\lceil s/g\rceil}$ for $\mathbb{M}(\mathbb{G},s,g)$. In Section \ref{section:dmo:inner-product}, we present a heuristic procedure that is for $\mathbb{M}(\mathbb{G},s,g)$. Algorithm \ref{algo:heuristic-dmo} presents this heuristic method, which has three main steps: \textbf{Step 1)} Let $I_g$ be the indices of $g$ largest magnitude $|z_i|$. Initialize a node set as $S = I_g$ (Line 2 and Line 3); \textbf{Step 2)} Next, iterate through the edges $(u,v) \in \mathbb{E}$, in any order. If $u\in S$, merge $v$ into $S$; similarly, if $v\in S$, merge $u\in S$. If at any point $|S| = s$, terminate;
\textbf{Step 3)} Repeat Step 2 until either no new edges are added, or $|S| = s$ (Line 10 to Line 24). This procedure finds a $\delta$-DMO for $\mathbb{M}(\mathbb{G},s,g)$ with $\delta = 1/\lceil s/g\rceil$, with runtime linear to the number of edges $\mathcal{O}(m)$. 

We prove that $S$ returned by Algorithm \ref{algo:heuristic-dmo} satisfies $\| \bm z_S\|_* \geq 1/\lceil s /g \rceil \max_{S^\prime\in \mathbb{M}}\|\bm z_{S^\prime}\|_*$:  First of all, $S$ is in $\mathbb{M}$ and notice that $\| \bm z_{S^\prime}\|_*^2 = \sum_{i \in I_g} |z_i|^2 + \sum_{j\in S^\prime\backslash I_g} |z_j|^2$, $\forall S^\prime \in \mathbb{M}$. As $S$ contains $g$ largest magnitudes of $\bm z$, we have $(\lceil s/g \rceil - 1) \sum_{i \in S} |z_i|^2 \geq \sum_{j\in S^\prime\backslash I_g} |z_j|^2$. This inequality provides $\lceil s/g \rceil \| \bm z_S\|_*^2 \geq \sum_{i \in I_g} |z_i|^2 + \sum_{j\in S^\prime\backslash I_g} |z_j|^2 = \|\bm z_{S^\prime}\|_2^2$. Hence, we have $\lceil s/g \rceil \| \bm z_S\|_*^2 \geq\max_{S^\prime \in \mathbb{M}} \|\bm z_{S^\prime} \|_*^2$. Taking square root of both sides will provide a better approximation guarantee, i.e. $\delta = 1/\lceil s/g \rceil \geq \delta^\prime = \sqrt{1/\lceil s/g \rceil}$. Clearly, the total run time is $\mathcal{O}(m)$ dominated by the for loop of Line 12.

\begin{algorithm}[H]
\caption{\textsc{A heuristic DMO with $\delta=\sqrt{1/\lceil s/g\rceil}$} approximation guarantee}
\begin{algorithmic}[1]
 \STATE \textbf{Input}: underlying graph $\mathbb{G}$, sparsity $k$, number of CCs $g$, input vector $\bm z$
 \STATE Sort entries of $\bm z$ by magnitudes such that $|z_{\tau_1}| \geq |z_{\tau_2}| \geq \ldots \geq |z_{\tau_g}|\geq |z_{\tau_{g+1}}|$ \algorithmiccomment{Notice that this step can be done in $\mathcal{O}(d)$ time by using Floyd-Rivest selection algorithm \citep{floyd1975algorithm}.}
\STATE $I_g = [\tau_1,\tau_2,\ldots,\tau_g], S = I_g$
\STATE $\bm c = \bm 0$ \algorithmiccomment{Initially, all nodes have same connected component ID}
\STATE $i=1$ // Tracking the ID of connected component
\FOR{$v \in S$}
\STATE $c_{v} = i$ // Node $v$ has a component ID $i$
\STATE $i = i +1$ 
\ENDFOR
\STATE $\mathbb{F}=\emptyset$ // Keep edges that are in $g$ components
\IF{$|S| = s$} 
\STATE \textbf{Return} $S$ // We assume $g \leq s$
\ENDIF
\FOR{$(u,v) \in \mathbb{E}$}
\IF{$c_{u} == 0$ and $c_{v} \ne 0$}
\STATE $S = S \cup \{u\}$
\STATE $\mathbb{F} = \mathbb{F} \cup (u,v)$
\STATE $c_u = c_v$ // $u$ is added to $c_v$-th component
\ENDIF
\IF{$|S| = s$}
\STATE \textbf{Return} $S$
\ENDIF
\IF{$c_{u} \ne 0$ and $c_{v} == 0$}
\STATE $S = S \cup \{v\}$
\STATE $\mathbb{F} = \mathbb{F} \cup (u,v)$
\STATE $c_v = c_u$ // $v$ is added to $c_u$-th component
\ENDIF
\IF{$|S| = s$}
\STATE \textbf{Return} $S$
\ENDIF
\ENDFOR
\end{algorithmic}
\label{algo:heuristic-dmo}
\end{algorithm}

\paragraph{DMO via the head projection operator.} \citet{hegde2015nearly} presents an algorithm for $\mathbb{M}(\mathbb{G},s,g)$ that has $\delta=\sqrt{1/14}$. We state a simplified version of it as the following: Consider $\mathbb{M}(\mathbb{G}, s, g)$-WGM and let ${\bf z}\in \mathbb{R}^d$. Then there is an algorithm that returns a support $S \subseteq [d]$ in $\mathbb{M}(\mathbb{G}, 2 s + g, g)$-WGM satisfying that $\| {\bf z}_S \|_2 \geq \delta \cdot \max_{S'\in \mathbb{M}} \| {\bf b}_{S'} \|_2$, where $\delta = \sqrt{1/14}$ and it runs in $\mathcal{O}(m \log^3 (d))$ where $m$ is the number of edges in $\mathbb{G}$.

The operators have a budget $B = s-g$. The budget value is 1 for edge cost in our experiments. In this case, the cost budget will never be violated since total costs in a $g$ forest are always not greater than $s-g$. The essential idea of this operator is a binary search over the Price-Collecting Steiner Forest problem \citep{hegde2014fast-pcst}. It then prunes over the final forest so that the returning $\mathbb{G}[S]$ is ``dense''. A C++ implementation of PCSF-GW is publicly available at \url{https://github.com/ludwigschmidt/cluster_approx}. In our experiments, we implement a C-version, which is marginally faster.
\subsection{Other graph-structured models}
{\renewcommand{\arraystretch}{1.5}

\begin{table}[H]
\centering
\caption{DMOs of different $\mathbb{M}$. DP is for Dynamic Programming.}
\begin{tabular}{p{0.28\textwidth}|p{0.3\textwidth}|p{0.12\textwidth}|p{0.15\textwidth}}
\toprule
$\mathbb{M}$ & DMO & Complexity & $\delta$-approx.\\\hline 
$\left\{H:H := \cup_{i=1}^s S_s\right\}$ where graph $\mathbb{G}$ is a tree and $S_k$ is a subtree  & Tree decomp. \citep{lim2017k} &  $\mathcal{O}(m s + d)$ & $\delta=1$ \\\hline
$\left\{ S: \mathbb{T}_S \text{ is a subtree. } |S|\leq s\right\}$ where $\mathbb{T}$ is a tree and $\mathbb{G}= \mathbb{T}$  & DP \cite{hochbaum1994node} & $\mathcal{O}(s^2 d)$ & $\delta=1$ \\\hline
$\mathbb{M}(\mathbb{G},s,g)$  & Algorithm \ref{algo:heuristic-dmo} & $\mathcal{O}(m)$ & $\delta = \sqrt{1/\lceil s/g\rceil}$ \\\hline
$\mathbb{M}(\mathbb{G},s,g)$  & Head Proj. \citep{hegde2015nearly} & $\mathcal{O}(m \log^3 d)$ &  $\delta=\sqrt{1/14}$ \\\bottomrule
\end{tabular}
\label{tab:popoular-model-m}
\end{table}}

\paragraph{Other operators and applications.} We list GS models in Table \ref{tab:popoular-model-m} with time complexities and approximation guarantees. These operators consider connectivity constraints, a key property or requirement of subgraph detection. Connectivity and subgraph detection have been explored recently \citep{arias2011detection,qian2014connected,hegde2015approximation,aksoylar2017connected}. For example, if we assume $\mathbb{M} = \{S: |S|\leq s, \mathbb{G}[S] \text{ is connected.}\}$, DMO operator can be reformulated as $s$-maximum-weight subgraph problem, which has been considered in \cite{hochbaum1994node}. This algorithm has been applied to identify subnetwork markers in protein-protein interaction (PPI) network \citep{dao2011optimally} and automatic planning \citep{riabov2006scalable}.

\subsection{Comparison of Convergence rate}
\label{appendix:tab:convergence-rate}
This subsection summarizes and compares the convergences rate of different method as presented in Table \ref{tab:convergence-rate}.

\renewcommand{\arraystretch}{1.4}
\begin{table*}
\caption{Comparison of convergence rates of  FW-type methods with ours when $\mathcal D$ is a GS support set. In all cases, we assume the diameter of $\mathcal D$ is $D := \max_{\bm x, \bm y\in \mathcal{D}} \| \bm x - \bm y\|$ and $f$ is convex differentiable. The column operator availability checks whether an approximate LMO/DMO-operator is efficiently obtainable for GSCOs. In our problem $D = 4 C^2$.}
\centering
\begin{tabular}{p{0.18\textwidth}|p{0.1\textwidth}|p{0.08\textwidth}|p{0.33\textwidth}|p{0.15\textwidth}}
\toprule
\centering
 Algorithm & Operator Availability & Solution & Condition & Convergence rate \\\hline 
Inexact gap-additive \citep{jaggi2013revisiting} & \textcolor{red}{\centering \xmark} & $\bm x_t \in \mathcal{D}$ & $L$-smooth & $\frac{2 L D^2 (1+\delta)}{ (t+2)}$  \\\hline 
 Inexact gap-mult. \citep{pedregosa2020linearly} & \textcolor{red}{\centering \xmark} & $\bm x_t \in \mathcal{D}$ & $L$-smooth &  $\frac{2(L D^2 + B D \delta)}{(\delta^2 t + 2 \delta)}$ \\\hline 
\textsc{DMO-FW-I} & \textcolor{ForestGreen}{\centering \cmark} & $\bm x_t \in \mathcal{D}$ & $L$-smooth, $\|\nabla f(\bm x)\|_\infty\leq B/t^\nu$ & $\mathcal{O}\left( \frac{B C \sqrt{s}}{t^\nu} \right)$ \\\hline 
\textsc{DMO-FW-II} & \textcolor{ForestGreen}{\centering \cmark} & $\bm x_t \in \frac{\mathcal{D}}{\delta}$ & $L$-smooth & $\frac{2 L D^2}{\delta^2 (t+2)}$ \\\hline
\textsc{DMO-AccFW-I} & \textcolor{red}{\centering \xmark} & $\bm x_t \in \mathcal{D}$ & $L$-smooth, $\mu$-quadratic, $\|\bm x^*\|_2 = C$ & $\frac{4 e^{4L /\mu} h(\bm x_0)}{(t+2)^2}$ \\\hline
\textsc{DMO-AccFW-I} & \textcolor{ForestGreen}{\centering \cmark} & $\bm x_t \in \mathcal{D}$ & $L$-smooth, $\|\nabla f(\bm x)\|_\infty \leq B/t^\nu$ & $\mathcal{O}\left( \frac{B C \sqrt{s}}{t^\nu} \right)$ \\\hline
\textsc{DMO-AccFW-II} & \textcolor{ForestGreen}{\centering \cmark} & $\bm x_t \in \frac{\mathcal{D}}{\delta}$ & $L$-smooth, $\mu$-quadratic growth & $\mathcal{O}\left( \frac{L^2 (C^2/\delta^2 - D_*^2) }{\mu(t+2)} \right)$ \\\bottomrule
\end{tabular}
\label{tab:convergence-rate}
\end{table*}

\end{document}